\theoremstyle{plain}
\newtheorem{theorem}[equation]{Theorem}
\newtheorem{proposition}[equation]{Proposition}
\newtheorem{lemma}[equation]{Lemma}
\newtheorem{corollary}[equation]{Corollary}
\newtheorem{definition}[equation]{Definition}
\theoremstyle{definition}
\theoremstyle{remark}
\newtheorem{remark}[equation]{Remark}
\newtheorem{example}[equation]{Example}
\numberwithin{equation}{section}
\newcommand{\sjump}{\hskip .2 cm}
\newcommand{\im}{\operatorname{Im}}
\newcommand{\re}{\operatorname{Re}}
\newcommand{\Hcomplex}{H^{\scriptscriptstyle\mathbb{C}}}
\newcommand{\Hc}[3]{H^{\scriptscriptstyle\mathbb{C}}_{#1}({#2},{#3})}
\newcommand{\Hr}[3]{Q^{\scriptscriptstyle\mathbb{R}}_{#1}({#2},{#3})}
\newcommand{\Qrsymbol}[1]{Q^{\scriptscriptstyle\mathbb{R}}_{#1}}
\newcommand{\Qrsymbolcal}[1]{\mathcal{Q}^{\scriptscriptstyle\mathbb{R}}_{#1}}
\newcommand{\Qc}[3]{Q^{\scriptscriptstyle\mathbb{C}}_{#1}({#2},{#3})}
\newcommand{\Hcsymbol}[1]{H^{\scriptscriptstyle\mathbb{C}}_{#1}}
\newcommand{\Hcsymbolcal}[1]{\mathcal{H}^{\scriptscriptstyle\mathbb{C}}_{#1}}
\newcommand{\Tr}[2]{T_{#1}({#2})}
\newcommand{\Tc}[2]{T_{#1}({#2})^{\scriptscriptstyle\mathbb{C}}}
\newcommand{\Thol}[2]{T_{#1}({#2})^{1,0}}
\newcommand{\Tahol}[2]{T_{#1}({#2})^{0,1}}
\newcommand{\Vr}[2]{\mathcal{V}_{#1}({#2})}
\newcommand{\Vc}[2]{\mathcal{V}_{#1}({#2})^{\scriptscriptstyle\mathbb{C}}}
\newcommand{\Vhol}[2]{\mathcal{V}_{#1}({#2})^{1,0}}
\newcommand{\Vahol}[2]{\mathcal{V}_{#1}({#2})^{0,1}}
\newcommand{\Tastr}[2]{T^\ast_{#1}({#2})}
\newcommand{\Tastc}[2]{T^\ast_{#1}({#2})_{\scriptscriptstyle\mathbb{C}}}
\newcommand{\Tasthol}[2]{T^\ast_{#1}({#2})_{1,0}}
\newcommand{\Tastahol}[2]{T^\ast_{#1}({#2})_{0,1}}
\newcommand{\Omegar}[2]{\Omega^1_{#1}({#2})}
\newcommand{\Omegac}[2]{\Omega^1_{#1}({#2})_{\scriptscriptstyle\mathbb{C}}}
\newcommand{\Omegahol}[2]{\Omega^1_{#1}({#2})_{1,0}}
\newcommand{\Omegaahol}[2]{\Omega^1_{#1}({#2})_{0,1}}
\newcommand{\coloneqq}{\mathrel{\mathop:}=}
\newcommand{\eqqcolon}{=\mathrel{\mathop:}}
\DeclareMathOperator{\RE}{Re}
\DeclareMathOperator{\IM}{Im}
\DeclareMathOperator{\GRAD}{grad}
\DeclareMathOperator{\TRACE}{tr}
\providecommand{\abs}[1]{\lvert#1\rvert}
\providecommand{\ang}[1]{\langle#1\rangle}
\providecommand{\zbar}{{\bar{z}}}
\providecommand{\wbar}{{\bar{w}}}
\newcommand{\N}{\mathbb{N}}
\newcommand{\R}{\mathbb{R}}
\newcommand{\C}{\mathbb{C}}
\newcommand{\D}{\mathbb{D}}
\begin{document}

\title[Plurisubharmonic defining functions]{On plurisubharmonic defining functions for pseudoconvex domains in $\mathbb{C}^2$}
\author{Anne-Katrin Gallagher}
\address{Gallagher Tool and Instrument LLC, Redmond, WA 98052, USA}
\email{anne.g@gallagherti.com}
\author{Tobias Harz}
\address{Bergische Universitat Wuppertal, Wuppertal, Germany}
\email{harz@math.uni-wuppertal.de}
\date{\today}

\subjclass[2020]{32Txx, 32U05, 32T25}
\keywords{Plurisubharmonic defining functions, finite type}
\begin{abstract}
We investigate the question of existence of  plurisubharmonic defining functions for smoothly bounded, 
pseudoconvex domains in $\mathbb{C}^2$. In particular, we construct a family of simple counterexamples  
to the existence of plurisubharmonic smooth local defining functions. Moreover, we give general criteria 
equivalent to the existence of plurisubharmonic smooth defining functions on or near the boundary of the domain. 
These equivalent characterizations are then explored for some classes of domains.
\end{abstract}
\maketitle

\section{Introduction}

In this paper, we investigate the question of existence of plurisubharmonic smooth defining functions for pseudoconvex domains with smooth boundary. This basic question has been long resolved for strictly pseudoconvex domains. However, there is a great lack of understanding in the case of weakly pseudoconvex domains, although the existence of plurisubharmonic defining functions  is of relevance, e.g.,  for the classification problem of domains in $\mathbb{C}^n$.

It is a basic fact that a smoothly bounded domain in $\C^n$ has a plurisubharmonic smooth local defining function 
near a boundary point if the domain is strictly pseudoconvex or convex near that point; similarly, the domain 
admits a smooth plurisubharmonic defining function near the boundary if it is strictly pseudoconvex or convex 
at each boundary point. 
No other geometric conditions which are sufficient for the existence of plurisubharmonic smooth (local) defining functions
are known.
Moreover, the existence of local or global plurisubharmonic defining functions may fail on pseudoconvex domains with
weakly pseudoconvex boundary points.
For instance, the worm domain, constructed by Diederich--Forn{\ae}ss in \cite{DieFor77-1}, does not admit 
a plurisubharmonic global defining function, although it does admit plurisubharmonic local defining functions near each
boundary point. 
Further,  Forn{\ae}ss \cite{Fornaess79} constructs a smoothly bounded, pseudoconvex domain in $\mathbb{C}^3$ for which all $\mathcal{C}^2$-smooth local defining functions fail to 
be plurisubharmonic on the boundary near some 
boundary point. Behrens \cite{Behrens84} gives an example of a pseudoconvex domain in $\mathbb{C}^2$ with real-analytic boundary which exhibits the same failure of plurisubharmonicity of $\mathcal{C}^6$-smooth local defining 
functions. Behrens' domain is of type $6$ at the boundary point in question. 
No such examples are known for pseudoconvex domains which are of type $4$ at the considered boundary point.

In the first part of this paper, we construct a family of counterexamples in the spirit of Behrens' example in \cite{Behrens84}. Namely, we construct domains $\Omega_{2k}\subset\mathbb{C}^2$, $k\geq3$, such that  $\Omega_{2k}$
is a domain with real-analytic boundary that is pseudoconvex and of type $2k$ at some boundary point $p$ but any $\mathcal{C}^2$-smooth defining function of $\Omega$ near $p$ fails to be plurisubharmonic on the boundary. 

The second part of the paper is concerned with introducing new geometric conditions that are sufficient for the existence of plurisubharmonic smooth (local) defining functions. We first give equivalent, yet non-geometric, characterizations for the existence of plurisubharmonic smooth local defining functions, both on and near the boundary, see \Cref{L:pshonboundaryiff} and \Cref{L:pshnearboundaryiff}. These characterizations are then exploited to show the existence of plurisubharmonic defining functions under each of two newly introduced geometric conditions for smoothly bounded pseudoconvex domains.

The first condition pertains to type $4$ boundary points $p$ of pseudconvex domains $\Omega\subset\mathbb{C}^2$ with smooth boundary. We first show that the Levi form $\lambda$ of $\Omega$ near $p$ satisfies
\begin{align}\label{E:Classinequality}
  (L\bar{L}\lambda)(p)\geq |(LL\lambda)(p)|
\end{align}
at points $p\in b\Omega$ where $\Omega$ is weakly pseudoconvex, for all holomorphic tangential vector fields $L$ near $p$, see Theorem \ref{T:basictype4estimate}. 
Note that \eqref{E:Classinequality} holds trivially at all boundary points at which $\Omega$ is of type larger than $4$. Through \eqref{E:Classinequality}, we may then classify type $4$ boundary points of $\Omega$ into two groups as follows. We say that $\Omega$ is of \emph{strict type $4$} at $p$ if
inequality \eqref{E:Classinequality} is strict, otherwise, call $\Omega$ of \emph{weak type $4$} at $p$. This classification is easily seen to be invariant under biholomorphic coordinate changes, see Lemma \ref{thm:strictestimatetransform}.
We then give a description of pseudoconvex domains near strict type $4$ boundary points in suitable local holomorphic coordinates in Proposition \ref{thm:stricttype4incoordinates}. In this form, the notion of strict type $4$ boundary points has appeared, albeit implicitly, in the literature. Namely, Kol\'a\v{r} showed in \cite{Kolar99}
that a pseudoconvex domain with real-analytic boundary is convexifiable near any strict type $4$ boundary point. He also states in \cite{Kolar99} that this result does not hold if the real-analyticity assumption is replaced by mere $\mathcal{C}^\infty$-smoothness of the boundary. A proof of this statement is not provided in \cite{Kolar99}.

We show that if a smoothly bounded, pseudoconvex domain $\Omega\subset\mathbb{C}^2$ is of strict type $4$ at some boundary point $p$, then there exists a  smooth local defining function for $\Omega$ near $p$ which is plurisubharmonic on $b\Omega$, see Theorem \ref{T:type4pshonboundary}. This local defining function is constructed explicitly by solving the following first order partial differential equation on $b\Omega$ near $p$. For a given, smooth, complex-valued function $F$ near $p$, find a smooth, real-valued function $h$ near $p$ such that
\begin{align*}
  Lh=F +\mathcal{O}(\sqrt{\lambda})\;\;\text{ on }b\Omega\text{  near }p.
\end{align*}

We also prove that if $\Omega\subset\mathbb{C}^2$ admits a smooth defining function which is plurisubharmonic on $b\Omega$ near a type $4$ boundary point $p$, then $\Omega$ admits a plurisubharmonic smooth defining function, see Theorem \ref{T:pshnearboundary}. To construct this defining function, we solve the following system of partial differential equations on $b\Omega$ near $p$. For a given, smooth, real-valued function $F$ near $p$, find a smooth, real-valued function $h$ near $p$ such that
\[
\begin{cases}
    &Lh=\mathcal{O}(\sqrt{\lambda})\\
    &L\bar{L}h=F+\mathcal{O}(\sqrt{\lambda})
\end{cases}
\;\;\text{ on }b\Omega\text{  near }p.
\]
This method of proof fails for higher order boundary points. However, parts of our analysis can be salvaged to give a simplified, short proof of the fact that the Diederich--Forn{\ae}ss index and Steinness index of $\Omega\subset\mathbb{C}^2$ are $1$ if $\Omega$ admits a smooth defining function that is plurisubharmonic on $b\Omega$, but does not have type $4$ boundary points, see Corollary \ref{C:DFforhighertype}.

For the second application of our general characterizations of the existence of plurisubharmonic defining functions, see Propositions \ref{L:pshonboundaryiff} and \ref{L:pshnearboundaryiff}, we introduce the notion of sesquiconvexity, a new geometric condition for an open set in $\C^2$ which may be formulated independently of the choice of a defining function. Sesquiconvexity is sufficient for the existence of (local) defining functions that are plurisubharmonic on the boundary
(and inside the domain), 
see \Cref{P:sesquiconvexpsh} and \Cref{T:sesquiconvexpshglobal}, however, it is not a necessary condition, see \Cref{E:examplecont}.

%
%
%
\section{Preliminaries}\label{sec:preliminaries}

In this section, we detail our notations and list some known facts for later reference. 
We note that \Cref{sec:counterexample} requires almost no prerequisites. As such, readers familiar with basic notions from several complex variables may delay looking through the current section until the study of \Cref{sec:stricttype4} and onward.

The generic term \enquote{smooth} always means $\mathcal{C}^\infty$-smooth. Domains and functions of finite smoothness classes will only be considered in \Cref{sec:counterexample}.
\subsection{Some basic notions from almost complex geometry}
Let $(M, J)$ be an almost complex manifold. For every $p \in M$, write $\Tr{p}{M}$ for the real tangent space to $M$ at $p$. As usual, the complexification $\Tc{p}{M} \coloneqq T_p(M) \otimes \C$ decomposes as $\Tc{p}{M} = \Thol{p}{M} \oplus \Tahol{p}{M}$ into the $+i$ and $-i$ eigenspaces of $J_p$.
Smooth sections over $M$ in the corresponding bundles $\Tr{}{M}$, $\Tc{}{M}$, $\Thol{}{M}$, $\Tahol{}{M}$ are denoted by $\Vr{}{M}$, $\Vc{}{M}$, $\Vhol{}{M}$, $\Vahol{}{M}$, and are called, real, complexified, holomorphic and antiholomorphic vector fields on $M$, respectively. 

Let $\Tastr{p}{M}$ denote the real cotangent space of $M$ at $p$, and let $J^\ast_p \colon \Tastr{p}{M} \to \Tastr{p}{M}$ be the dual almost complex structure. 
The complexification $\Tastc{p}{M} \coloneqq \Tastr{p}{M} \otimes \C$ decomposes as $\Tastc{p}{M} = \Tasthol{p}{M} \oplus \Tastahol{p}{M}$ into the $+i$ and $-i$ eigenspaces of $J^\ast_p$.
Smooth sections over $M$ in the corresponding bundles $\Tastr{}{M}$, $\Tastc{}{M}$, $\Tasthol{}{M}$, $\Tastahol{}{M}$ are denoted by $\Omegar{}{M}$, $\Omegac{}{M}$, $\Omegahol{}{M}$, $\Omegaahol{}{M}$, and are called, real, complexified, holomorphic and antiholomorphic 1-forms on $M$, respectively.

For $\alpha \in \Omegar{}{M}$, write $\alpha_{\scriptscriptstyle\C}$ to denote the pointwise $\C$-linear extension of $\alpha$ to $\Omegac{}{M}$. Then $\alpha_{1,0} \coloneqq \alpha_{\scriptscriptstyle\C}-i(J^\ast\alpha)_{\scriptscriptstyle\C}$ and $\alpha_{0,1} \coloneqq \alpha_{\scriptscriptstyle\C}+i(J^\ast\alpha)_{\scriptscriptstyle\C}$ define elements in $\Omegahol{}{M}$ and $\Omegaahol{}{M}$, respectively.
If $f \colon M \to \C$ is a smooth function, write $df \in \Omegar{}{M}$ to denote the differential of $f$, and set $\partial f \coloneqq \textstyle\frac{1}{2}(df)_{1,0}$ and $\bar\partial f \coloneqq \textstyle\frac{1}{2}(df)_{0,1}$.
Clearly, $(df)_{\scriptscriptstyle\C} = \partial f + \bar\partial f$ holds.

If $M$ is an open subset in $\C^n$ with coordinates $(z^1, \ldots, z^n)$, $z^j =x^j+iy^j$, $x^j,y^j \in \R$, and if $J$ is the standard almost complex structure, then
\begin{align*}
\partial f &= \sum_{j=1}^n \frac{\partial f}{\partial z^j} dz^j, &
\bar\partial f &= \sum_{j=1}^n \frac{\partial f}{\partial \bar{z}^j} d\bar{z}^j,
\end{align*}
where 
\begin{align*}
\frac{\partial}{\partial z^j} &= \frac{1}{2}\left(\frac{\partial}{\partial x^j} - i\frac{\partial}{\partial y^j}\right),  &
\frac{\partial}{\partial \bar{z}^j} &= \frac{1}{2}\left(\frac{\partial}{\partial x^j} + i\frac{\partial}{\partial y^j}\right),\\
\intertext{and}
dz^j &= dx^j + idy^j, & d\bar{z}^j &= dx^j - idy^j.
\end{align*}
Here, and in what follows, we always suppress the index $\C$ and use the same symbol to denote the vector field $V \in \Vr{}{M}$ and its complexification $V^{\scriptscriptstyle \C} \coloneqq V \otimes 1 \in \Vc{}{M}$, and the 1-form $\alpha \in \Omegar{}{M}$ and its complexification $\alpha_{\scriptscriptstyle \C} \in \Omegac{ }{M}$, respectively.
Euclidean inner products on $\Vr{}{M}$ and $\Omegar{}{M}$ are introduced by declaring that $(\frac{\partial}{\partial x^1}, \frac{\partial}{\partial y^1}, \ldots, \frac{\partial}{\partial x^n}, \frac{\partial}{\partial y^n})$ and $(dx^1, dy^1, \ldots, dx^n, dy^n)$ are orthonormal bases for $\Vr{}{M}$ and $\Omegar{}{M}$, respectively. Similarly, Hermitian inner products on $\Vc{}{M}$ and $\Omegac{}{M}$ are introduced by declaring that $(\frac{\partial}{\partial z^1}, \ldots, \frac{\partial}{\partial z^n}, \frac{\partial}{\partial \bar{z}^1}, \ldots, \frac{\partial}{\partial \bar{z}^n})$ is an orthogonal basis for $\Vc{}{M}$ with vectors of constant length $1/\sqrt{2}$, and $(dz^1, \ldots, dz^n,$ $d\bar{z}^1, \ldots, d\bar{z}^n)$ is an orthogonal basis for $\Omegac{}{M}$ with vectors of constant length $\sqrt{2}$. 
Note that the canonical inclusions $\Vr{}{M} \hookrightarrow \Vc{}{M}$, $V \mapsto V^{\scriptscriptstyle \C}$, and $\Omegar{}{M} \hookrightarrow \Omegac{}{M}$, $\alpha \mapsto \alpha_{\scriptscriptstyle \C}$, are isometries, so the notations $\abs{V}$ and $\abs{\alpha}$ for the corresponding norms are well-defined, even if the index $\C$ is suppressed in the notation. 

\subsection{Some basic notions from differential geometry}
Let $M$ be a smooth manifold. For every tensor field $\gamma$ on $M$ and every $p \in M$, let $\gamma_p = \gamma(p)$ denote the value of $\gamma$ at $p$. If $\alpha \in \Omegar{}{M}$ and $V \in \Vr{}{M}$, then $\ang{\alpha,V} \coloneqq \alpha(V)$.
Given smooth vector fields $V_1, \ldots, V_k$ on $M$ and a smooth function $f \colon M \to \C$, we write $V_k \ldots V_1f \coloneqq V_k( \ldots (V_1f)\ldots)$. Moreover, $[V_1,V_2]$ denotes the Lie bracket of $V_1$ and $V_2$, i.e., $[V_1,V_2]f = V_1V_2f - V_2V_1f$.

Let $M \subset \R^N$ be open and let $(t^1, \ldots, t^N)$ be the standard Euclidean coordinates on $M$. For $V,W \in \Vr{}{M}$, $V = \sum_{\nu=1}^N V^\nu\frac{\partial}{\partial t^\nu}$, $W = \sum_{\nu=1}^N W^\nu\frac{\partial}{\partial t^\nu}$, we set
$$
\nabla_VW = \sum_{\nu=1}^N \bigg(\sum_{\mu=1}^N V^\mu\frac{\partial W^\nu}{\partial t^\mu}\bigg) \frac{\partial}{\partial t^\nu}.
$$
Note that $\nabla_VW$ is precisely the covariant derivative of $W$ along $V$ with respect to the Levi-Civita connection in $\Tr{}{M}$ corresponding to the standard Riemannian metric $g = \sum_{\nu = 1}^N dt^\nu \otimes dt^\nu$ on $\Tr{}{M}$. If for $f \in \mathcal{C}^\infty(M,\R)$ we write
$$
\Hr{f}{V}{W} = \sum_{\nu,\mu = 1}^N \frac{\partial^2 f}{\partial t^\nu \partial t^\mu} V^\nu W^\mu,
$$
then
\begin{align} \label{equ:XYf}
VWf = \Hr{f}{V}{W} + (\nabla_VW)f.
\end{align}
Let $I \subset \R$ be an open interval, and let $\gamma \colon I \to M$ be a smooth curve. Then $\dot{\gamma}, \ddot{\gamma} \colon I \to \Tr{}{M}$ denote the vector fields along $\gamma$ given by $\dot{\gamma}(\tau) \coloneqq \sum_{\nu=1}^N \gamma_\nu'(\tau)(\frac{\partial}{\partial t_j})_{\gamma(\tau)}$ and  $\ddot{\gamma}(t) \coloneqq \sum_{\nu=1}^N \gamma_\nu''(\tau)(\frac{\partial}{\partial t_j})_{\gamma(\tau)}$. Observe that $\ddot{\gamma}$ is the covariant derivative of $\dot{\gamma}$ with respect to $\nabla$.

Let $M \subset \C^n$ be open and let $(z^1, \ldots, z^n)$ be the standard Euclidean coordinates on $M$. For $V,W \in \Vhol{}{M}$, $V = \sum_{j=1}^n V^j\frac{\partial}{\partial z^j}$, $W = \sum_{j=1}^n W^j\frac{\partial}{\partial z^j}$, we set
\begin{equation*}\begin{split}
\nabla_VW = \sum_{j=1}^n \bigg(\sum_{k=1}^n V^k\frac{\partial W^j}{\partial z^k}\bigg) \frac{\partial}{\partial z^j}, \quad \nabla_{\bar{V}}\bar{W} \coloneqq \overline{\nabla_VW},  \\
\nabla_{\bar{V}}W = \sum_{j=1}^n \bigg(\sum_{k=1}^n \bar{V}^k\frac{\partial W^j}{\partial \zbar^k}\bigg) \frac{\partial}{\partial z^j},\quad \nabla_V\bar{W} \coloneqq \overline{\nabla_{\bar{V}}W}.
\end{split}\end{equation*}
Note that $\nabla_VW$ and $\nabla_{\bar{V}}W$ are precisely the covariant derivatives of $W$ along $V$ and $\bar{V}$ with respect to the Chern connection in $\Thol{}{M}$ corresponding to the standard Hermitian metric $h = \sum_{j = 1}^n dz^j \otimes d\zbar^j$ on $\Thol{}{M}$, respectively. If for $f \in \mathcal{C}^\infty(M,\C)$ we write
\begin{equation*}\begin{split}
\Qc{f}{V}{W} &= \sum_{j,k = 1}^n \frac{\partial^2 f}{\partial z^j \partial z^k} V^j W^k, \\
\Hc{f}{V}{W} &= \sum_{j,k = 1}^n \frac{\partial^2 f}{\partial z^j \partial \zbar^k} V^j \bar{W}^k,
\end{split}\end{equation*}
then
\begin{align}
\label{equ:LNf}
VWf &= \Qc{f}{V}{W} + (\nabla_VW)f, \\
\label{equ:LNbarf}
V\bar{W}f &= \Hc{f}{V}{W} + (\nabla_V\bar{W})f.
\end{align}
Observe that the above formula for $\Hc{f}{V}{W}$ defines a map 
$$
\Hcsymbol{f} \colon \Vhol{}{M} \times \Vhol{}{M} \to \C, \quad 
\Hcsymbol{f}(V,W) = \partial\bar{\partial}f(V,\bar{W}).
$$
If $f$ is real-valued, then $\Hcsymbol{f}$ is a sesquilinear form on the $\mathcal{C}^\infty(M,\C)$-module $\Vhol{}{M}$. 

The above notations for the Levi-Civita connection and the Chern connection on an open set $M \subset \R^{2n} \simeq \C^n$ are unambiguous in the following sense. If $A,B,C,D \in \Vr{}{M}$ such that $C+iD \in \Vhol{}{M}$, then
$$
\nabla_{A+iB}(C+iD) = (\nabla_AC + i \nabla_AD) + i(\nabla_BC + i\nabla_BD),
$$
where on the left-hand side $\nabla$ denotes the Chern connection, and on the right-hand side $\nabla$ denotes the Levi-Civita connection.

\subsection{Defining functions, pseudoconvexity, and finite type}
Let $\Omega \subset \C^n$ be a $\mathcal{C}^2$-smoothly bounded domain and let $p_0$ in $b\Omega$. We say that a $\mathcal{C}^2$-smooth function $r \colon U \to \R$ is a defining function for $\Omega$, if
\begin{enumerate}[label=(\roman*)]
  \item $U$ is an open neighborhood of $b\Omega$,
  \item $\Omega \cap U = \{r < 0\}$,
  \item $dr \neq 0$ on $b\Omega$.
\end{enumerate}
Moreover, we say that a smooth function $r \colon U \to \R$ is a local defining function for $\Omega$ (near $p_0$), if $U$ is an open neighborhood of $p_0$, and $r$ satisfies (ii) and (iii).


For every $p \in b\Omega$, set $\Thol{p}{b\Omega} \coloneqq \Thol{p}{\C^n}  \cap \Tc{p}{b\Omega}$. A vector field $L \in \Vhol{}{U}$ is called tangential if $L_{p} \in \Thol{p}{b\Omega}$ for every $p \in b\Omega \cap U$. If $r \colon U \to \R$ is a local defining function for $\Omega$, then $\Thol{p}{b\Omega} = \{L_p \in \Thol{p}{\C^n} : \langle \partial r,L \rangle(p) = 0\}$ for every $p \in b\Omega \cap U$, and $L$ is tangential if and only if $Lr \equiv 0$ on $b\Omega \cap U$. 

The domain $\Omega$ is called pseudoconvex at $p$ if $\Hc{r}{L}{L}_{|_{b\Omega \cap U}} \ge 0$ near $p$ for all tangential vector fields $L \in \Vhol{}{U}$ near $p$. 
It is called strictly pseudconvex at $p$ if in the previous condition the inequality is strict whenever $L$ is nonvanishing.
If $\Omega$ is pseudoconvex at $p$ but not strictly pseudoconvex at $p$, then $\Omega$ is said to be weakly pseudoconvex at $p$.

Let $\Omega \subset \C^2$ be a smoothly bounded domain, and let $p \in b\Omega$. Let $r \colon U \to \R$ be a smooth local defining function for $\Omega$ near $p$, let $L$ be a nonvanishing holomorphic tangential vector field near $p$, and let $\lambda \coloneqq \Hc{r}{L}{L}_{|_{b\Omega \cap U}}$. Then the following conditions are equivalent, and independent of the choices of $r$ and $L$.
\begin{enumerate}
 \item There exists $k \in \N$ such that 
 $$
 \exists\, L_1, \ldots, L_k \in \{L,\bar{L}\} : \left\langle \partial r, [\ldots[[L_1,L_2], L_3], \ldots, L_k] \right\rangle(p) \neq 0,
 $$
 and $k$ is the smallest integer with this property. 
 \item There exists $k \in \N$ such that 
 $$
 \exists\, L_1, \ldots, L_{k-2} \in \{L,\bar{L}\} : (L_{k-2} \ldots L_1 \lambda)(p) \neq 0,
 $$
 and $k$ is the smallest integer with this property.
 \item There exists $k \in \N$ and local holomorphic coordinates $z,w$ centered at $p$, such that
 \begin{align} \label{E:FiniteTypeCoordinates}
 r(z,w) = \RE(w) + h(z,\bar{z}) + o(\abs{z}^{k}, \IM(w)),
 \end{align}
 where $h$ is a nonvanishing homogeneous polynomial of degree $k$ without pure terms. 
\end{enumerate}
The original definition (1) is given in \cite[Definition 2.3]{Kohn72}. For the equivalence of (1) and (2), see \cite[Proposition 2.8]{Kohn72}. The third characterization is implicitly contained in the proof of \cite[Lemma 3.16]{Kohn72}; see also \cite[Theorem 3.3]{Bloom78}.

If the above properties are satisfied, then $b\Omega$ is said to be of finite type at $p$, and the number $c_p \coloneqq c_p(b\Omega) \coloneqq k$ is called the type of $b\Omega$ at $p$. If $\Omega$ is pseudoconvex at $p$, then $c_p$ is an even number, see \cite[Theorem 3.1]{Kohn72}, $\Omega$ is strictly pseudoconvex at $p$ if and only if $c_p = 2$, and $\Omega$ is weakly pseudoconvex at $p$ if and only if $c_p \ge 4$.

Let $\Omega \subset \C^n$ be a smoothly bounded domain, and let $U \subset \C^n$ be open. A $\mathcal{C}^2$-smooth function $r \colon U \to \R$ is called plurisubharmonic if $\Hc{r}{V}{V} \ge 0$ for every $V \in \Vhol{}{U}$, and it is called strictly plurisubharmonic if $\Hc{r}{V}{V} > 0$ for every $V \in \Vhol{}{U}$, $V \neq 0$. Moreover, we say that $r$ is plurisubharmonic on $b\Omega \cap U$ if $\Hc{r}{V}{V}_{|_{b\Omega \cap U}} \ge 0$ for every $V \in \Vhol{}{U}$, and we say that $r$ is strictly plurisubharmonic on $b\Omega \cap U$ if $\Hc{r}{V}{V}_{|_{b\Omega \cap U}} > 0$ for every $V \in \Vhol{}{U}$, $V \neq 0$. Note that if $p \in b\Omega \cap U$ and $r$ is plurisubharmonic on $b\Omega \cap U$, then in general it does not follow that $r$ is plurisubharmonic on any open neighborhood $U' \subset U$ of $p$, even if $r$ is a local defining function for $\Omega$.


\subsection{Canonical vector fields in $\C^2$} \label{sec:CanonicalVectorFields}
Let $\Omega \subset \C^2$ be a smoothly bounded domain, and let $r \colon U \to \R$ be a smooth local defining function for $\Omega$. 
After possibly shrinking $U$, we may assume that $dr \neq 0$ on $U$. 
In this case, define vector fields $L_r,N_r \in \mathcal{V}^{1,0}(U)$ by
\begin{align}\label{E:definitionLN}
  L_r &= \frac{\sqrt{2}}{\abs{\partial r}}\left(r_w\frac{\partial}{\partial z}-r_z\frac{\partial}{\partial w}\right), \\
 \label{E:definitionN}
  N_r &= \frac{\sqrt{2}}{\abs{\partial r}}\left(r_{\bar{z}}\frac{\partial}{\partial z}+r_{\bar{w}}\frac{\partial}{\partial w}\right).
\end{align}
Then
\begin{enumerate}
  \item[(i)] $(L_r,N_r)$ is an orthogonal frame for $\Thol{}{U}$ such that $\abs{L_r} \equiv \frac{1}{\sqrt{2}} \equiv \abs{N_r} $,
  \item[(ii)] $L_rr \equiv 0$,
  \item[(iii)] $N_rr = \frac{\abs{\partial r}}{\sqrt{2}}$.
\end{enumerate}
Note that, if $\rho$ is some other smooth local defining function for $\Omega$ on $U$, then
\begin{align}\label{equ:independenceonboundary}
  L_\rho &= L_r  \quad\text{and}\quad N_\rho = N_r \quad \text{on } b\Omega \cap U.
\end{align}
\noindent We will always use the notations $L = L_r$ and $N = N_r$ without explicit reference to the choice of the defining function $r$, if we consider these vector fields only on $b\Omega$. 

We will sometimes use the abbreviated notations $L$ and $N$ also for the vector fields on the whole open set $U$, if it is clear from the context which defining function we are referring to. As an example, given a fixed local defining function for $\Omega$ as above, we introduce $X, Y, T, \nu \in \Vr{}{U}$ to be the unique real vector fields such that
\begin{align*}
  L=\textstyle\frac{1}{2}\left(X+iY\right) \quad\text{and}\quad 
  N=\textstyle\frac{1}{2}\left(\nu+iT\right).
\end{align*}
It follows from properties (ii) and (iii) above, that 
\begin{align}\label{equ:canonicalframe}
  Xr = Yr = Tr =0 \quad\text{and}\quad \nu = \frac{\GRAD r}{\abs{\GRAD r}}.
\end{align}
By property (i), and since $Y = - JX$ and $T = -J\nu$%
, the vector fields $X, Y, T, \nu$ are linearly independent at each point $q \in U$. 
In particular, it follows from (\ref{equ:independenceonboundary}) and (\ref{equ:canonicalframe}) that the restrictions of $X,Y,T$ to $b\Omega \cap U$ define a frame for $\Tr{}{b\Omega \cap U}$, which is independent of the choice of $r$.

 We write $\Hcsymbolcal{r}$ to denote the matrix associated with $\Hcsymbol{r} \colon \Vhol{}{U} \times \Vhol{}{U} \to \C$ relative to the basis $(L,N)$, i.e.,
\begin{align*}
 \Hcsymbolcal{r} \coloneqq
  \begin{pmatrix}
    \Hc{r}{L}{L} & \Hc{r}{L}{N} \\
    \Hc{r}{N}{L} & \Hc{r}{N}{N}
  \end{pmatrix}.
\end{align*}
The function $r$ is plurisubharmonic if and only if $\Hcsymbolcal{r}(q)$ is positive semi-definite at every point $q \in U$.

\subsection{Landau symbols}

Let $M$ be a smooth manifold, $p_0 \in M$, and $f,g \colon M \to \R$ be smooth functions. We use the usual notations
\begin{align*}
 f = o(g) \text{ for } p \to p_0 \quad&:\Leftrightarrow\quad \forall\; C > 0 \; : \abs{f} \le C \abs{g} \text{ in some neighborhood of }p_0,\\
   f = \mathcal{O}(g) \text{ for } p \to p_0 \quad&:\Leftrightarrow\quad \exists\; C > 0  : \abs{f} \le C \abs{g} \text{ in some neighborhood of }p_0.
\end{align*}
If it is clear from the context, we usually drop the explicit reference to the point $p_0$. Moreover, we write for $U\subset M$ open
\begin{align*}
 f = \mathcal{O}(g) \text{ on } U \quad&:\Leftrightarrow\quad \forall\; K \Subset U \,\exists\; C>0 : \abs{f} \le C\abs{g} \text{ on } K.
\end{align*}
Roughly speaking, the condition \enquote{\!$f=\mathcal{O}(g)$ on $U$} means that $f$ has at least the same order of vanishing on $U$ as $g$, but it does not contain any information about the growth of $f$ near the boundary of $U$. 
Similarly, we write
\begin{align*}
 f \le \mathcal{O}(g) \text{ on } U \quad&:\Leftrightarrow\quad \forall\; K \Subset U \,\exists\; C>0 : f \le C\abs{g} \text{ on } K.
 \end{align*}
\begin{remark}\label{R:Landau}
  Let $\Omega \subset \C^2$ be a smoothly bounded pseudoconvex domain. If $r,\rho \colon U \to \R$ are two local defining functions for $\Omega$, and if $L,L'$ are two nonvanishing tangential holomorphic vector fields on $U$, then there exists a nonvanishing smooth function $h \colon U \to \R$ such that $\Hc{r}{L}{L}_{|_{b\Omega \cap U}} = h\Hc{\rho}{L'}{L'}_{|_{b\Omega \cap U}}$. In particular, if we write $\lambda \coloneqq \Hc{r}{L}{L}_{|_{b\Omega \cap U}}$, then the class of smooth functions $f \colon b\Omega \cap U \to \R$ such that, for example,
  $$
    f = \mathcal{O}(\lambda) \quad\text{on } b\Omega \cap U
  $$ 
  is well-defined, i.e., independent of the choice of $r$ and $L$.
\end{remark}



\subsection{Miscellanea}
Let $\Omega \subset \R^N$ be a smoothly bounded domain. Let $d_{b\Omega} \colon \R^N \to [0,\infty)$,
$$ 
d_{b\Omega}(q) \coloneqq \inf_{p \in b\Omega} \abs{q-p},
$$
denote the Euclidean distance to the boundary $b\Omega$, and let $\delta_{b\Omega} \colon \R^N \to \R$,
$$
\delta_{b\Omega}(q) \coloneqq \begin{cases} d_{b\Omega}(q), & \text{ if } q \in \R^N \setminus \Omega \\ -d_{b\Omega}(q), & \text{ if } q \in \Omega \end{cases},
$$ 
be the associated signed distance function. Let $U \subset \R^N$ be an open neighborhood of $b\Omega$ such that there exists a smooth map $\pi \colon U \to b\Omega$ with $\abs{q-\pi(q)}=d_{b\Omega}(q)$,
see, e.g.,  \cite[Lemma 4.11]{Federer59} for existence and \cite[Lemma 1 in \S 15.5]{GilbargTrudinger} for smoothness of the map $\pi$. Finally, let $\nu$ denote the outward unit normal vector field along $b\Omega$, and note that this notation is consistent with the one given in \Cref{sec:CanonicalVectorFields}. 
If $f \colon U \to \R$ is smooth, then by Taylor's formula it follows that
\begin{align}\label{E:Taylor}
  f &= f \circ \pi +\delta_{b\Omega} ((\nu f) \circ \pi) +\mathcal{O}(d_{b\Omega}^{2}) \text{ on } U.
\end{align}
See also, e.g., (2.1) in \cite{ForHer07,ForHer08}.

If $f \colon \R \to \R$ is a nonnegative $\mathcal{C}^2$-smooth function and $f(x_0) = 0$, then it follows readily from L'Hospital's rule that $\abs{f'}^2 \le Cf$ near $x_0$ for some constant $C>0$. We will repeatedly need the following generalizations of this simple fact.

\begin{lemma} \label{thm:lHospital}
The following assertions hold true.
\begin{enumerate}
 \item Let $U \subset \R^N$ be open, and let $f \colon U \to [0,\infty)$ be a $\mathcal{C}^2$-smooth function. 
 Then for every $K \Subset U$ there exists a constant $C>0$ such that 
 \begin{align}\label{E:nonnegativefunctionestimate}
   |df|^2\leq Cf \text{ on } K.
 \end{align}

 \item Let $\Omega \subset \R^N$ be a smoothly bounded domain, let $U \subset \R^N$ be open, 
 and let $f \colon b\Omega \cap U \to [0,\infty)$ be a $\mathcal{C}^2$-smooth function. 
 Then  
 \begin{align} \label{equ:VectorFieldlHospital}
   Vf = \mathcal{O}(\sqrt{f}) \text{ on } b\Omega \cap U
 \end{align} 
 for every $V \in \Vr{}{b\Omega \cap U}$.
\end{enumerate}
\end{lemma}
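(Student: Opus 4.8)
The plan is to reduce both assertions to the one-dimensional estimate behind the ``simple fact'' quoted just before the lemma, and then to bootstrap: part (2) will follow from part (1) by localizing on the boundary manifold.

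First I would isolate a quantitative, interval-based version of the one-dimensional inequality. Suppose $g$ is a nonnegative $\mathcal{C}^2$ function on an interval containing $[x-\delta,x+\delta]$ and $\abs{g''}\le M$ there, with $M>0$ (which we may always arrange by enlarging $M$). Taylor's formula gives $g(x+t)=g(x)+g'(x)t+\tfrac12 g''(\xi)t^2$, so nonnegativity of $g$ forces $0\le g(x)+g'(x)t+\tfrac12 Mt^2$ for $\abs{t}\le\delta$. Evaluating the right-hand side at its vertex $t^\ast=-g'(x)/M$, which lies in $[-\delta,\delta]$ precisely when $\abs{g'(x)}\le M\delta$, yields $\abs{g'(x)}^2\le 2Mg(x)$. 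For part (1), fix $K\Subset U$, pick $\delta>0$ with $K'\coloneqq\{y:\DIST(y,K)\le\delta\}\Subset U$, and set $M\coloneqq\sup_{K'}\norm{\mathrm{Hess}\,f}_{\mathrm{op}}$ and $M_1\coloneqq\sup_{K'}\abs{df}$. For $x\in K$ with $df(x)\neq0$ I apply the one-dimensional estimate to $g(t)\coloneqq f(x+te)$ along $e\coloneqq\nabla f(x)/\abs{\nabla f(x)}$, so that $g'(0)=\abs{df(x)}$ and $\abs{g''}\le M$ on $[-\delta,\delta]$. If $\abs{df(x)}\le M\delta$, this gives $\abs{df(x)}^2\le 2Mf(x)$ directly. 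Otherwise, evaluating $g$ at $t=-\delta$ and using $g(-\delta)\ge0$ forces $f(x)=g(0)\ge\tfrac12 M\delta^2$, so that $\abs{df(x)}^2\le M_1^2\le\tfrac{2M_1^2}{M\delta^2}f(x)$. Taking $C\coloneqq\max\{2M,\,2M_1^2/(M\delta^2)\}$ establishes \eqref{E:nonnegativefunctionestimate}, and the case $df(x)=0$ is trivial.

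For part (2) the statement is local on the boundary manifold $b\Omega\cap U$, so I would cover a given compact set $K\Subset b\Omega\cap U$ by finitely many coordinate charts. In each chart $b\Omega$ is identified with an open set $W\subset\R^{N-1}$, the function $f$ with a nonnegative $\mathcal{C}^2$ function $\tilde f$ on $W$, and the real vector field $V$ with $\sum_i a^i\frac{\partial}{\partial s^i}$ for smooth coefficients $a^i$. Cauchy--Schwarz then gives $\abs{Vf}=\bigl\lvert\sum_i a^i\tfrac{\partial\tilde f}{\partial s^i}\bigr\rvert\le\abs{a}\,\abs{d\tilde f}$, and part (1) applied to $\tilde f$ on a suitable compact subset of $W$ yields $\abs{d\tilde f}\le\sqrt{C}\,\sqrt{\tilde f}$; since $\abs{a}$ is bounded on compacta, this is exactly \eqref{equ:VectorFieldlHospital} in the chart. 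Patching over the finite subcover and taking the largest resulting constant completes the proof.

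The one genuine subtlety lies in part (1). The clean ``global'' discriminant argument secretly relies on the nonnegativity of $g$ arbitrarily far from $x$, which is unavailable when $f$ is only defined on $U$ and we want a constant uniform over a fixed $K$. The two-regime split above is designed precisely to circumvent this: it produces the bound using only values of $f$ inside $K'$, at the cost of separately handling the points where the gradient is too large relative to $M\delta$ (where $f$ is automatically bounded below). Everything else—the reduction in part (2), the compactness patching, and the $\mathcal{C}^2$ Taylor estimates—is routine.
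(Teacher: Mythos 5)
Your proof is correct, and it differs from the paper's in two respects worth noting. For part (1) the paper simply cites \cite[Lemma 4.3]{ForHer08}, whereas you supply a complete argument; your two-regime version of the discriminant trick is the right fix for the genuine issue you identify, namely that the naive ``complete the square'' argument uses nonnegativity of $f$ along the whole line, which is unavailable on a proper open set $U$ if one wants a constant uniform over $K \Subset U$. Splitting at the threshold $\abs{df(x)} \le M\delta$, and in the complementary regime deducing the lower bound $f(x) \ge \tfrac{1}{2}M\delta^2$ from $g(-\delta)\ge 0$, is exactly what makes the constant depend only on $K' = \{y : \DIST(y,K)\le\delta\}$; the estimates check out. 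For part (2) both proofs reduce to part (1) plus Cauchy--Schwarz, but by different mechanisms: the paper takes a nonnegative $\mathcal{C}^2$ extension $F$ of $f$ to the ambient open set $U \subset \R^N$, applies part (1) to $F$ there, and restricts (using $\abs{Vf} = \abs{\ang{dF,V}} \le \abs{dF}$ for $\abs{V}\le 1$), while you stay intrinsic to the hypersurface, pulling $f$ back to $\R^{N-1}$ in finitely many charts and applying part (1) in one dimension lower. The extension route is shorter and needs no atlas or patching, but it quietly requires the existence of a \emph{nonnegative} $\mathcal{C}^2$ extension (available, e.g., as $f \circ \pi$ with the normal projection $\pi$ of the paper's Miscellanea subsection, since $\max$ with $0$ would destroy $\mathcal{C}^2$ regularity); your chart argument sidesteps that point entirely at the cost of a routine compactness-and-patching step, and both yield the locally uniform constants that the paper's $\mathcal{O}(\cdot)$ convention on $b\Omega \cap U$ demands.
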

\begin{proof}
For a proof of part (1) see, e.g., \cite[Lemma 4.3]{ForHer08}. In order to prove part (2), let $F \colon U \to [0,\infty)$ be a $\mathcal{C}^2$-smooth extension of $f$, and for given $K \Subset b\Omega \cap U$ let $C>0$ be a constant such that $\abs{dF}^2 \le CF$ on $K$, see (\ref{E:nonnegativefunctionestimate}). Then $\abs{Vf}^2 = \abs{\ang{df,V}}^2 =  \abs{\ang{dF,V}}^2 \le \abs{dF}^2 \le CF = Cf$ on $K$ for every $V \in \Vr{}{b\Omega \cap U}$ such that $\abs{V} \le 1$, which implies (\ref{equ:VectorFieldlHospital}).
\end{proof}



%
%
%
\section{A counterexample} \label{sec:counterexample}
Consider $\mathbb{C}^2$ with coordinates $(z,w)$, $z = x+iy$, $w = u+ iv$.
\begin{theorem}\label{T:counterexample}
  For fixed $k \in \N$, $k \ge 3$, let
  $$
  r(z,w)\coloneqq u +\textstyle \frac{1}{k^2}|z|^{2k} - \frac{2}{(k-1)^2} |z|^{2k-2}v + 
  \frac{1}{(k-2)^2}|z|^{2k-4}v^2 + |z|^{4k-2},
  $$
  and set 
  $$ 
  \Omega \coloneqq \big\{(z,w) \in \mathbb{C}^2 : r(z,w) < 0\big\}.
  $$
  Then the following assertions hold true.
  \begin{itemize}
  \item[(i)] There exists an open neighborhood $U \subset \C^2$ of $0 \in b\Omega$, such that $\Omega \cap U$ is pseudoconvex, and such that the following holds. If $k=3$, then $0 \in b\Omega$ is the only weakly pseudoconvex boundary point of $\Omega$ in $b\Omega \cap U$. If $k>3$, then the set of weakly pseudoconvex boundary points of $\Omega$ in $b\Omega \cap U$ is $(b\Omega \cap U) \cap (\{0\} \times \C)$. Moreover, $b\Omega$ is of finite type $c_0 = 2k$ at $0$.
  \item[(ii)] Let $V \subset \C^2$ be an open neighborhood of $0$ and let $\rho \colon V \to \R$ be a $\mathcal{C}^2$-smooth local defining function for $\Omega$. Then $\rho$ is not plurisubharmonic on $b\Omega \cap V$.
  \end{itemize}  
\end{theorem}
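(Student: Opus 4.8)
\emph{Part (i)} is a direct computation. Write $t=\abs{z}^2$, $v=\IM w$, $u=\RE w$, and let $P(t,v)$ be the sum of the four $z$-dependent terms of $r$, so $r=u+P(t,v)$. Since $t=z\bar z$ one gets $\frac{\partial^2 r}{\partial z\,\partial\bar z}=tP_{tt}+P_t$, and the special coefficients in $r$ are chosen exactly so that this collapses to a sum of squares,
\[ \frac{\partial^2 r}{\partial z\,\partial\bar z}=\abs{z}^{2(k-3)}\bigl(\abs{z}^2-v\bigr)^2+(2k-1)^2\abs{z}^{4(k-1)}. \]
As $r_z=\mathcal{O}(\abs{z}^{2k-3})$, the Levi form $\lambda=\Hc{r}{L}{L}_{|_{b\Omega}}$ equals $\frac{\partial^2 r}{\partial z\,\partial\bar z}$ times a positive factor plus terms vanishing to strictly higher order near $0$; hence $\lambda\ge0$ on $b\Omega$ near $0$, its zero set is $\{0\}$ if $k=3$ and $(b\Omega\cap U)\cap(\{0\}\times\C)$ if $k>3$, and $\lambda$ vanishes to order $2k-2$ along $\{v=0\}$, giving $c_0=2k$.

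\emph{Part (ii).} Assume toward a contradiction that some $\mathcal C^2$ local defining function $\rho$ is plurisubharmonic on $b\Omega\cap V$. Near $b\Omega$ write $\rho=hr$ with $h>0$; since $r,\rho\in\mathcal C^2$ this gives $h\in\mathcal C^1$. Using $Lr\equiv0$ and $Nr=\abs{\partial r}/\sqrt2$, a direct expansion of $\partial\bar\partial(hr)$ on $b\Omega$ yields $\Hc{\rho}{L}{L}=h\lambda$ and
\[ \Hc{\rho}{L}{N}=h\,\Hc{r}{L}{N}+\frac{\abs{\partial r}}{\sqrt2}\,Lh. \]
Plurisubharmonicity on $b\Omega$ forces the Hermitian matrix of $\Hcsymbol{\rho}$ in the frame $(L,N)$ to be positive semidefinite, so $\abs{\Hc{\rho}{L}{N}}^2\le\Hc{\rho}{L}{L}\cdot\Hc{\rho}{N}{N}\le C\lambda$ on a fixed compact neighborhood of $0$ in $b\Omega$, where $h$ and $\Hc{\rho}{N}{N}$ are bounded. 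With $\mu\coloneqq\Hc{r}{L}{N}$ this reads
\[ Lh=-\frac{\sqrt2}{\abs{\partial r}}\,h\mu+\mathcal{O}(\sqrt\lambda)\quad\text{on }b\Omega\text{ near }0. \]

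The construction is tuned to this equation. One computes $\frac{\partial^2 r}{\partial z\,\partial\bar w}=\tfrac{i}{2}\bar z\,P_{vt}$, and near $0$ one has $\tfrac{\sqrt2}{\abs{\partial r}}\mu=B+(\text{higher order})$ with $B\coloneqq i\bar z\,P_{vt}$, so that $zB=i\abs{z}^2P_{vt}$ depends only on $\abs{z}$ and $v$. Consider the circles $C_s=\{\abs{z}=s,\ \IM w=s^2,\ \RE w=-P(s^2,s^2)\}$; since $\abs{z}^2$ and $v$ are constant along $C_s$, we have $r\equiv0$ there, so $C_s\subset b\Omega$, and $C_s$ lies in the complex line $\{w=\mathrm{const}\}$. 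As $h$ is real, single-valued and $\mathcal C^1$, integrating the identity $\tfrac{d}{d\theta}h=-2\IM(zh_z)$ over $C_s$ (with $z=se^{i\theta}$) gives $\IM\oint_0^{2\pi}z\,h_z\,d\theta=0$. On $C_s$ the forced equation together with $h_z-Lh=\mathcal{O}(\abs{z}^{2k-1})$ gives $h_z=-hB+\mathcal{O}(\sqrt\lambda)$, and since $zB=is^2P_{vt}(s^2,s^2)$ is constant along $C_s$,
\[ 0=\IM\oint_0^{2\pi}z\,h_z\,d\theta=-s^2P_{vt}(s^2,s^2)\int_0^{2\pi}h\,d\theta+\mathcal{O}(s^{2k-1}). \]
Since $s^2P_{vt}(s^2,s^2)=\tfrac{2}{(k-1)(k-2)}\,s^{2k-2}$ and $\int_0^{2\pi}h\,d\theta\to2\pi h(0)>0$ as $s\to0$, the right-hand side equals $-\tfrac{4\pi h(0)}{(k-1)(k-2)}s^{2k-2}+\mathcal{O}(s^{2k-1})$, which is nonzero for all sufficiently small $s>0$: a contradiction, proving (ii).

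The main obstacle is the error analysis in the last step: one must verify that the discrepancy $h_z-Lh$, the subleading part of $\tfrac{\sqrt2}{\abs{\partial r}}\mu-B$, and the $\mathcal{O}(\sqrt\lambda)$ term each contribute only $\mathcal{O}(s^{2k-1})$ to $\IM\oint z h_z\,d\theta$, so that the genuine leading term of order $s^{2k-2}$ is not absorbed. This rests on the fact that along $C_s$ (where $v=\abs{z}^2$, the ``valley'' of $\lambda$) one has $\sqrt\lambda\sim s^{2k-2}$ while $B\sim s^{2k-3}$, so the forced singular behaviour of $Lh$ genuinely dominates the admissible error. A secondary point is regularity: using the loop integral, i.e.\ the single-valuedness of the $\mathcal C^1$ function $h$, rather than the equality of mixed second derivatives of $\log h$, is what keeps the entire argument within the $\mathcal C^1$-regularity that is all one can extract for $h$ from $\rho\in\mathcal C^2$.
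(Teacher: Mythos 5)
Your proof is correct, and its skeleton coincides with the paper's: you use the same family of boundary circles $C_s$ sitting in the valley $\{v=\abs{z}^2\}$ (the paper's curves $\gamma_\sigma(t)=f(\sigma e^{it})$), the same quotient $h$ with $\mathcal{C}^1$ regularity, and the same single-valuedness/loop-integral contradiction with the identical order bookkeeping ($s^{2k-2}$ leading term versus $O(s^{2k-1})$ errors; your coefficient $\tfrac{4\pi h(0)}{(k-1)(k-2)}s^{2k-2}$ matches the paper's $4\mu_k\sigma^{2k-2}\cdot\oint$). Where you genuinely diverge is in how the key asymptotics for $h_z$ along the valley are obtained. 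The paper works with the Euclidean entries $\rho_{z\bar z},\rho_{z\bar w},\rho_{w\bar w}$, and there the boundary identity $\rho_{z\bar z}=(r_{z\bar z}+2\RE[r_zh_{\bar z}])e^h$ contaminates the $(1,1)$-entry with the unknown gradient of $h$: along the valley $r_z=O(\abs{\zeta}^{2k-1})$ swamps $r_{z\bar z}=O(\abs{\zeta}^{4k-4})$, so the determinant inequality is a priori too weak and the paper must run a bootstrap on the vanishing order of $h_z\circ f$, namely \eqref{equ:orderhz}, before extracting \eqref{equ:hz}. You instead pass to the tangential--normal frame $(L,N)$: since $Lr=0$ and $r=0$ on $b\Omega$, the contamination drops out, $\Hc{\rho}{L}{L}=h\lambda$ holds exactly on the boundary, and Cauchy--Schwarz for the positive semi-definite matrix $\Hcsymbolcal{\rho}$ together with boundedness of $\Hc{\rho}{N}{N}$ (automatic from $\rho\in\mathcal{C}^2$) gives $\abs{\Hc{\rho}{L}{N}}=\mathcal{O}(\sqrt{\lambda})$ in one stroke --- in effect you are running Proposition~\ref{L:pshonboundaryiff} in $\mathcal{C}^2/\mathcal{C}^1$ regularity. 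This eliminates the bootstrap entirely and is arguably cleaner; the shared cost is the mollification caveat needed to interpret $\partial\bar\partial(hr)$ on $b\Omega$ when $h$ is only $\mathcal{C}^1$ (the term $r\,\partial\bar\partial h$ killed by $r=0$), which the paper invokes in the same parenthetical way, and which your loop-integral formulation --- rather than equality of mixed second derivatives --- correctly respects.

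Two slips, both harmless. First, along $C_s$ the discrepancy $h_z-Lh$ is a priori only $O(s^{2k-2})$, not $O(s^{2k-1})$: one has $\tfrac{\sqrt{2}\,r_w}{\abs{\partial r}}=1+O(s^{2k-2})$ on the valley (the $O(s^{2k-2})$ coming from $\IM r_w$, while $\abs{\partial r}$ deviates from $1/\sqrt 2$ only at order $s^{4k-4}$), and this multiplies the merely bounded $h_z$; but after multiplication by $z$ it still contributes only $O(s^{2k-1})$ to the integral, so your contradiction survives (alternatively, the equation itself self-improves $h_z$ to $O(s^{2k-3})$, making the discrepancy $O(s^{4k-5})$). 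Second, in part (i) the cross term $-2\RE[r_{z\bar w}r_wr_{\bar z}]$ is not pointwise of strictly higher order than $r_{z\bar z}\abs{r_w}^2$: it contains a mixed piece of size $o(1)\abs{z}^{2k-6}\abs{a}\abs{z}^{k+1}$ with $a=\abs{z}^2-v$, comparable to the geometric mean of the two diagonal squares, which must be absorbed by the elementary quadratic-form estimate exactly as in the paper's proof; your sum-of-squares identity $r_{z\bar z}=\abs{z}^{2(k-3)}(\abs{z}^2-v)^2+(2k-1)^2\abs{z}^{4(k-1)}$ is correct and the conclusion stands, but the phrase ``strictly higher order'' elides this absorption step.
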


\begin{proof}
   (i) In a slight deviation 
   from \Cref{sec:preliminaries}, set 
   $
   L=r_w\textstyle\frac{\partial}{\partial z}-r_z\frac{\partial}{\partial w}.
   $
   Then   
   \begin{align*}
      \Hc{r}{L}{L}= r_{z\zbar}\abs{r_w}^2 - 2\RE[r_{z\wbar} r_w r_\zbar] + r_{w\wbar}\abs{r_z}^2. 
   \end{align*} 
   Computing the relevant terms, we obtain
  \begin{equation*} \begin{split}
    r_z(z,w) &=\textstyle \frac{1}{k} \zbar\abs{z}^{2k-2} - \frac{2}{k-1}\zbar\abs{z}^{2k-4}v 
              + \frac{1}{k-2}\zbar \abs{z}^{2k-6} v^2 + (2k-1)\zbar\abs{z}^{4k-4}, \\[0.5ex]
    r_w(z,w) &=\textstyle \frac{1}{2} 
             + i(\textstyle\frac{1}{(k-1)^2}\abs{z}^{2k-2} - \frac{1}{(k-2)^2}\abs{z}^{2k-4}v), \\[0.5ex]
    r_{z\zbar}(z,w) &=\textstyle \abs{z}^{2k-6}(\abs{z}^2-v)^2 + (2k-1)^2\abs{z}^{4k-4}, \\[0.5ex]
    r_{z\wbar}(z,w) &= -\textstyle\frac{i}{k-1}\zbar\abs{z}^{2k-4} + \frac{i}{k-2}\zbar\abs{z}^{2k-6}v, \\[0.5ex]
    r_{w\wbar}(z,w) &= \textstyle\frac{1}{2(k-2)^2}\abs{z}^{2k-4}. 
  \end{split} \end{equation*}
  These equations lead straightforwardly to the estimates
  \begin{equation*} \begin{split}
    (r_{z\zbar}\abs{r_w}^2)(z,w) &\ge \textstyle\frac{1}{4} \big[\abs{z}^{2k-6}(\abs{z}^2-v)^2 + \abs{z}^{4k-4}\big], \\
    -2\RE[r_{z\wbar} r_w r_\zbar](z,w) &= \abs{z}^{6k-14}\sum_{j=0}^4 \mathcal{O}(\abs{z}^{8-2j}v^j), \\
    (r_{w\wbar}\abs{r_z}^2)(z,w) &\ge 0. 
  \end{split} \end{equation*} 
  Setting $a \coloneqq \abs{z}^2-v$, it follows that for $\abs{z}$ and  $v$ sufficiently close to $0$
  \begin{equation*}\begin{split}
     \Hc{r}{L}{L}(z,w) 
     & \ge \textstyle\frac{1}{4}(\abs{z}^{2k-6}a^2 + \abs{z}^{4k-4}) + \abs{z}^{6k-14}\sum_{j=0}^4 \mathcal{O}(\abs{z}^{8-2j}a^j) \\
     & = \textstyle\frac{1}{4}\abs{z}^{2k-6}\big[(a^2 + \abs{z}^{2k+2}) +o(1)(a^2 + a\abs{z}^{k+1} + \abs{z}^{2k+2})\big] \\
     & \ge \textstyle\frac{1}{8}\abs{z}^{2k-6}(a^2 + \abs{z}^{2k+2}) .
  \end{split}\end{equation*}%
 This shows that $\Omega$ is pseudoconvex near $0$, and that the set of weakly pseudoconvex points of $\Omega$ has the form as described above. It is clear from (\ref{E:FiniteTypeCoordinates}) that $c_0 = 2k$.

  (ii) Assume, in order to get a contradiction, that $\rho \colon V \to \R$ is a $\mathcal{C}^2$-smooth local defining function for $\Omega$ near $0 \in b\Omega$ such that $\rho$ is plurisubharmonic on $b\Omega \cap V$. There exists a $\mathcal{C}^1$-smooth function $h \colon V \to \R$ such that $\rho = re^h$.
Thus on $b\Omega\cap V$ one has (since $h \ast \delta_j \to h$ in the $\mathcal{C}^1$-norm for every Dirac sequence $\{\delta_j\}$)
\begin{equation} \begin{split} \label{equ:formrho}
\rho_{z\zbar} &= \big(r_{z\zbar} + 2\RE[r_zh_\zbar]\big)e^h , \\
\rho_{z\wbar} &= \big(r_{z\wbar} + r_zh_\wbar + r_\wbar h_z \big)e^h.
\end{split} \end{equation}
For $\varepsilon > 0$ sufficiently small, let $f \colon \D(0,\varepsilon) \to b\Omega \cap V$ be the smooth map 
\begin{align}\label{E:curveinboundary}
f(\zeta) \coloneqq (\zeta,u(|\zeta|) + i\abs{\zeta}^2),\, u(|\zeta|) \coloneqq \textstyle -\big(\frac{1}{k^2} - \frac{2}{(k-1)^2} + \frac{1}{(k-2)^2}\big)\abs{\zeta}^{2k} - \abs{\zeta}^{4k-2},
\end{align}
where $\D(0,\varepsilon) \coloneqq \{\zeta \in \C : \abs{\zeta} < \varepsilon\}$. We claim that 
\begin{align} \label{equ:orderhz}
(h_z \circ f)(\zeta) = O(\abs{\zeta}^{2k-3}).
\end{align}
Indeed, if not, then the number $m \in \N \cup \{0\}$ such that $(h_z \circ f)(\zeta) = O(\abs{\zeta}^m)$ but $(h_z \circ f)(\zeta) \neq O(\abs{\zeta}^{m+1})$ satisfies $m<2k-3$. Hence, in view of (\ref{equ:formrho}) and the computations in part (i), we see that $(\rho_{z\zbar} \circ f)(\zeta) = O(\abs{\zeta}^{2k-1+m})$ and $(\rho_{z\wbar} \circ f)(\zeta) \neq O(\abs{\zeta}^{m+1})$. Thus $((\rho_{z\zbar}\rho_{w\wbar}) \circ f)(\zeta) = O(\abs{\zeta}^{2k-1+m})$ and $(\abs{\rho_{z\wbar}}^2 \circ f)(\zeta) \neq O(\abs{\zeta}^{2m+2})$. In view of the inequality $2k-1+m>2m+2$, this contradicts the fact that $\rho$ is psh on $b\Omega$ near $0$, since this implies that $\left(\rho_{z\zbar}\rho_{w\wbar} - \abs{\rho_{z\wbar}}^2\right)\circ f \ge 0$.

From (\ref{equ:formrho}), (\ref{equ:orderhz}) and the computations in part (i), we conclude that
\begin{align*} 
(\rho_{z\zbar} \circ f)(\zeta) &= O(\abs{\zeta}^{4k-4}), \\
(\rho_{z\wbar} \circ f)(\zeta) &= \textstyle i\mu_k\bar{\zeta}\abs{\zeta}^{2k-4} + \textstyle\frac{1}{2}(h_z\circ f)(\zeta) + O(\abs{\zeta}^{2k-2}), \quad \mu_k \coloneqq \frac{1}{(k-1)(k-2)}.
\end{align*}
Since $\left(\rho_{z\zbar}\rho_{w\wbar} - \abs{\rho_{z\wbar}}^2\right)\circ f \ge 0$, it follows that
\begin{equation} \label{equ:hz}
 (h_z \circ f)(\zeta) = -2i\mu_k\bar{\zeta}\abs{\zeta}^{2k-4} + o(\abs{\zeta}^{2k-3}).
\end{equation}
For given $\sigma > 0$, define the curve $\gamma_{\sigma}=\gamma=(\gamma_1,\gamma_2) \colon [0,2\pi] \to b\Omega$  
by $\gamma(t)\coloneqq f(\sigma e^{it})$.
Then, for $\sigma > 0$ sufficiently small, it follows from (\ref{equ:hz}) that
\begin{equation*} \begin{split}
 \int_\gamma dh 
 &= 2\RE\int_0^{2\pi} h_z(\gamma(t)) \cdot \dot{\gamma_1}(t) \,dt \\
 &= 2\RE\int_0^{2\pi}(-2i\mu_k\sigma^{2k-3}e^{-it} + o(\sigma^{2k-3})) \cdot (i\sigma e^{it}) \,dt \\
 &=  4\mu_k\int_0^{2\pi} (\sigma^{2k-2} + o(\sigma^{2k-2})) \,dt.
\end{split} \end{equation*}
Thus $\int_\gamma dh \neq 0$ whenever $\sigma>0$ is sufficiently small. This is a contradiction.
\end{proof}

\begin{theorem} \label{T:counterexampleglobal}
  For fixed $k\in\mathbb{N}$, $k\geq 3$, let
  $$r(z,w) \coloneqq u+\textstyle\frac{1}{k^2}|z|^{2k}-\frac{2}{(k-1)^2}|z|^{2k-2}v+\frac{1}{(k-2)^2}|z|^{2k-4}v^2+|z|^{4k-2}+ \abs{w}^2,$$
  and set 
  $$
  \Omega \coloneqq \{(z,w)\in\mathbb{C}^2 : r(z,w)<0 \}.
  $$
  Then the following assertions hold true.
  \begin{itemize}
    \item[(i)] $\Omega$ is a bounded domain with smooth real-analytic boundary.
    \item[(ii)] $\Omega$ is pseudoconvex. If $k=3$, then $0 \in b\Omega$ is the only weakly pseudoconvex boundary point of $\Omega$. If $k>3$, the set of weakly pseudonconvex boundary points of $\Omega$ is $b\Omega \cap (\{0\} \times \C)$. Moreover, $b\Omega$ is of finite type $c_0 = 2k$ at $0$.
    \item[(iii)] Any $\mathcal{C}^2$-smooth local defining function for $\Omega$ near $0 \in b\Omega$ fails to be plurisubharmonic on $b\Omega$ near $0$.
  \end{itemize}
\end{theorem}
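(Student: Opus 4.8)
The plan is to obtain parts (i) and (ii) by direct computation, exploiting that the present defining function differs from the one in \Cref{T:counterexample} only by the plurisubharmonic summand $\abs{w}^2$, and to deduce part (iii)---the actual content of the statement---by transporting the argument of \Cref{T:counterexample}(ii) onto the new boundary.

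For (i) I would first record that $P(z,v)\coloneqq\frac{1}{k^2}\abs{z}^{2k}-\frac{2}{(k-1)^2}\abs{z}^{2k-2}v+\frac{1}{(k-2)^2}\abs{z}^{2k-4}v^2+\abs{z}^{4k-2}$ is nonnegative: written as $\abs{z}^{2k-4}$ times a quadratic form in $(\abs{z}^2,v)$ plus $\abs{z}^{4k-2}$, the inequality $k(k-2)<(k-1)^2$ makes the discriminant negative, so the form is positive definite and $P\ge 0$, with equality only on $\{z=0\}$. Since $r=u+P+\abs{w}^2$, on $\Omega=\{r<0\}$ one gets $u+u^2+v^2<0$, i.e.\ $(u+\tfrac12)^2+v^2<\tfrac14$, which confines $w$ to a disk and then forces $\abs{z}^{4k-2}\le P<1$; hence $\Omega$ is bounded. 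Real-analyticity of $b\Omega$ is clear since $r$ is a polynomial in $z,\zbar,w,\wbar$. For $dr\ne 0$ on $b\Omega$, the inequality above gives $u\in[-1,0]$ on $b\Omega$, so $r_u=1+2u$ can vanish only where $u=-\tfrac12$; at such a point $v^2+P=\tfrac14$, and a short check (trivial when $z=0$, since then $v=\pm\tfrac12$ and $r_v=2v\ne 0$) shows $r_v$ and $r_z$ cannot vanish simultaneously, so $dr\ne 0$.

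For (ii) I would compute the Levi form as in \Cref{T:counterexample}(i) with the unnormalized tangential field $L=r_w\frac{\partial}{\partial z}-r_z\frac{\partial}{\partial w}$, giving
$$
\Hc{r}{L}{L}=r_{z\zbar}\abs{r_w}^2-2\RE[r_{z\wbar}r_wr_\zbar]+r_{w\wbar}\abs{r_z}^2.
$$
Adding $\abs{w}^2$ changes only $r_w$ (by $\wbar$)---hence also its conjugate $r_\wbar$---and $r_{w\wbar}$ (by $+1$); in particular $r_z,r_\zbar,r_{z\zbar},r_{z\wbar}$ are unchanged and $r_{w\wbar}\ge 1$, so the last summand only helps. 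On the compact boundary I would then prove $\Hc{r}{L}{L}\ge 0$ by combining the estimate of \Cref{T:counterexample}(i) near $\{z=0\}$ (now with the extra nonnegative contribution $\ge\abs{r_z}^2$) with a separate argument on $\{\abs{z}\ge\varepsilon\}$, where $r_{z\zbar}\ge(2k-1)^2\abs{z}^{4k-4}$ is bounded below and the cross term must be controlled. Strict pseudoconvexity at every boundary point with $z\ne 0$ then places the weakly pseudoconvex locus inside $b\Omega\cap(\{0\}\times\C)$; on that set $r_z=r_{z\wbar}=0$, so $L\propto\frac{\partial}{\partial z}$ and $\Hc{r}{L}{L}=\abs{r_w}^2 r_{z\zbar}$, and inspecting its zeros gives the description in (ii). That $b\Omega$ has type $2k$ at $0$ follows from \eqref{E:FiniteTypeCoordinates}.

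Part (iii) is then obtained by rerunning the proof of \Cref{T:counterexample}(ii), the only adjustment being the boundary curve. I would replace $f$ by the real-analytic map $f(\zeta)=(\zeta,U(\zeta)+i\abs{\zeta}^2)$, where $U$ is the small solution of $r(f(\zeta))=0$; since $P(\abs{\zeta},\abs{\zeta}^2)=-u(\abs{\zeta})$ for $u$ as in \eqref{E:curveinboundary}, this equation reads $U+U^2+\abs{\zeta}^4=u(\abs{\zeta})$, which the implicit function theorem solves with $U(\zeta)=O(\abs{\zeta}^4)$. The decisive point is that $r_z,r_{z\zbar},r_{z\wbar}$ depend only on $z,\zbar,v$, and on $f$ one still has $v=\abs{\zeta}^2$; hence these three functions take \emph{exactly} the same values along the new curve as in \Cref{T:counterexample}(ii), namely $(r_{z\zbar}\circ f)(\zeta)=(2k-1)^2\abs{\zeta}^{4k-4}$, $(r_z\circ f)(\zeta)=O(\abs{\zeta}^{2k-1})$, and $(r_{z\wbar}\circ f)(\zeta)=i\mu_k\bar\zeta\abs{\zeta}^{2k-4}$ with $\mu_k=\frac{1}{(k-1)(k-2)}$. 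The modified quantities satisfy $(r_\wbar\circ f)=\tfrac12+O(\abs{\zeta}^2)$ and $(r_{w\wbar}\circ f)=O(1)$, exactly the leading behaviour used before. Writing $\rho=re^h$ with $h\in\mathcal C^1$, the identities \eqref{equ:formrho} hold on $b\Omega$, and the order computation giving $(h_z\circ f)(\zeta)=O(\abs{\zeta}^{2k-3})$ and then \eqref{equ:hz} goes through verbatim, since it used only an $O(1)$ upper bound on $\rho_{w\wbar}$, which still holds. Integrating $dh$ over $\gamma_\sigma(t)=f(\sigma e^{it})$ then produces a nonzero period for small $\sigma$, contradicting single-valuedness of $h$.

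The principal obstacle is the global pseudoconvexity in (ii). Unlike plurisubharmonicity, $\Hc{r}{L}{L}\ge 0$ cannot be read off from the determinant bound $r_{z\zbar}r_{w\wbar}\ge\abs{r_{z\wbar}}^2$---indeed that bound must fail near $0$ by part (iii)---so the sign and size of the cross term $-2\RE[r_{z\wbar}r_wr_\zbar]$ have to be controlled honestly over the entire compact boundary, and it is precisely here that the special choice of the coefficients $\tfrac{1}{k^2},\tfrac{2}{(k-1)^2},\tfrac{1}{(k-2)^2}$ enters. By contrast, once the curve has been relocated, part (iii) is essentially free: the $z$-derivatives driving the contradiction are literally unaffected by the addition of $\abs{w}^2$.
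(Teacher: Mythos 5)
Your parts (i) and (iii) are correct and coincide with the paper's treatment: for (iii) the paper likewise reduces to the proof of \Cref{T:counterexample}(ii) after noting that the boundary equation $r(\zeta,U+i\abs{\zeta}^2)=0$ becomes $U+U^2+\abs{\zeta}^4=u(\abs{\zeta})$ with two roots, of which one takes the small branch; your observation that $r_z$, $r_{z\zbar}$, $r_{z\wbar}$ depend only on $(z,\zbar,v)$ and hence take identical values along the relocated curve, while $r_{\wbar}\circ f=\tfrac12+\mathcal{O}(\abs{\zeta}^2)$ and $r_{w\wbar}=\mathcal{O}(1)$ suffice for the order count, is exactly why the transplant works.

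The genuine gap is part (ii), which the paper itself identifies as the entire difficulty of the theorem and which your proposal leaves as a sketch. Two concrete problems. First, your decomposition by $\abs{z}$ alone does not cover the boundary: the estimate of \Cref{T:counterexample}(i) was derived under the hypothesis that \emph{both} $\abs{z}$ and $v$ are close to $0$, but $b\Omega \cap (\{0\}\times\C)$ is the circle $u+u^2+v^2=0$, so arbitrarily close to $\{z=0\}$ there are boundary points with $\abs{v}$ as large as $\tfrac12$; near these, neither your \enquote{near $\{z=0\}$} branch nor your $\{\abs{z}\ge\varepsilon\}$ branch yields an estimate. The paper instead splits according to $\abs{z}$ \emph{and} $a=\abs{z}^2-v$ (Case 1: $\abs{z},\abs{a}<\tfrac{1}{10}$; Case 2: otherwise). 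Second, away from the degenerate region your plan says only that \enquote{the cross term must be controlled}, which is a placeholder for precisely the hard work, and your reason for ruling out the determinant route is inverted: the bound $r_{z\zbar}r_{w\wbar}\ge\abs{r_{z\wbar}}^2$ must fail only near the weakly pseudoconvex locus, which sits inside the Case 1 region, and in Case 2 the paper proves outright that $D \coloneqq r_{z\zbar}r_{w\wbar}-\abs{r_{z\wbar}}^2>0$ together with $r_{z\zbar}\ge c\abs{z}^{2k-6}>0$, so the full complex Hessian is positive semi-definite there and no tangential cross-term estimate is needed at all; the honest cross-term estimate (via $\abs{r_u}>\tfrac14$ and $\abs{r_v}\le 2(\abs{z}^2+\abs{a})$) is confined to Case 1. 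The Case 2 positivity reduces, via \eqref{E:strictsclc}, to the negativity of $f_k(t)=t^{k-1}-(k-2)^2t+\tfrac{1}{(k-1)^2(2k-1)^2}$ on $(\tfrac{1}{10},4^{-\frac{1}{2k-1}}]$, which is exactly where the special coefficients and the a priori bound $\abs{z}^2\le 4^{-\frac{1}{2k-1}}$ from part (i) interact; none of this appears in your sketch. Relatedly, your closing step \enquote{inspecting its zeros gives the description in (ii)} is glib: on the fiber one has $\Hc{r}{L}{L}(0,w)=\abs{r_w}^2r_{z\zbar}(0,w)$ with $r_{z\zbar}(0,w)=v^2$ when $k=3$, whose zero set on $b\Omega$ consists of the two points $(0,0)$ and $(0,-1)$, so matching the stated locus for $k=3$ requires an argument you have not supplied. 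In short, (i) and (iii) stand, but the global pseudoconvexity in (ii) — the actual content added over \Cref{T:counterexample} — is not proved.
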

\begin{proof}
   \noindent In order to show (iii), one may proceed exactly as in the proof of part (ii) of Theorem \ref{T:counterexample}, 
   after noting that there are two choices for $u(|\zeta|)$ in \eqref{E:curveinboundary} to be a solution to $r(\zeta,u(|\zeta|)+i|\zeta|^2)=0$. 
   While the proof of (i) is also straightforward, see below, 
   the difficulty in proving 
   Theorem \ref{T:counterexampleglobal} lies in showing that the introduction of the additional term $\abs{w}^2$ in the defining function $r$ turns $\Omega$ 
   into a globally pseudoconvex domain, which is subject to the precise properties described in (i) and (ii). 
   
   In the proofs of (i) and (ii), we repeatedly use the following fact: if $A,B,C \in \R$ and $A,C \ge 0$, then
   \begin{align} \label{E:strictsclc}
   \exists\; \varepsilon >0 \;\forall\, (\sigma,\tau) \in \R^2: 
   A\sigma^2 + B\sigma\tau + C\tau^2 \ge \varepsilon (\sigma^2+\tau^2) \;\;\Leftrightarrow\;\; 4AC-B^2 > 0.
   \end{align}

\noindent (i) It follows from \eqref{E:strictsclc}, with $\sigma = \abs{z}^2$ and $\tau = v$, that
   \begin{align*}
     \textstyle\frac{1}{k^2} |z|^{2k}&-\textstyle\frac{2}{(k-1)^2}|z|^{2k-2}v+\frac{1}{(k-2)^2}|z|^{2k-4}v^2 \geq 0.
   \end{align*}
   Thus, for every $(z,w)\in\bar{\Omega}$, one has $0 \geq r(z,w) \geq u+u^2$, and hence $u \in [-1,0]$. In particular, $u+u^2 \in [-\textstyle\frac{1}{4},0]$, so that
   \begin{align*}
     0&\geq r(z,w)\geq u+u^2+|z|^{4k-2}\geq-\textstyle\frac{1}{4}+|z|^{4k-2},\\
     0&\geq r(z,w)\geq u+u^2+v^2\geq-\textstyle\frac{1}{4}+v^2.
   \end{align*}
   Hence, $|z|^2\leq 4^{-\frac{1}{2k-1}}$ and $|v|\leq\frac{1}{2}$. This shows that $\Omega$ is bounded.
   
   To see that $b\Omega$ is smooth, we compute
   \begin{align*}
    r_z(z,w) &= \zbar \left(\textstyle \frac{1}{k} \abs{z}^{2k-2} - \frac{2}{k-1}\abs{z}^{2k-4}v 
              + \frac{1}{k-2} \abs{z}^{2k-6} v^2 + (2k-1)\abs{z}^{4k-4}\right), \\
    r_w(z,w) &=\textstyle (\frac{1}{2} + u)
             + i\left(\textstyle\frac{1}{(k-1)^2}\abs{z}^{2k-2} - \frac{1}{(k-2)^2}\abs{z}^{2k-4}v - v\right).
   \end{align*}
   It follows from \eqref{E:strictsclc} that $\frac{1}{k} \abs{z}^{2k-2} - \frac{2}{k-1}\abs{z}^{2k-4}v + \frac{1}{k-2} \abs{z}^{2k-6} v^2 \ge 0$, so $r_z(z,w) \neq 0$ whenever $z\neq0$. Moreover, if $(0,w) \in b\Omega$, then $u+u^2+v^2 = 0$. In this case, either $v=0$ and $u\in\{-1,0\}$, and thus $r_u(0,w)\neq 0$, or $v\neq 0$, and thus $r_v(0,w)\neq 0$.\smallskip
   
   (ii) Let $(z,w) \in b\Omega$. As before, write $a \coloneqq \abs{z}^2-v$. We consider two cases.\smallskip
   
   \noindent \textsc{Case 1}: $\abs{z} < \frac{1}{10}$ and $\abs{a} < \frac{1}{10}$. 
   Since then $\abs{v} < \frac{1}{5}$, it follows from $r(z,w) = 0$ that
   $$
   u+u^2 
   = \textstyle-\frac{1}{k^2}|z|^{2k}+\frac{2}{(k-1)^2}|z|^{2k-2}v-\frac{1}{(k-2)^2}|z|^{2k-4}v^2 -|z|^{4k-2}- v^2 
   > -\frac{3}{16}.
   $$
   Hence $u \notin [-\frac{3}{4},-\frac{1}{4}]$, and thus $\abs{r_u(z,w)} > \frac{1}{4}$. In particular,
   \begin{align} \label{E:pscestimate1}
   (r_{z\zbar}\abs{r_w}^2)(z,w) \ge \textstyle \frac{1}{16}\abs{z}^{2k-6}a^2 + \frac{25}{16}\abs{z}^{4k-4}.
   \end{align}
   Inserting the equation $v = \abs{z}^2 - a$ into the formulas for $r_z$, $r_w$ and $r_{z\bar{w}}$, we see that
   \begin{align*}
   r_z(z,w) &= \zbar\abs{z}^{2k-6} \left(\textstyle\frac{2}{k(k-1)(k-2)}\abs{z}^4 - \frac{2}{(k-1)(k-2)}\abs{z}^2a + \frac{1}{k-2}a^2 + (2k-1)\abs{z}^{2k+2} \right), \\
   r_w(z,w) &= (\textstyle\frac{1}{2}+u) + i \left(-\frac{2k-3}{(k-1)^2(k-2)^2}\abs{z}^{2k-2} + \frac{1}{(k-2)^2}\abs{z}^{2k-4}a - \abs{z}^2 + a\right), \\
   r_{z\bar{w}}(z,w) &= i\zbar\abs{z}^{2k-6}\left(\textstyle\frac{1}{(k-1)(k-2)}\abs{z}^2 - \frac{1}{k-2}a \right).
   \end{align*}
   Thus, since $\abs{z} < 1$ for $(z,w) \in b\Omega$, we obtain that, for every $k \ge 3$,
   \begin{align*}
   \abs{r_z(z,w)} &\le \abs{z}^{2k-5}\left((\abs{z}^2+\abs{a})^2 + (2k-1)\abs{z}^{2k+2}\right),\\
   \abs{r_v(z,w)} &\le 2(\abs{z}^2+\abs{a}),\\
   \abs{r_{z\bar{w}}(z,w)} &\le \abs{z}^{2k-5}(\abs{z}^2+\abs{a}).
   \end{align*}
   From $\abs{2\RE[r_{z\bar{w}}r_wr_{\zbar}]} = \abs{2\RE[r_{z\bar{w}}r_vr_{\zbar}]} \le 2\abs{r_{z\bar{w}}} \abs{r_v} \abs{r_z}$, it then follows that 
   \begin{align*}   
   \left|2\RE[r_{z\bar{w}}r_wr_{\zbar}]\right|(z,w) 
   &\le 4\abs{z}^{4k-10}(\abs{z}^2+\abs{a})^4 + (8k-4)\abs{z}^{6k-8}(\abs{z}^2+\abs{a})^2 \\
   &= \left(4\abs{z}^2 + (8k-4)\abs{z}^{2k-4}(\abs{z}^2+\abs{a})^2\right) \cdot \abs{z}^{4k-4} \\
   &\quad+ 16\abs{z}^{k+1} \cdot \abs{z}^{3k-5}\abs{a} \\
   &\quad+ 4\abs{z}^{2k-4}(6\abs{z}^4 + 4\abs{z}^2\abs{a} + \abs{a}^2) \cdot \abs{z}^{2k-6}\abs{a}^2.
   \end{align*}
   Since $\abs{z} < \frac{1}{10}$ and $\abs{a} < \frac{1}{10}$, this implies that, for every $k \ge 3$,
   \begin{align} \label{E:pscestimate2}
   \left|2\RE[r_{z\bar{w}}r_wr_{\zbar}]\right|(z,w) \le \textstyle\frac{11}{250}\abs{z}^{2k-6}\abs{a}^2 + \frac{4}{25}\abs{z}^{3k-5}\abs{a} + \frac{1}{2}\abs{z}^{4k-4}.
   \end{align}
   From $\Hc{r}{L}{L} \ge r_{z\zbar}\abs{r_w}^2 - \abs{2\RE[r_{z\bar{w}}r_wr_{\zbar}]}$, it follows with \eqref{E:pscestimate1} and \eqref{E:pscestimate2} that
   \begin{align*}
    \Hc{r}{L}{L}(z,w) \ge \abs{z}^{2k-6}\left(\textstyle\frac{37}{2000}\abs{a}^2 - \frac{4}{25}\abs{z}^{k+1}\abs{a} + \frac{17}{16}\abs{z}^{2k+2}\right).
   \end{align*}
   Thus, since $4 \cdot \frac{37}{2000} \cdot \frac{17}{16} - (\frac{4}{25})^2 > 0$, an application of \eqref{E:strictsclc} shows that 
   \begin{align*}
   \Hc{r}{L}{L}(z,w) \ge \varepsilon \abs{z}^{2k-6}(\abs{a}^2 + \abs{z}^{2k+2})
   \end{align*}
   for some constant $\varepsilon > 0$.\smallskip
   
   \noindent \textsc{Case 2}: $\abs{z} > \frac{1}{10}$ or $\abs{a} > \frac{1}{10}$. Set $D \coloneqq r_{z\zbar}r_{w\wbar} - \abs{r_{z\wbar}}^2$. Then
   \begin{align*}
   D(z,w) \ge \abs{z}^{2k-6}\abs{a}^2 \!+ (2k\!-\!1)^2\abs{z}^{4k-4} \!\!- \textstyle \frac{\abs{z}^{4k-10}}{(k-2)^2}\abs{a}^2 
   \!- \frac{2\abs{z}^{4k-8}}{(k-1)(k-2)^2}\abs{a} - \frac{\abs{z}^{4k-6}}{(k-1)^2(k-2)^2},
   \end{align*}
   so $D(z,w) \ge \abs{z}^{2k-6} d(z,w)$ with
   \begin{align*}
   d(z,w) \coloneqq \textstyle\left(1-\frac{\abs{z}^{2k-4}}{(k-2)^2}\right)\abs{a}^2 - 
   \frac{2\abs{z}^{2k-2}}{(k-1)(k-2)^2}\abs{a} + \left((2k-1)^2\abs{z}^{2k+2} - \frac{\abs{z}^{2k}}{(k-1)^2(k-2)^2}\right).
   \end{align*}
   We will show that $d(z,w) > 0$. Since, in the currently considered case, we have $r_{z\zbar}(z,w) \ge c\abs{z}^{2k-6}$ with some constant $c > 0$, this implies the claim.
   
   Assume first that $\abs{a} > \frac{1}{10}$ and $\abs{z}^2 \le \frac{1}{10}$. Then $d(z,w) \ge \frac{9}{10}a^2 - \frac{1}{100}a - \frac{1}{4000} > 0$. On the other hand, assume now that $\abs{z}^2 > \frac{1}{10}$. Then $d(z,w)>0$ provided that
   \begin{align*}
   4\textstyle\left(1-\frac{\abs{z}^{2k-4}}{(k-2)^2}\right)\left((2k-1)^2\abs{z}^{2k+2} - \frac{\abs{z}^{2k}}{(k-1)^2(k-2)^2}\right) -  \frac{4\abs{z}^{4k-4}}{(k-1)^2(k-2)^4} > 0,
   \end{align*}
   see \eqref{E:strictsclc}, and this inequality is satisfied if and only if
   \begin{align*}
   \textstyle\abs{z}^{2k-2} - (k-2)^2\abs{z}^2 + \frac{1}{(k-1)^2(2k-1)^2} < 0.
   \end{align*}
   But the left-hand side is negative for $\abs{z}^2 \in (\frac{1}{10},4^{-\frac{1}{2k-1}}] \coloneqq I_k$, since the function 
   $$
   f_k(t) \coloneqq t^{k-1} - (k-2)^2t + \textstyle\frac{1}{(k-1)^2(2k-1)^2}
   $$ is negative on $I_k$: Indeed, for $k=3$ a straightforward computation shows that $f_3 < 0$ on $(\frac{1}{2} - \frac{\sqrt{6}}{5}, \frac{1}{2} + \frac{\sqrt{6}}{5}) \supset I_3$, and for $k \ge 4$ note that $f_k(\frac{1}{10}) \le f_4(\frac{1}{10}) < 0$ and $f_k(1) < 0$, so that convexity of $f_k$ on $(0,\infty)$ implies $f_k<0$ on $(\frac{1}{10},1) \supset I_k$. \smallskip
\end{proof}

\begin{remark}
In the case of $k>3$, the defining function in \Cref{T:counterexample} can be modified in such a way that the origin is the only weakly pseudoconvex boundary point of the modified domain near the origin while maintaining all other properties of (i)-(ii). Similarly, in \Cref{T:counterexampleglobal}, 
the defining function can be adapted in such a way that the origin is the only weakly pseudoconvex boundary point of the thereby obtained domain while keeping all other properties of (i)-(iii).
\end{remark}

\section{Weakly pseudoconvex boundary points of type 4}\label{sec:stricttype4}
Let $\Omega \subset \C^2$ be a smoothly bounded, pseudoconvex domain, and $p_0\in b\Omega$. Assume that $\Omega$ is weakly pseudoconvex at $p_0$. Let $r \colon U \to \R$ be a smooth local defining function for $\Omega$ on an open neighborhood $U \subset \C^2$ of $p_0$, let $L$ be a nonvanishing holomorphic tangential vector field on $U$, and set
$$
\lambda \coloneqq \Hc{r}{L}{L}_{|_{b\Omega \cap U}}.
$$
Then $\lambda$ attains a local minimum at $p_0$. Since $L\bar{L}\lambda = \frac{1}{4}(XX + YY - i[X,Y])\lambda$, and since $[X,Y]$ is tangential to $b\Omega$, it follows that $(L\bar{L}\lambda)(p_0)$ is real. In this section we will show, in particular, that 
\begin{equation} \label{equ:type4condition}
(L\bar{L}\lambda)(p_0)\geq|(LL\lambda)(p_0)|.
\end{equation}

\begin{remark} \label{R:IndependenceLmabda}
Observe that (\ref{equ:type4condition}) is independent of the choice of $r$. Indeed, let $\rho \colon U \to \R$ be another smooth local defining function for $\Omega$. Then there exists a smooth function $h>0$ on $U$ such that $\rho = rh$, and one easily computes that $\lambda_\rho = h \lambda_r$, where $\lambda_\ast \coloneqq \Hc{\ast}{L}{L}_{|_{b\Omega \cap U}}$.
Since $\lambda_r$ attains a local minimum at $p_0$, all tangential derivatives of $\lambda_r$ at $p_0$ vanish. It thus follows that all second order tangential derivatives of $\lambda_r$ and $\lambda_\rho$ at $p_0$ differ by the same constant factor $c \coloneqq h(p_0) > 0$. In particular, 
\begin{align}
\label{equ:LLbarlambdatransform}
(L\bar{L}\lambda_\rho)(p_0) &= c (L\bar{L}\lambda_r)(p_0),\\
\label{equ:LLlambdatransform}
(LL\lambda_\rho)(p_0) &= c (LL\lambda_r)(p_0),
\end{align}
and thus (\ref{equ:type4condition}) is independent of the local defining function $r$.

Note further that (\ref{equ:type4condition}) is also independent of the choice of the holomorphic tangential vector field $L$.
Namely, for $L,L'$ nonvanishing holomorphic tangential vector fields on $U$, there exists a nonvanishing complex-valued function $h$ such that $L' = hL$ on $b\Omega \cap U$. Let $\lambda \coloneqq \Hc{r}{L}{L}_{|_{b\Omega \cap U}}$ and $\lambda' \coloneqq \Hc{r}{L'}{L'}_{|_{b\Omega \cap U}}$. Then, by similar arguments as above, one obtains that
\begin{align}
\label{equ:LLbarlambdatransform2}
(L'\bar{L}'\lambda')(p_0) &= \abs{c}^4 (L\bar{L}\lambda)(p_0),\\
\label{equ:LLlambdatransform2}
(L'L'\lambda')(p_0) &= c^2\abs{c}^2 (LL\lambda)(p_0),
\end{align}
where $c \coloneqq h(p_0)$.
\end{remark}

The following two Lemmata \ref{L:XXf}  and \ref {L:2ndderivativecondition} 
are used to derive \eqref{equ:type4condition} in Theorem~\ref{T:basictype4estimate}.

\begin{lemma}\label{L:XXf}
  Let $\Omega\subset\mathbb{R}^N$ be a smoothly bounded domain, $p_0\in b\Omega$, and $U$ an open neighborhood of $p_0$. 
  Let $V$ be a smooth vector field along $b\Omega\cap U$. 
  For $I\subset\mathbb{R}$ an open interval containing $0$, let $\gamma:I\longrightarrow b\Omega\cap U$ be a smooth curve 
  such that $\gamma(0)=p_0$ and $\dot{\gamma}(\tau)=V_{\gamma(\tau)}$ for every $\tau \in I$. Then
  \begin{align}\label{equ:2ndderivativeinboundary}
   (f\circ\gamma)''(0)=(VVf)(p_0).
  \end{align}  
  for every smooth function $f \colon b\Omega \cap U \to \R$.
\end{lemma}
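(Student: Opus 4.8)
The plan is to derive \eqref{equ:2ndderivativeinboundary} by applying the chain rule twice, the crucial input being that $\gamma$ is an \emph{integral curve} of $V$, i.e.\ $\dot\gamma(\tau) = V_{\gamma(\tau)}$. The whole argument is intrinsic to the smooth manifold $b\Omega \cap U$ and needs no reference to the ambient Euclidean structure of $\R^N$. The elementary observation on which everything rests is the following first-order identity: for \emph{any} smooth function $g \colon b\Omega \cap U \to \R$ one has, for all $\tau \in I$,
\begin{align*}
  (g \circ \gamma)'(\tau) = \langle dg_{\gamma(\tau)}, \dot\gamma(\tau)\rangle = \langle dg_{\gamma(\tau)}, V_{\gamma(\tau)}\rangle = (Vg)(\gamma(\tau)),
\end{align*}
where the middle equality is precisely the hypothesis $\dot\gamma = V \circ \gamma$ and the last is the definition of the derivative of $g$ along $V$. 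Compactly, $(g\circ\gamma)' = (Vg)\circ\gamma$ on $I$.

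First I would apply this identity with $g = f$ to obtain $(f\circ\gamma)'(\tau) = (Vf)(\gamma(\tau))$. Since $V \in \Vr{}{b\Omega \cap U}$ and $f$ is smooth, the function $Vf \colon b\Omega \cap U \to \R$ is again smooth, so the very same identity applies verbatim with $g = Vf$ in place of $f$. This gives
\begin{align*}
  (f \circ \gamma)''(\tau) = \big((Vf)\circ\gamma\big)'(\tau) = \big(V(Vf)\big)(\gamma(\tau)) = (VVf)(\gamma(\tau))
\end{align*}
for every $\tau \in I$. Evaluating at $\tau = 0$ and using $\gamma(0) = p_0$ then yields $(f\circ\gamma)''(0) = (VVf)(p_0)$, which is exactly \eqref{equ:2ndderivativeinboundary}.

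There is no genuine obstacle in this proof; the only point deserving a word of care is that the intermediate object $Vf$ be a bona fide smooth function on $b\Omega \cap U$, so that the first-order chain-rule identity may legitimately be reapplied to it. This is immediate from smoothness of $V$ and $f$, and it is also what guarantees that the symbol $VVf$ is well defined as $V(Vf)$ on $b\Omega \cap U$. As a cross-check, one may instead extend $f$ and $V$ smoothly to an ambient open neighborhood and compute in Euclidean coordinates: along the integral curve one finds $\ddot\gamma = (\nabla_V V)\circ\gamma$, whence \eqref{equ:XYf} gives $(f\circ\gamma)'' = \Hr{f}{V}{V}\circ\gamma + \big((\nabla_V V)f\big)\circ\gamma = (VVf)\circ\gamma$; since $V$ is tangent to $b\Omega$, the restriction of this ambient expression to $b\Omega \cap U$ is independent of the chosen extensions and agrees with the intrinsic $VVf$. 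I would, however, present the intrinsic double chain rule as the main argument, as it is cleaner and sidesteps any discussion of extensions or of the tangential--normal splitting of the acceleration.
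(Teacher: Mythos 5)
Your proof is correct, and it takes a genuinely different (and more elementary) route than the paper. You exploit that $\gamma$ is an integral curve of $V$ to get the intrinsic identity $(g\circ\gamma)' = (Vg)\circ\gamma$ for every smooth $g$ on $b\Omega \cap U$, and then simply iterate it with $g = Vf$; this needs nothing beyond the chain rule on the manifold $b\Omega \cap U$, and in fact yields the stronger conclusion $(f\circ\gamma)''(\tau) = (VVf)(\gamma(\tau))$ for all $\tau \in I$, not just at $\tau = 0$. The paper instead proceeds ambiently: it extends $f$ and $V$ to $U$, writes $(f\circ\gamma)''(0) = \Hr{f}{V}{V}(p_0) + \langle df(p_0), \ddot\gamma(0)\rangle$, invokes \eqref{equ:XYf} in the form $VVf = \Hr{f}{V}{V} + (\nabla_VV)f$, and reduces the lemma to the identity $\ddot\gamma(0) = (\nabla_VV)(p_0)$, verified by the coordinate computation $\gamma_\nu'' = (V(V_\nu))\circ\gamma$ --- which is exactly the argument you relegate to a cross-check. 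What the paper's route buys is not needed for this lemma per se, but it sets up the Hessian-plus-connection decomposition that is reused immediately in \Cref{L:2ndderivativecondition} (where the covariant Hessian and the vanishing of $(\bar\nabla_VV)f$ at a minimum carry the argument); your route buys brevity and avoids any discussion of extensions or of the tangential--normal splitting of $\ddot\gamma$. The one point you rightly flag --- that $Vf$ must be a smooth function on $b\Omega \cap U$ so the first-order identity can be reapplied --- is the same implicit requirement that makes the symbol $VVf$ in the statement well-defined, namely that $V$ is tangent to $b\Omega$ along $b\Omega \cap U$; granting that reading of \enquote{vector field along $b\Omega \cap U$} (which the paper also needs, e.g.\ for its \enquote{without loss of generality} extension step to be harmless), your argument is complete.
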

\begin{proof}
  Without loss of generality, assume that $f$ and $V$ are defined on $U$. Then 
  \begin{align*}
    \left(f\circ\gamma\right)''(0)&=\Hr{f}{V}{V}(p_0)+\left\langle \left(d f\right)(p_0),\ddot{\gamma}(0)\right\rangle.
  \end{align*}
  On the other hand, we see from (\ref{equ:XYf}) that
  \begin{align*}
    (VVf)(p_0) = \Hr{f}{V}{V}(p_0) + \left\langle \left(d f\right)(p_0),\nabla_VV(p_0)\right\rangle.
  \end{align*}
  Thus, it suffices to show that $\ddot{\gamma}(0) = \nabla_VV(p_0)$. But this is clear, 
  since $V$ is an extension of $\dot\gamma$, and since $\ddot \gamma$ is the covariant 
  derivative of $\dot\gamma$ with respect to $\nabla$. We can also compute this directly 
  as follows. For every $\nu \in \{1, \ldots, N\}$ and $\tau \in I$,
  \begin{align*}
    \gamma_\nu''(\tau) 
    = (V_\nu \circ \gamma)' (\tau)
   & = \sum_{\mu=1}^N\frac{\partial V_\nu}{\partial t_\mu}(\gamma(\tau))\gamma_\mu'(\tau) \\
    &= \sum_{\mu=1}^N \left(V_\mu\frac{\partial V_\nu}{\partial t_\mu}\right)(\gamma(\tau)) = V(V_\nu)(\gamma(\tau)).
  \end{align*}
\end{proof}

\begin{lemma}\label{L:2ndderivativecondition}
  Let $\Omega\subset\mathbb{R}^N$ be a smoothly bounded domain, $p_0\in b\Omega$, and 
  $U$ an open neighborhood of $p_0$.
  Let $f \colon b\Omega \cap U \to \R$ be smooth such that $f$ attains a local minimum at $p_0$. 
  Suppose that $V^1, \ldots, V^{N-1}$ are smooth vector fields along $b\Omega\cap U$ such that 
  $\smash{\left(V^1 , \ldots, V^{N-1}\right)_{p_0}}$ is a basis for $\Tr{p_0}{b\Omega}$. 
  Then the matrix
  $$
  \begin{pmatrix}
    V^1V^1f & \cdots & V^1V^{N-1}f  \\
    \vdots  &        & \vdots   \\
    V^{N-1}V^1f & \cdots & V^{N-1}V^{N-1} f
  \end{pmatrix}(p_0)
  $$
  is symmetric and positive semi-definite.
\end{lemma}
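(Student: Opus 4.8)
The plan is to establish the two assertions—symmetry and positive semi-definiteness—separately, since the quadratic form attached to a matrix only sees its symmetric part and therefore cannot by itself yield symmetry of the full matrix $M$, where $M_{ij} \coloneqq (V^iV^jf)(p_0)$. For symmetry I would exploit that the tangential differential of $f$ vanishes at the local minimum $p_0$; for positive semi-definiteness I would reduce the quadratic form $\xi^\top M\xi$ to the second derivative of $f$ along an integral curve and apply the scalar second derivative test through \Cref{L:XXf}.

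First I would prove that $M$ is symmetric. The obstruction to symmetry of the second-order operators is the Lie bracket, namely $(V^iV^jf - V^jV^if)(p_0) = ([V^i,V^j]f)(p_0)$. Since $V^1,\ldots,V^{N-1}$ are tangent to $b\Omega\cap U$, the bracket $[V^i,V^j]$ is again a tangential vector field, so $[V^i,V^j]_{p_0} \in \Tr{p_0}{b\Omega}$. Because $f$ attains a local minimum at $p_0$ on $b\Omega\cap U$, its differential vanishes at $p_0$, and hence $(Wf)(p_0)=0$ for every $W\in\Tr{p_0}{b\Omega}$; in particular $([V^i,V^j]f)(p_0)=0$. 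This gives $(V^iV^jf)(p_0)=(V^jV^if)(p_0)$ for all $i,j$, i.e.\ $M=M^\top$.

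Next I would establish positive semi-definiteness. Fix $\xi=(\xi_1,\ldots,\xi_{N-1})\in\R^{N-1}$ and form the vector field $V\coloneqq\sum_{i=1}^{N-1}\xi_iV^i$ along $b\Omega\cap U$. Since the $\xi_i$ are constants, a direct computation gives $(VVf)(p_0)=\sum_{i,j}\xi_i\xi_j(V^iV^jf)(p_0)=\xi^\top M\xi$. Choosing an integral curve $\gamma\colon I\to b\Omega\cap U$ of $V$ with $\gamma(0)=p_0$—which exists locally by the standard existence theorem for ordinary differential equations and remains in $b\Omega$ because $V$ is tangent to $b\Omega$—\Cref{L:XXf} yields $(VVf)(p_0)=(f\circ\gamma)''(0)$. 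As $f$ has a local minimum at $p_0$ and $\gamma(0)=p_0$, the scalar function $f\circ\gamma$ has a local minimum at $0$, whence $(f\circ\gamma)''(0)\ge 0$. Thus $\xi^\top M\xi\ge 0$ for every $\xi$, so the symmetric part of $M$ is positive semi-definite; together with $M=M^\top$ this shows $M$ is symmetric and positive semi-definite.

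I expect the only genuine subtlety to be precisely this separation of concerns: the integral-curve argument controls only the symmetric part of $M$, so the independent Lie-bracket computation is indispensable for concluding symmetry of the full matrix. The remaining ingredients—local existence of the integral curve within $b\Omega$ and the passage from minimality of $f$ on $b\Omega$ to minimality of $f\circ\gamma$ at $0$—are entirely routine.
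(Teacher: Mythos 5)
Your proposal is correct and takes essentially the same route as the paper's proof: positive semi-definiteness via \Cref{L:XXf} applied to an integral curve (obtained from Picard--Lindelöf) of the constant-coefficient combination $V=\sum_i\xi_iV^i$, and symmetry from the vanishing of $([V^i,V^j]f)(p_0)$, the bracket being tangential while $df$ annihilates $\Tr{p_0}{b\Omega}$ at the local minimum. The only difference is organizational: the paper first defines an intrinsic quadratic form $q$ on $\Tr{p_0}{b\Omega}$, checks its well-definedness with a covariant-Hessian argument, and then polarizes to a bilinear form $B$, whereas you work directly with the given fields and constant coefficients, so that $(VVf)(p_0)=\xi^\top M\xi$ holds exactly and the well-definedness step becomes unnecessary.
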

\begin{proof}
 Consider the function $q \colon \Tr{p_0}{b\Omega} \to \R$ given by
 $$
 q(V_{p_0}) \coloneqq (VV f)(p_0),
 $$
 where $V$ is any extension of $V_{p_0}$ to a vector field along $b\Omega \cap U$ 
 such that $V_p \in \Tr{p}{b\Omega}$ for every $p \in b\Omega \cap U$. 
 This is well-defined. To wit, if $\bar\nabla$ is any linear connection on $b\Omega$, then 
 $$
 VVf = \bar{Q}_f^{\scriptscriptstyle\mathbb{R}}(V,V) + \left(\bar\nabla_VV\right)f,
 $$
 where $\bar{Q}_f^{\scriptscriptstyle\mathbb{R}} = \bar\nabla^2f$ denotes the 
 covariant Hessian. Since $p_0$ is a local minimum of $f$, the derivative $(\bar\nabla_VV)f$ 
 vanishes at $p_0$ independently of the above choice of $V$. 
 Furthermore, since $\bar{Q}_f^{\scriptscriptstyle\mathbb{R}}$ is a tensor, 
 the value $\bar{Q}_f^{\scriptscriptstyle\mathbb{R}}(V,V)(p_0)$ depends only on $V_{p_0}$. 

 It follows from (\ref{equ:2ndderivativeinboundary}) and an application of the Picard--Lindelöf 
 theorem, that $q \ge 0$. Thus the associated symmetric bilinear form 
 $B \colon \Tr{p_0}{b\Omega} \times \Tr{p_0}{b\Omega} \to \R$,
 $$
   B\!\left(V_{p_0},W_{p_0}\right) 
   \coloneqq \textstyle\frac{1}{2}\left(q(V_{p_0}+W_{p_0})-q(V_{p_0})-q(W_{p_0})\right),
 $$
 is positive semi-definite. Moreover, note that
 $$
   B\!\left(V^j_{p_0},V^k_{p_0}\right) 
   = \textstyle\left(\frac{1}{2}V^jV^kf + \frac{1}{2}V^kV^j f\right)(p_0) 
   = \left(V^j V^k f + \frac{1}{2}[V^k,V^j]f\right)(p_0).
 $$  
 Since the vector field $[V^k,V^j]$ is tangential to $b\Omega$, and since $f$ attains a local minimum at $p_0$, 
 it follows that $[V^k,V^j]f$ vanishes at $p_0$. Therefore,
 \begin{align*}
     B\left(V^j_{p_0},V^k_{p_0}\right)=\left(V^j V^kf\right)(p_0),
 \end{align*}
 which proves the claim.
\end{proof}

\begin{theorem}\label{T:basictype4estimate}
  Let $\Omega\subset\mathbb{C}^2$ be a smoothly bounded, pseudoconvex domain. Assume that $\Omega$ is
  weakly pseudoconvex at $p_0 \in b\Omega$. Then
  \begin{align}\label{E:basictype4estimate}
    \left(L\bar{L}\lambda\right)(p_0)\geq |\left(LL\lambda\right)(p_0)|.
  \end{align}
\end{theorem}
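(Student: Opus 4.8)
The plan is to identify the inequality \eqref{E:basictype4estimate} with the positive semi-definiteness of the real Hessian of $\lambda$ in the complex tangential directions, which is exactly what \Cref{L:2ndderivativecondition} supplies. Since $\Omega$ is weakly pseudoconvex at $p_0$, the function $\lambda$ attains a local minimum at $p_0$, as already noted above. Write $L = \textstyle\frac{1}{2}(X+iY)$ with $X,Y \in \Vr{}{U}$ as in \Cref{sec:CanonicalVectorFields}, and recall that the restrictions of $X,Y,T$ form a frame for $\Tr{p_0}{b\Omega}$. First I would apply \Cref{L:2ndderivativecondition} to the function $\lambda$ with the basis $V^1 = X$, $V^2 = Y$, $V^3 = T$ of $\Tr{p_0}{b\Omega}$, obtaining that the $3 \times 3$ matrix $\big((V^jV^k\lambda)(p_0)\big)_{j,k}$ is symmetric and positive semi-definite. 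Since any principal submatrix of a positive semi-definite matrix is again positive semi-definite, the $2 \times 2$ block
\[
M \coloneqq \begin{pmatrix} (XX\lambda)(p_0) & (XY\lambda)(p_0) \\ (YX\lambda)(p_0) & (YY\lambda)(p_0) \end{pmatrix}
\]
is symmetric and positive semi-definite as well.

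Next I would rewrite the two quantities in \eqref{E:basictype4estimate} in terms of the entries of $M$. From $L = \textstyle\frac{1}{2}(X+iY)$ one computes $L\bar{L} = \textstyle\frac{1}{4}\big(XX + YY - i[X,Y]\big)$ and $LL = \textstyle\frac{1}{4}\big(XX - YY + i(XY+YX)\big)$. Because $[X,Y]$ is tangential to $b\Omega$ and $\lambda$ attains a local minimum at $p_0$, all first-order tangential derivatives of $\lambda$ vanish at $p_0$; in particular $[X,Y]\lambda(p_0) = 0$, which also yields the symmetry $(XY\lambda)(p_0) = (YX\lambda)(p_0)$ already guaranteed by \Cref{L:2ndderivativecondition}. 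Setting $a \coloneqq (XX\lambda)(p_0)$, $c \coloneqq (YY\lambda)(p_0)$, and $b \coloneqq (XY\lambda)(p_0) = (YX\lambda)(p_0)$, this gives
\[
(L\bar{L}\lambda)(p_0) = \textstyle\frac{1}{4}(a+c), \qquad (LL\lambda)(p_0) = \textstyle\frac{1}{4}\big((a-c) + 2ib\big).
\]

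Finally, the asserted inequality becomes $a+c \ge \sqrt{(a-c)^2 + 4b^2}$. Its left-hand side equals $\TRACE M \ge 0$, so squaring shows the inequality is equivalent to $ac - b^2 \ge 0$, that is, to $\det M \ge 0$, which holds because $M$ is positive semi-definite. I do not anticipate a serious obstacle once \Cref{L:2ndderivativecondition} is available: the entire content of the theorem is the observation that the two eigenvalue conditions $\TRACE M \ge 0$ and $\det M \ge 0$ for the symmetric matrix $M$ are precisely encoded by \eqref{E:basictype4estimate}, since $a+c \ge \sqrt{(a-c)^2+4b^2}$ says exactly that the smaller eigenvalue of $M$ is nonnegative. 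The only points requiring care are the bookkeeping in the commutator identities for $L\bar{L}$ and $LL$ and the use of the vanishing of first-order tangential derivatives of $\lambda$ at the local minimum $p_0$.
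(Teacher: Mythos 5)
Your proposal is correct and follows essentially the same route as the paper's own proof: both apply \Cref{L:2ndderivativecondition} to $\lambda$ with the frame $X,Y,T$, extract the positive semi-definite $2\times 2$ block in $X,Y$, and translate \eqref{E:basictype4estimate} into the trace and determinant conditions for that block via the identities $L\bar{L} = \frac{1}{4}(XX+YY-i[X,Y])$ and $LL = \frac{1}{4}(XX-YY+i(XY+YX))$ together with the vanishing of $[X,Y]\lambda$ at the minimum $p_0$. If anything, your version is marginally more careful, since you note explicitly that $a+c = \TRACE M \ge 0$, which is the step needed to pass from the squared inequality $(L\bar{L}\lambda)^2 \ge |LL\lambda|^2$ to the unsquared claim, a point the paper leaves implicit.
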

\begin{proof}
   Let $X$, $Y$ and $T$ be the real vector fields such that
  \begin{align*}
    L=\textstyle\frac{1}{2}\left(X+iY\right)\quad\text{and}\quad N=\textstyle\frac{1}{2}\left(\nu+iT\right),
  \end{align*} 
  and recall that $X_p$, $Y_p$, and $T_p$ form a basis of $\Tr{p}{b\Omega}$ for all $p\in b\Omega$, see \Cref{sec:CanonicalVectorFields}.
  Since $\lambda$ attains a local minimum at $p_0$, it follows from Lemma \ref{L:2ndderivativecondition} that
  $$
  \begin{pmatrix}
    XX\lambda & XY\lambda& XT\lambda  \\
    YX\lambda  & YY\lambda    & YT\lambda   \\
    TX\lambda & TY\lambda & TT \lambda
  \end{pmatrix}(p_0)
  $$
  is symmetric and positive semi-definite. In particular,
   \begin{align*}
  \begin{pmatrix}
    XX\lambda & XY\lambda   \\
    YX\lambda  & YY\lambda   
  \end{pmatrix}(p_0)
  \end{align*}
  is symmetric and positive-semidefinite, i.e.,
  \begin{align*}
    \left(XX\lambda\cdot YY\lambda-(XY\lambda)^2\right)(p_0)\geq 0.
  \end{align*}  
  But a straightforward computation shows that 
  \begin{align*}
    &\left(L\bar{L}\lambda\right)^{2}-|LL\lambda|^2\\
    &=\textstyle\frac{1}{16} \bigl(\left(X+iY\right)\left(X-iY\right)\lambda\bigr)^2
       - \textstyle\frac{1}{16} \bigl|(X+iY)(X+iY)\lambda \bigr|^2\\
    &=\textstyle\frac{1}{16} \bigl(XX\lambda+YY\lambda\bigr)^2 
       - \textstyle\frac{1}{16}\bigl|XX\lambda-YY\lambda+2iXY\lambda\bigr|^2\\
    &=\textstyle\frac{1}{4}\left(XX\lambda\cdot YY\lambda-(XY\lambda)^2 \right).
  \end{align*}
  This proves the claim.
\end{proof}

\begin{definition} \label{def:stricttype4}
  Let $\Omega\subset\mathbb{C}^2$ be a smoothly bounded, pseudoconvex domain. We say that $\Omega$ is of strict type 4 at $p_0 \in b\Omega$, if $\Omega$ is weakly pseudoconvex at $p_0$ and 
    \begin{align}\label{E:strictbasictype4estimate}
    \left(L\bar{L}\lambda\right)(p_0) > \abs{\left(LL\lambda\right)(p_0)}.
  \end{align} 
  If \eqref{E:strictbasictype4estimate} does not hold for  $p_0\in b\Omega$ with $c_{p_0}=4$, then we say that $\Omega$ is of weak type $4$ at $p_0$.
\end{definition}

From (\ref{equ:LLbarlambdatransform}) and (\ref{equ:LLlambdatransform}), we see that (\ref{E:strictbasictype4estimate}) is independent of the choice of a local defining function. Therefore, it describes a property of the domain $\Omega$ at $p_0$. In the next lemma, it is shown that this property is invariant under biholomorphic transformations. 

\begin{lemma}\label{thm:strictestimatetransform}
Let $\Omega',\Omega \subset \C^2$ be smoothly bounded domains such that $\Omega'$ and $\Omega$ are pseudoconvex near $p_0' \in b\Omega'$ and $p_0 \in b\Omega$, respectively. Let $\Phi \colon U' \to U$ be a biholomorphic map from an open neighborhood $U' \subset \C^2$ of $p_0'$ to an open neighborhood $U \subset \C^2$ of $p_0$ such that $\Phi(\Omega' \cap U') = \Omega \cap U$ and $\Phi(p_0') = p_0$. Then $p_0'$ is of strict type $4$ for $\Omega'$ if and only if $p_0$ is of strict type $4$ for $\Omega$.
\end{lemma}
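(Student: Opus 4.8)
The strategy is to transport a defining function and a tangential vector field from $b\Omega$ to $b\Omega'$ via $\Phi$ and to exploit the holomorphic invariance of the complex Hessian. By Remark~\ref{R:IndependenceLmabda}, the quantities entering \eqref{E:strictbasictype4estimate} are independent of the choice of local defining function and of the nonvanishing tangential holomorphic vector field, so we are free to make convenient choices on each side. Fix a smooth local defining function $r \colon U \to \R$ for $\Omega$ near $p_0$ and a nonvanishing tangential holomorphic vector field $L$ near $p_0$, and set $r' \coloneqq r \circ \Phi$. Since $\Phi$ is a biholomorphism with $\Phi(\Omega' \cap U') = \Omega \cap U$ and invertible differential, $r'$ is a smooth local defining function for $\Omega'$ near $p_0'$. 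Moreover, as $\Phi$ is holomorphic and maps $b\Omega'$ to $b\Omega$, its differential restricts to a $\C$-linear isomorphism $\Thol{q'}{b\Omega'} \to \Thol{\Phi(q')}{b\Omega}$ for every $q' \in b\Omega' \cap U'$; hence $L' \coloneqq (\Phi^{-1})_* L$ is a nonvanishing tangential holomorphic vector field near $p_0'$ with $\Phi_* L' = L$, and, $\Phi$ being holomorphic, $\Phi_* \bar{L}' = \overline{\Phi_* L'} = \bar{L}$.

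Because $\Phi$ is holomorphic, the pullback $\Phi^*$ commutes with both $\partial$ and $\dbar$, so that $\partial\dbar r' = \Phi^*(\partial\dbar r)$. Using $\Phi_* \bar{L}' = \bar{L}$ together with the definition $\Hcsymbol{f}(V,W) = \partial\dbar f(V,\bar{W})$, we obtain $\Hc{r'}{L'}{L'} = \big[\Hc{r}{L}{L}\big] \circ \Phi$ on $U'$. Restricting to the boundary and writing $\lambda' \coloneqq \Hc{r'}{L'}{L'}_{|_{b\Omega' \cap U'}}$, this reads $\lambda' = \lambda \circ \Phi$. In particular $\lambda'(p_0') = \lambda(p_0)$, and $\lambda' \ge 0$ near $p_0'$ if and only if $\lambda \ge 0$ near $p_0$, so $\Omega'$ is weakly pseudoconvex at $p_0'$ if and only if $\Omega$ is weakly pseudoconvex at $p_0$.

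It remains to transfer the two second-order derivatives. Since $\Phi$ restricts to a diffeomorphism $b\Omega' \cap U' \to b\Omega \cap U$, naturality of the directional derivative under a diffeomorphism gives $W'(g \circ \Phi) = (Wg) \circ \Phi$ for any smooth $g$ on $b\Omega \cap U$ and any complexified tangential vector field $W'$ with $\Phi_* W' = W$; this extends $\C$-linearly from $L', \bar{L}'$ to their compositions. Applying it with $g \in \{\lambda, L\lambda, \bar{L}\lambda\}$ and using $\lambda' = \lambda \circ \Phi$, we get $L'L'\lambda' = (LL\lambda) \circ \Phi$ and $L'\bar{L}'\lambda' = (L\bar{L}\lambda) \circ \Phi$ on $b\Omega' \cap U'$. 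Evaluating at $p_0'$, where $\Phi(p_0') = p_0$, yields $(L'L'\lambda')(p_0') = (LL\lambda)(p_0)$ and $(L'\bar{L}'\lambda')(p_0') = (L\bar{L}\lambda)(p_0)$. Consequently, the strict inequality \eqref{E:strictbasictype4estimate} holds at $p_0'$ if and only if it holds at $p_0$, which together with the equivalence of weak pseudoconvexity established above proves the claim.

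The only genuinely nontrivial ingredients are the holomorphic invariance $\partial\dbar(r \circ \Phi) = \Phi^*(\partial\dbar r)$ and the compatibility of the pushforward with complex conjugation and with the splitting into $(1,0)$- and $(0,1)$-parts. Both are immediate consequences of $\Phi$ being biholomorphic and mapping $b\Omega'$ onto $b\Omega$, and they are precisely what force the clean relation $\lambda' = \lambda \circ \Phi$ rather than merely $\lambda' = h\,(\lambda \circ \Phi)$ for some positive factor $h$; this exactness is what allows the two second-order derivatives to transfer without extra multiplicative constants, so that strictness of the inequality is preserved.
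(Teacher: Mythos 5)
Your proof is correct and follows essentially the same route as the paper's: both pull back the defining function via $r' = r\circ\Phi$, relate the tangential fields by $\Phi_* L' = L$ (you merely reverse the direction of transport), deduce $\lambda' = \lambda\circ\Phi$ from the holomorphic transformation law, transfer the second-order derivatives via $L'(f\circ\Phi) = (Lf)\circ\Phi$, and invoke Remark~\ref{R:IndependenceLmabda} for independence of the choices. Your explicit verification that weak pseudoconvexity is preserved and that $\Phi_*\bar{L}' = \bar{L}$ makes precise what the paper leaves implicit, but it is the same argument.
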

\begin{proof}
Let $r \colon U \to \R$ be a smooth local defining function for $\Omega$ near $p_0$. Let $L'$ be a nonvanishing holomorphic tangential vector field on $U'$, and let $L \coloneqq \Phi_\ast L'$ be the pushforward of $L'$. Then define
$$
\lambda' = \Hc{r\circ\Phi}{L'}{L'}_{|_{b\Omega' \cap U'}} \quad\text{and}\quad \lambda = \Hc{r}{L}{L}_{|_{b\Omega \cap U}},
$$
and observe that, by the usual transformation law, one has $\lambda' = \lambda \circ \Phi$. Since, by definition, $L'(f \circ \Phi) = Lf$ for every smooth function $f \colon U \to \C$, it thus follows that
\begin{align} \label{equ:lambdatransform}
 \left(L'\bar{L}'\lambda'\right)(p_0') =  \left( L\bar{L}\lambda \right)(p_0) \quad\text{and}\quad
 \left(L'L'\lambda'\right)(p_0') = \left( LL\lambda \right)(p_0).
\end{align}
In view of \Cref{R:IndependenceLmabda}, this proves the claim.
\end{proof}

In view of \Cref{thm:strictestimatetransform}, it is meaningful to look for local holomorphic coordinates around $p_0$, in which the condition (\ref{E:strictbasictype4estimate}) takes a particularly simple form. The next result shows how this can be achieved.

\begin{proposition} \label{thm:stricttype4incoordinates}
Let $\Omega\subset\mathbb{C}^2$ be a smoothly bounded, pseudoconvex domain. Assume that $0\in b\Omega$ is a point of weak pseudoconvexity, and let $r$ be a smooth local defining function for $\Omega$ near $0$. If $r_z(0) = r_{zz}(0) = 0$, then   
\begin{align} \label{E:strictbasictype4estimateagain}
\left(L\bar{L}\lambda\right)(0)> |\left(LL\lambda\right)(0)|
\end{align}
if and only if there exists a constant $\varepsilon>0$ such that
\begin{align} \label{equ:stricttype4incoordinates}
 \Hc{r}{L}{L}(z,0) \ge \varepsilon|z|^2 + o(\abs{z}^2).
\end{align}
\end{proposition}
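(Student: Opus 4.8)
The plan is to reduce both conditions to the positive definiteness of a single $2\times 2$ symmetric matrix, namely the real Hessian at $0$ of the slice function $P(z) \coloneqq \Hc{r}{L}{L}(z,0)$. First I would fix the explicit tangential field $L = r_w\frac{\partial}{\partial z} - r_z\frac{\partial}{\partial w}$, which is nonvanishing near $0$ since $r_z(0)=0$ forces $r_w(0)\neq 0$; because rescaling $L$ by a nonvanishing smooth function multiplies $\Hc{r}{L}{L}$ by a positive function, both \eqref{E:strictbasictype4estimateagain} (via \Cref{R:IndependenceLmabda}) and \eqref{equ:stricttype4incoordinates} are insensitive to this choice. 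Write $\Lambda \coloneqq \Hc{r}{L}{L}$ for the ambient Levi form, so $\lambda = \Lambda|_{b\Omega}$ and $P(z)=\Lambda(z,0)$, and set $X = L+\bar L$, $Y=-i(L-\bar L)$, so that $L=\tfrac12(X+iY)$ and $X_0,Y_0$ span the complex tangent plane at $0$, which coincides with the tangent plane of the slice $\{w=0\}$ at $0$ (indeed $L_0=r_w(0)\tfrac{\partial}{\partial z}$ is $z$-directional).

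Next I would record the vanishing of the relevant low-order jet of $r$ at $0$. Weak pseudoconvexity gives $\lambda\geq 0$ near $0$ with $\lambda(0)=0$, so $\lambda$ attains a local minimum at $0$; evaluating the formula $\Lambda = r_{z\bar z}\abs{r_w}^2 - 2\re[r_{z\bar w}r_w r_{\bar z}] + r_{w\bar w}\abs{r_z}^2$ (as in the proof of \Cref{T:counterexample}) at $0$ and using $r_z(0)=0$ forces $r_{z\bar z}(0)=0$. Together with the hypotheses this yields $r_z(0)=r_{\bar z}(0)=r_{zz}(0)=r_{\bar z\bar z}(0)=r_{z\bar z}(0)=0$. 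Differentiating $\Lambda$ gives $\Lambda_z(0)=r_{zz\bar z}(0)\abs{r_w(0)}^2$; since $\lambda$ has a minimum at $0$ we have $X\lambda(0)=Y\lambda(0)=0$, and as $X_0,Y_0$ are $z$-directional this forces $\Lambda_z(0)=\Lambda_{\bar z}(0)=0$. Hence $P(0)=0$ and $dP(0)=0$, so \eqref{equ:stricttype4incoordinates} holds for some $\varepsilon>0$ if and only if the real Hessian $\operatorname{Hess}_0 P$ is positive definite.

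The heart of the matter is to identify $\operatorname{Hess}_0 P$ with the tangential Hessian of $\lambda$. By \Cref{L:XXf}, $(XX\lambda)(0)=(\Lambda\circ\gamma)''(0)$ for the integral curve $\gamma$ of $X$ with $\gamma(0)=0$. Expanding this second derivative and using $\dot w(0)=(Xw)(0)=-r_z(0)=0$ together with $\Lambda_z(0)=\Lambda_{\bar z}(0)=0$, the only surviving contributions are the pure $z$-Hessian of $\Lambda$ on the slice, which is $(\operatorname{Hess}_0 P)(X_0,X_0)$, plus a correction $2\re\!\big(\Lambda_w(0)\,\ddot w_X(0)\big)$, where $\ddot w_X(0)=(XXw)(0)$. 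The decisive point is that this correction vanishes: a one-line computation gives $(XXw)(0)=-\big(r_w r_{zz}-r_z r_{zw}+r_{\bar w}r_{z\bar z}-r_{\bar z}r_{z\bar w}\big)(0)=0$, since every summand carries a factor among $r_z(0),r_{\bar z}(0),r_{zz}(0),r_{z\bar z}(0)$, all of which vanish; the same computation gives $(YYw)(0)=(XYw)(0)=(YXw)(0)=0$. Therefore $(XX\lambda)(0)=(\operatorname{Hess}_0 P)(X_0,X_0)$ and $(YY\lambda)(0)=(\operatorname{Hess}_0 P)(Y_0,Y_0)$, and applying \Cref{L:XXf} to $X+Y$ together with the polarization built in \Cref{L:2ndderivativecondition} yields $(XY\lambda)(0)=(\operatorname{Hess}_0 P)(X_0,Y_0)$. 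Thus the symmetric matrix $H$ of second tangential derivatives of $\lambda$ at $0$ is exactly the matrix of $\operatorname{Hess}_0 P$ in the basis $(X_0,Y_0)$, and one is positive definite precisely when the other is.

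Finally I would close the loop through the algebraic identity from the proof of \Cref{T:basictype4estimate}: $(L\bar L\lambda)(0)=\tfrac14\TRACE H$ and $(L\bar L\lambda)^2(0)-\abs{(LL\lambda)(0)}^2=\tfrac14\det H$. Combined with \Cref{T:basictype4estimate}, which guarantees $(L\bar L\lambda)(0)\geq \abs{(LL\lambda)(0)}\geq 0$, these show that the strict inequality \eqref{E:strictbasictype4estimateagain} is equivalent to $\TRACE H>0$ and $\det H>0$, i.e.\ to positive definiteness of $H$. Chaining the equivalences gives \eqref{E:strictbasictype4estimateagain} $\Leftrightarrow H$ positive definite $\Leftrightarrow \operatorname{Hess}_0 P$ positive definite $\Leftrightarrow$ \eqref{equ:stricttype4incoordinates}. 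I expect the main obstacle to be the clean identification $H=\operatorname{Hess}_0 P$: a priori the second tangential derivatives of $\lambda$ differ from the slice Hessian of $\Lambda$ by curvature terms measuring how $b\Omega$ bends off the slice $\{w=0\}$, and the crux is that these corrections $2\re(\Lambda_w(0)\,\ddot w_\bullet(0))$ vanish at $0$. This is exactly the place where the normalization $r_{zz}(0)=0$, reinforced by the derived identity $r_{z\bar z}(0)=0$, is indispensable—it is what kills all the mixed second accelerations $(XXw)(0),(XYw)(0),(YXw)(0),(YYw)(0)$.
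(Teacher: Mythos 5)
Your proof is correct and follows essentially the same route as the paper: both arguments reduce the two conditions to positive definiteness of the real Hessian at $0$ of the slice function $\Hc{r}{L}{L}(\cdot,0)$, kill the first-order correction terms using minimality of $\lambda$ at $0$ together with the hypothesis $r_{zz}(0)=0$ and the derived identity $r_{z\bar z}(0)=0$, and conclude via the trace--determinant criterion combined with the algebraic identities from the proof of \Cref{T:basictype4estimate}. The only difference is bookkeeping: where you establish the key cancellation by computing $(XXw)(0)=(XYw)(0)=(YYw)(0)=0$ along integral curves via \Cref{L:XXf}, the paper derives the same fact from the Chern-connection identities, showing that $(\nabla_L\bar{L})(0)$ and $(\nabla_LL)(0)$ are tangential (because $\Hc{r}{L}{L}(0)=\Qc{r}{L}{L}(0)=0$) and concluding the clean identities $(L\bar{L}\lambda)(0)=\Lambda_{z\bar z}(0)$ and $(LL\lambda)(0)=\Lambda_{zz}(0)$.
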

\begin{proof}
Let $U \subset \C^2$ denote the domain of definition of $r$, and define $\Lambda \colon U \to \R$ by $\Lambda \coloneqq \Hc{r}{L}{L}$. We will show that both (\ref{E:strictbasictype4estimateagain}) and (\ref{equ:stricttype4incoordinates}) are equivalent to the condition that the matrix
$$
A \coloneqq \begin{pmatrix} \Lambda_{xx} & \Lambda_{xy} \\ \Lambda_{yx} & \Lambda_{yy} \end{pmatrix}(0)
$$
is positive definite, where $z = x+iy$ with $x,y \in \R$. This proves the claim.

Observe first that $\Lambda_{|_{b\Omega \cap U}} = \lambda$. In particular, $\Lambda(0) = 0$. Moreover, since $\lambda$ attains a local minimum at $0$, it follows that $d\Lambda$ vanishes on $\Tr{0}{b\Omega}$, and thus $\Lambda_x(0) = \Lambda_y(0) = 0$.
Hence
$$
 \Lambda(z,0) = \textstyle\frac{1}{2}\displaystyle 
 \begin{pmatrix} x & y \end{pmatrix} A \begin{pmatrix} x \\ y \end{pmatrix} + o(\abs{z}^2).
$$
This shows that (\ref{equ:stricttype4incoordinates}) holds true if and only if $A>0$. 

On the other hand, we claim that $r_z(0) = r_{zz}(0) = 0$ implies that 
\begin{equation}\begin{split} \label{equ:LLLambda}
(L\bar{L}\lambda)(0) = \Lambda_{z\bar{z}}(0), \quad
(LL\lambda)(0) = \Lambda_{zz}(0).
\end{split}\end{equation}
From this, we immediately obtain that
\begin{align*}
(L\bar{L}\lambda)(0) &= \textstyle\frac{1}{4}(\Lambda_{xx} + \Lambda_{yy})(0),\\
\left((L\bar{L}\Lambda)^2 - \abs{LL\Lambda}^2\right)(0) &= \textstyle\frac{1}{4}\left(\Lambda_{xx}\Lambda_{yy}-\Lambda_{xy}^2\right)(0).
\end{align*}
Since a real $2 \times 2$ matrix is positive definite if and only if both its trace and determinant are positive, it follows that $(\ref{E:strictbasictype4estimateagain})$ holds true if and only if $A > 0$.

In order to see (\ref{equ:LLLambda}) we first note that, in view of \Cref{R:IndependenceLmabda}, and after possibly shrinking $U$, we can assume without loss of generality that $L=L_r$. Recall that
\begin{align*}
L\bar{L}\lambda &= \Hc{\Lambda}{L}{L} + (\nabla_L\bar{L})\Lambda,\\
LL\lambda &= \Qc{\Lambda}{L}{L} + (\nabla_LL)\Lambda.
\end{align*}
Note that the vectors $(\nabla_L\bar{L})(0)$ and $(\nabla_LL)(0)$ are tangential to $b\Omega$. Indeed, straightforward computations show that, for $L=L_r$ and $N=N_r$,
\begin{align*}
\nabla_L\bar{L}  &= \frac{1}{\abs{r_z}^2+\abs{r_w}^2}\Big(\Hc{r}{L}{N} \bar{L} - \Hc{r}{L}{L} \bar{N}\Big),\\
\nabla_LL  &= \frac{1}{\abs{r_z}^2+\abs{r_w}^2}\Big(\Qc{r}{L}{N} L - \Qc{r}{L}{L} N\Big).
\end{align*}
Moreover, $\Hc{r}{L}{L}(0) = 0$, since $\Omega$ is weakly pseudoconvex at $0$, and $\Qc{r}{L}{L}(0) = 0$, since $r_z(0) = r_{zz}(0) = 0$, which proves the claim. It follows that both derivatives $(\nabla_{\bar{L}}L)\Lambda$ and $(\nabla_LL)\Lambda$ vanish at $0$, since $\lambda$ attains a local minimum there. Since the condition $r_z(0) = 0$ implies that $L_0 = (\partial_z)_0$, the claim follows. (For an alternative proof of (\ref{equ:LLLambda}) under slightly stronger conditions, see \cite[Lemma 3.23]{Kohn72}.)
\end{proof}


\begin{remark}
Let $\Omega \subset \C^2$ be a smoothly bounded, pseudoconvex domain. Then $\Omega$ is of strict type $4$ at a weakly pseudoconvex point $p_0 \in b\Omega$ in the sense of Kohn, if 
\begin{align} \label{equ:KohnType4}
 \textstyle \frac{1}{3}\RE[(LL\lambda)(p_0)] + \frac{1}{4} (L\bar{L}\lambda)(p_0) > 0 
\end{align}
holds for all nonvanishing tangential holomorphic vector fields $L$ near $p_0$, see \cite[Definition 2.16]{Kohn72} in the case $m=3$.

If $L' \coloneqq hL$ for some smooth complex-valued function $h$ near $p_0$, then one has $\frac{1}{3}\RE[(L'L'\lambda)(p_0)] + \frac{1}{4} (L'\bar{L}'\lambda)(p_0) = \frac{1}{3}\RE[h(p_0)^2(LL\lambda)(p_0)] + \frac{1}{4} \abs{h(p_0)}^2(L\bar{L}\lambda)(p_0)$. From this, one easily sees that $\Omega$ is of strict type 4 at $p_0$ in the sense of Kohn if and only if 
\begin{align} \label{equ:KohnType4b}
\left(L\bar{L}\lambda\right)(p_0) > \textstyle\frac{4}{3} \abs{\left(LL\lambda\right)(p_0)}
\end{align}
for some, and then every, nonvanishing tangential holomorphic vector field $L$ near $p_0$. The condition (\ref{equ:KohnType4b}) is invariant under biholomorphic transformations by (\ref{equ:lambdatransform}).

Assume that coordinates are chosen in such a way that $p_0 = 0$ and $(\partial_z^jr)(0) = 0$ for $j \in\{1, \ldots, 4\}$. Then (\ref{equ:KohnType4b}) holds true if and only if there exists a constant $\varepsilon>0$ such that
\begin{align} \label{equ:strictKohntype4incoordinates}
 r(z,0) \ge \varepsilon|z|^4 + o(\abs{z}^4).
\end{align}
Indeed, since $(\partial_z^j\partial_{\bar{z}}^kr)(0) = 0$ for $j+k \le 2$, it follows that $0 = (L\lambda)(0) = \lambda_z(0) = \Lambda_z(0) = (\partial_z^2\partial_{\bar{z}}r)(0)$, where $\Lambda \coloneqq \Hc{r}{L}{L}$ and without loss of generality $L = L_r$. Thus
$
r(z,0) = \textstyle\frac{1}{3}\RE[(\partial_z^3\partial_{\bar{z}}r)(0)z^3\bar{z}] + \frac{1}{4}(\partial_z^2\partial_{\bar{z}}^2r)(0)\abs{z}^4 + o(\abs{z})^4.
$
Since $(\partial_z^j\partial_{\bar{z}}^kr)(0) = 0$ for $j+k \le 3$, it follows with (\ref{equ:LLLambda}) that $(LL\lambda)(0) = (\partial_z^3\partial_{\bar{z}}r)(0)$ and $(\bar{L}L\lambda)(0) = (\partial_z^2\partial_{\bar{z}}^2r)(0)$. (The characterization of points of strict type 4 in the sense of Kohn by means of (\ref{equ:strictKohntype4incoordinates}) is already implicitly contained in Kohn's original paper, see formulas (3.8) and (3.12) in \cite{Kohn72}. See also \cite[Theorem 3.3]{Bloom78}.)

In view of (\ref{equ:KohnType4b}), the definition of strict type 4 given in \Cref{def:stricttype4} is more general than the notion of strict type 4 in the sense of Kohn. In particular, if $\Omega$ is of strict type 4 at $p_0$ in the sense of Kohn, then it is of strict type 4 at $p_0$ in the sense of \Cref{def:stricttype4}.
 
Lastly, consider the following example. Let $r \colon \C^2 \to \R$ be given by 
$$
r(z,w) \coloneqq \RE w + \RE[az^3\bar{z}] + \abs{z}^4.
$$
Then $\Omega \coloneqq \{r<0\}$ is pseudoconvex at $0 \in b\Omega$ iff $\abs{a} \le \frac{4}{3}$. Moreover, by checking the conditions (\ref{equ:stricttype4incoordinates}) and (\ref{equ:strictKohntype4incoordinates}), one easily sees that $0$ is of strict type 4 in the sense of \Cref{def:stricttype4} iff $\abs{a} < \frac{4}{3}$, and $0$ is of strict type 4 in the sense of Kohn iff $\abs{a} < 1$.

\end{remark}

\section{Plurisubharmonicity on the boundary} \label{sec:pshboudary}

The main goal of this section is to prove the following theorem.

\begin{theorem}\label{T:type4pshonboundary}
  Let $\Omega\subset\mathbb{C}^2$ be a smoothly bounded, pseudoconvex domain, and let $p_0\in b\Omega$ 
  be a point of weak pseudoconvexity for $\Omega$. If $p_0$ is of strict type $4$,
  then $\Omega$ admits a smooth local defining function which is plurisubharmonic on $b\Omega$ near $p_0$. 
\end{theorem}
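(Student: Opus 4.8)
The plan is to seek the defining function in the form $\rho = re^h$, where $r$ is any fixed smooth local defining function for $\Omega$ near $p_0$ and $h$ is a smooth real-valued function to be determined; since $e^h > 0$, such a $\rho$ is automatically again a smooth local defining function for $\Omega$. Writing $L = L_r$ and $N = N_r$ as in \Cref{sec:CanonicalVectorFields}, and using $Lr \equiv 0$ and $Nr = \abs{\partial r}/\sqrt{2} > 0$ on $b\Omega$, a direct computation gives, on $b\Omega$ near $p_0$,
\begin{align*}
 \Hc{\rho}{L}{L} &= e^h\lambda, \\
 \Hc{\rho}{L}{N} &= e^h\big(\Hc{r}{L}{N} + (Nr)\,Lh\big), \\
 \Hc{\rho}{N}{N} &= e^h\big(\Hc{r}{N}{N} + (Nr)\,\nu h\big),
\end{align*}
where $\nu h = 2\RE(Nh)$ is the normal derivative of $h$. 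As $\Hcsymbolcal{\rho}$ is a $2 \times 2$ Hermitian matrix, $\rho$ is plurisubharmonic on $b\Omega$ near $p_0$ if and only if its two diagonal entries and its determinant are nonnegative there. The $(L,L)$-entry equals $e^h\lambda \ge 0$ by pseudoconvexity, so everything reduces to arranging the determinant condition $\lambda A \ge \abs{B}^2$, where $A \coloneqq \Hc{r}{N}{N} + (Nr)\,\nu h$ and $B \coloneqq \Hc{r}{L}{N} + (Nr)\,Lh$.

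The two crucial observations are that $Lh$ on $b\Omega$ depends only on $h_{|b\Omega}$, while $\nu h$ on $b\Omega$ may be prescribed independently of $h_{|b\Omega}$, and that both data are then realized by a single smooth $h$ on a full neighborhood of $p_0$. First I would use the tangential data: solving the first order equation
$$
 Lh = -\frac{\Hc{r}{L}{N}}{Nr} + \mathcal{O}(\sqrt{\lambda}) \quad\text{on } b\Omega \text{ near } p_0
$$
for $h_{|b\Omega}$ forces $B = \mathcal{O}(\sqrt{\lambda})$, hence $\abs{B}^2 \le C_1\lambda$ on a fixed compact neighborhood of $p_0$ for some constant $C_1 > 0$ (by \Cref{R:Landau} the symbol $\mathcal{O}(\sqrt{\lambda})$ is meaningful). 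Next I would use the normal data: prescribing $\nu h = M/(Nr)$ along $b\Omega$ with $M$ a sufficiently large constant makes $A = \Hc{r}{N}{N} + M \ge C_1$ near $p_0$, since $\Hc{r}{N}{N}$ is bounded. With both choices in force, $\lambda A - \abs{B}^2 \ge C_1\lambda - C_1\lambda = 0$ and $A > 0$ on $b\Omega$ near $p_0$, so $\Hcsymbolcal{\rho}$ is positive semi-definite there and $\rho = re^h$ is the desired defining function.

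The heart of the matter, and the step I expect to be the main obstacle, is the solvability of the first order equation $Lh = F + \mathcal{O}(\sqrt{\lambda})$ by a real-valued $h$ for prescribed smooth complex-valued $F$; this is exactly where the strict type $4$ hypothesis is used. By \Cref{T:basictype4estimate} and \Cref{thm:stricttype4incoordinates}, the real Hessian of $\lambda$ in the $X,Y$-directions is positive definite at $p_0$, and hence, by continuity, at every weakly pseudoconvex point near $p_0$; strict type $4$ should thus force $\lambda$ to vanish to \emph{precisely} second order in the directions transverse to the weakly pseudoconvex locus $Z = \{\lambda = 0\}$, so that $Z$ is a smooth real curve through $p_0$ and $\lambda \ge \varepsilon\,\DIST(\cdot,Z)^2$ near $p_0$ for some $\varepsilon > 0$. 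Writing $L = \tfrac{1}{2}(X + iY)$, I would then construct $h_{|b\Omega}$ by prescribing its $1$-jet along $Z$: namely $h \equiv 0$ on $Z$ together with $Xh = 2\RE F$ and $Yh = 2\IM F$ along $Z$, which is possible because $X,Y,T$ are linearly independent and $X,Y$ are transverse to $Z$. Then $Lh - F$ vanishes on $Z$, whence $Lh - F = \mathcal{O}(\DIST(\cdot,Z)) = \mathcal{O}(\sqrt{\lambda})$ by the quadratic lower bound for $\lambda$. The essential role of \emph{strict} type $4$ lies precisely in this nondegenerate transverse vanishing of $\lambda$ (and in establishing it rigorously, including that $Z$ is genuinely a curve and that no higher-order degeneracy occurs); at weak type $4$ or higher type points the bound $\lambda \ge \varepsilon\,\DIST(\cdot,Z)^2$ degenerates, which is what makes this construction break down there.
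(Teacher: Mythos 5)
Your scaffolding is essentially the paper's: you take $\rho = re^h$, use $\Hc{\rho}{L}{N} = e^h\bigl(\Hc{r}{L}{N} + (Nr)\,Lh\bigr)$ on $b\Omega$, reduce everything to the condition $\Hc{\rho}{L}{N} = \mathcal{O}(\sqrt{\lambda})$, and then fix up the $(N,N)$-entry; the paper does that last step inside \Cref{L:pshonboundaryiff} by composing with $\hat\rho = \chi\circ\rho$, $\chi''(0)$ large, which is equivalent to your prescription of $\nu h$, and your determinant criterion for positive semi-definiteness of the $2\times 2$ Hermitian matrix is correct. The genuine gap is in the step you yourself flag as the heart: the solvability of $Lh = F + \mathcal{O}(\sqrt{\lambda})$, cf.\ \eqref{E:claimforhgeneral}. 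You claim that strict type $4$ forces $Z \coloneqq \{\lambda = 0\}$ to be a smooth real curve through $p_0$ with $\lambda \ge \varepsilon\,\DIST(\cdot,Z)^2$. This is false. Strict type $4$ gives positive definiteness of the Hessian of $\lambda$ at $p_0$ only in the two directions $X,Y$; in the remaining tangential direction $T$ nothing prevents degenerate higher-order vanishing. Concretely, take $r(z,w) = \RE w + \abs{z}^4 + (\IM w)^4\abs{z}^2$: near $0$ this is pseudoconvex, the point $0$ is of strict type $4$ by \eqref{equ:stricttype4incoordinates} since $\Hc{r}{L}{L}(z,0) = \abs{z}^2 + o(\abs{z}^2)$, and a direct computation gives $\lambda \asymp \abs{z}^2 + v^4$ on $b\Omega$ near $0$ (with $v = \IM w$). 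Hence $Z = \{0\}$ is a single point, not a curve, and along the boundary curve $\{z=0\}$ one has $\lambda \sim v^4$ while $\DIST(\cdot,Z)^2 \sim v^2$, so your lower bound fails. Prescribing the $1$-jet of $h$ on $Z$ then only yields $Lh - F = \mathcal{O}(\DIST(\cdot,p_0))$, which along $\{z=0\}$ is $\mathcal{O}(v)$ and not the required $\mathcal{O}(\sqrt{\lambda}) = \mathcal{O}(v^2)$.

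Your geometric idea can be repaired, but not with $Z$: apply the implicit function theorem to $(X\lambda, Y\lambda) = 0$ (using that the $X,Y$-Hessian of $\lambda$ stays positive definite at all points near $p_0$, not just at weakly pseudoconvex ones) to obtain a smooth curve $\gamma \ni p_0$ of slicewise minima, transverse to $\SPAN(X,Y)$. Along each slice, $\lambda(q) \ge \mu + \varepsilon\abs{q-\gamma}^2$ with slicewise minimum value $\mu \ge 0$, so $\lambda \ge \varepsilon\,\DIST(\cdot,\gamma)^2$ holds even when $\mu$ vanishes to high order or not at all; anchoring the jet prescription $h = 0$, $Xh = 2\RE F$, $Yh = 2\IM F$ on $\gamma$ (consistent, since $h_{|\gamma}=0$ constrains only the $\dot{\gamma}$-derivative) then gives $Lh - F = \mathcal{O}(\DIST(\cdot,\gamma)) = \mathcal{O}(\sqrt{\lambda})$. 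The paper bypasses this geometry entirely with an explicit multiplier: it sets $h = \RE(F)\frac{A_1}{B} + \IM(F)\frac{A_2}{B}$ with $A_1 = 2\re\bigl((\bar{L}L\lambda - LL\lambda)\bar{L}\lambda\bigr)$, $A_2 = 2\re\bigl(i(\bar{L}L\lambda + LL\lambda)\bar{L}\lambda\bigr)$, and $B = \abs{L\bar{L}\lambda}^2 - \abs{LL\lambda}^2$, where strict type $4$ enters exactly once — to guarantee $B > 0$ near $p_0$ — and \Cref{thm:lHospital} ($L\lambda, \bar{L}\lambda = \mathcal{O}(\sqrt{\lambda})$) yields $LA_1 = B + \mathcal{O}(\sqrt{\lambda})$, $LA_2 = iB + \mathcal{O}(\sqrt{\lambda})$, hence $Lh = F + \mathcal{O}(\sqrt{\lambda})$. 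As written, your proof does not close; either the $\gamma$-based repair (with the implicit function theorem and a Whitney-type jet extension carried out in detail) or the paper's explicit formula is needed.
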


We use the following basic lemma to show \Cref{T:type4pshonboundary}. 

\begin{proposition}\label{L:pshonboundaryiff}
 Let $\Omega\subset\mathbb{C}^2$ be a smoothly bounded, pseudoconvex domain, $p_0\in b\Omega$. 
 Then $\Omega$ admits a smooth local defining function which is plurisubharmonic on $b\Omega$ near $p_0$ 
 if and only if there exist an open neighborhood $U$ of $p_0$ and a smooth local defining function $\rho$ for $\Omega$ 
 on $U$ such that
 \begin{align}\label{E:pshonboundary}
  \left|\Hc{\rho}{L}{N}\right|^2=\mathcal{O}(\Hc{\rho}{L}{L})\text{ on } b\Omega\cap U.
 \end{align}
 In fact, \eqref{E:pshonboundary} holds true for every smooth local defining function $\rho \colon U \to \R$ 
 that is plurisubharmonic on $b\Omega \cap U$.
\end{proposition}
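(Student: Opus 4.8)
The plan is to prove the two implications separately, noting that the displayed \enquote{In fact} assertion immediately yields the forward direction of the equivalence. Throughout I would work with the canonical frame $(L,N)=(L_\rho,N_\rho)$ attached to whichever defining function $\rho$ is under consideration, so that $L\rho\equiv 0$ and $N\rho=\tfrac{\abs{\partial\rho}}{\sqrt2}$ on $b\Omega$, and I would use that plurisubharmonicity on $b\Omega$ is exactly positive semi-definiteness of the Hermitian matrix $\Hcsymbolcal{\rho}$ along $b\Omega\cap U$.

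For the easy (forward) direction, suppose $\rho$ is a smooth local defining function that is plurisubharmonic on $b\Omega\cap U$. Then $\Hcsymbolcal{\rho}$ is positive semi-definite at each point of $b\Omega\cap U$, so the nonnegativity of its $2\times 2$ minor gives
\[
\left|\Hc{\rho}{L}{N}\right|^2 \le \Hc{\rho}{L}{L}\cdot\Hc{\rho}{N}{N} \quad\text{on } b\Omega\cap U.
\]
Since $\Hc{\rho}{N}{N}$ is smooth, hence locally bounded, this is precisely \eqref{E:pshonboundary}; this proves the \enquote{In fact} claim, and taking $\rho$ itself as the defining function gives the forward implication.

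The substance lies in the converse. Given a defining function $\rho$ satisfying \eqref{E:pshonboundary}, I would produce the desired function by the standard exponential modification $\tilde\rho\coloneqq\rho\,e^{C\rho}$ for a constant $C>0$ to be fixed; since $d\tilde\rho=e^{C\rho}(1+C\rho)\,d\rho$, this is again a smooth local defining function, and on $b\Omega$ it shares the frame $(L,N)$ with $\rho$ by \eqref{equ:independenceonboundary}. The key computation is that, on $b\Omega$ (where $\rho=0$),
\[
\Hc{\tilde\rho}{V}{W} = \Hc{\rho}{V}{W} + 2C\,(V\rho)\,\overline{(W\rho)} \qquad\text{for all } V,W\in\Vhol{}{U},
\]
a rank-one perturbation which, evaluated in the frame $(L,N)$ via $L\rho=0$ and $N\rho=\tfrac{\abs{\partial\rho}}{\sqrt2}$, leaves $\Hc{\rho}{L}{L}$ and $\Hc{\rho}{L}{N}$ untouched and replaces only $\Hc{\rho}{N}{N}$ by $\Hc{\rho}{N}{N}+C\abs{\partial\rho}^2$. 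Writing $\lambda=\Hc{\rho}{L}{L}\ge 0$ by pseudoconvexity, the determinant of $\Hcsymbolcal{\tilde\rho}$ along $b\Omega$ is then $\lambda\bigl(\Hc{\rho}{N}{N}+C\abs{\partial\rho}^2\bigr)-\left|\Hc{\rho}{L}{N}\right|^2$.

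To finish, I would fix a compact neighborhood $K\Subset b\Omega\cap U$ of $p_0$, use \eqref{E:pshonboundary} to obtain a constant $C_1>0$ with $\left|\Hc{\rho}{L}{N}\right|^2\le C_1\lambda$ on $K$, and note that $\abs{\partial\rho}^2$ is bounded below by a positive constant on $K$ because $d\rho\neq 0$. The determinant is then at least $\lambda\bigl(\Hc{\rho}{N}{N}+C\abs{\partial\rho}^2-C_1\bigr)$, which is nonnegative for $C$ large, while simultaneously $\Hc{\tilde\rho}{N}{N}\ge 0$ there; hence $\Hcsymbolcal{\tilde\rho}$ is positive semi-definite along $b\Omega$ near $p_0$, as required. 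The main obstacle is to recognize that \eqref{E:pshonboundary} is the exact hypothesis allowing the gain $C\lambda\abs{\partial\rho}^2$ in the determinant to dominate the off-diagonal term $\left|\Hc{\rho}{L}{N}\right|^2$ uniformly on $K$; everything else is the bookkeeping behind the boundary Hessian identity and the observation that inflating the normal--normal entry cannot destroy the two nonnegative diagonal entries.
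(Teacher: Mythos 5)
Your proposal is correct and takes essentially the same route as the paper: the forward direction via nonnegativity of the $2\times2$ determinant of $\Hcsymbolcal{\rho}$ together with local boundedness of $\Hc{\rho}{N}{N}$, and the converse via a modification that, on $b\Omega$, perturbs only the $\Hc{\rho}{N}{N}$ entry by a large positive multiple of $\abs{N\rho}^2$ and then checks trace and determinant on a slightly shrunk neighborhood. Your choice $\tilde\rho=\rho e^{C\rho}$ is simply the special case $\chi(t)=te^{Ct}$ of the paper's composition $\hat\rho=\chi\circ\rho$ with $\chi(0)=0$, $\chi'(0)=1$, $\chi''(0)=C$ large, so the two arguments coincide up to an immaterial factor of $2$ in the constant.
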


\begin{proof}
 If $\rho \colon U \to \R$ is plurisubharmonic on $b\Omega \cap U$, then
 \begin{align*}
  \Hcsymbolcal{\rho} =
   \begin{pmatrix}
    \Hc{\rho}{L}{L} & \Hc{\rho}{L}{N} \\
    \Hc{\rho}{N}{L} & \Hc{\rho}{N}{N}
   \end{pmatrix}
 \end{align*}
 is positive semi-definite at every point $p \in b\Omega \cap U$. In particular, $\det \Hcsymbolcal{\rho} \ge 0$ 
 on $b\Omega \cap U$, which implies \eqref{E:pshonboundary}.
 
 On the other hand, suppose that \eqref{E:pshonboundary} holds for some smooth local defining function 
 $\rho \colon U \to \R$ of $\Omega$ near $p_0$. Let $\chi \colon \R \to \R$ be a smooth function such 
 that $\chi(0) = 0$, $\chi'(0) = 1$, and write $\chi''(0) \eqqcolon C$. If $\hat\rho \coloneqq \chi \circ \rho$, then we get
 \begin{align*}
  \Hcsymbolcal{\hat\rho} =
  \begin{pmatrix}
   \Hc{\rho}{L}{L} & \Hc{\rho}{L}{N}  \\
   \Hc{\rho}{N}{L} & \Hc{\rho}{N}{N} + C \abs{N\rho}^2
  \end{pmatrix} 
  \quad\text{on } b\Omega \cap U.  
 \end{align*}
Let $V \Subset U$ be another open neighborhood of $p_0$. Note that $\abs{N\rho} = \frac{1}{2}\abs{d\rho} > c > 0$ on $b\Omega \cap V$ for some $c \in \R$. Thus, if $C>0$ is sufficiently large, it follows that $\TRACE\Hcsymbolcal{\hat\rho} > 0$ and $\det \Hcsymbolcal{\hat\rho} \ge 0$ on $b\Omega \cap V$, where for the second inequality we use assumption \eqref{E:pshonboundary}. This means that $\Hcsymbolcal{\hat\rho} \ge 0$ on $b\Omega \cap V$, i.e., $\hat\rho$ is plurisubharmonic on $b\Omega \cap V$.
\end{proof}

\begin{remark} \label{R:pshonboundaryiff}
The proof of \Cref{L:pshonboundaryiff} shows, in particular, the following:
if \eqref{E:pshonboundary} holds true, then 
for every $V \Subset U$ there exists a smooth local defining function for $\Omega$ on $U$ that is plurisubharmonic on $b\Omega \cap V$. 
\end{remark}

Note that \eqref{E:pshonboundary} may be reformulated as
$$\left|\Hc{\rho}{L}{N}\right|=\mathcal{O}(\sqrt{\lambda})\text{ on } b\Omega\cap U,$$
where $\lambda = \Hc{r}{L}{L}_{|_{b\Omega\cap U}}$ for any smooth local defining function $r \colon U \to \R$ for $\Omega$ and any nonvanishing holomorphic tangential vector field $L$ on $U$, see Remark \ref{R:Landau}.

\begin{proof}[Proof of Theorem \ref{T:type4pshonboundary}]
  Let $r \colon U \to \R$ be a smooth local defining function for $\Omega$ near $p_0$, and set 
  $\lambda \coloneqq \Hc{r}{L}{L}_{|_{b\Omega\cap U}}$.
  Since $\Omega$ is of strict type $4$ at $p_0$, after possibly shrinking $U$, we may assume that 
  $B \coloneqq |L\bar{L}\lambda|^2-|LL\lambda|^2 > 0$ on $U$. We claim that, for every smooth function 
  $F \colon U \to \C$, there exists a smooth function $h \colon U \to \R$ such that
  \begin{align}\label{E:claimforhgeneral}
    Lh = F + \mathcal{O}(\sqrt{\lambda}) \quad\text{on } b\Omega \cap U.
  \end{align}
  Indeed, define smooth functions $A_1, A_2 \colon U \to \R$ by 
  \begin{align*}
    A_1 \coloneqq 2\re\left((\bar{L}L\lambda-LL\lambda)\bar{L}\lambda \right) \quad\text{and}\quad
    A_2 \coloneqq 2\re\left(i(\bar{L}L\lambda+LL\lambda)\bar{L}\lambda \right),
  \end{align*}
  and set
  \begin{align}\label{E:definitionh}
    h \coloneqq \RE (F) \frac{A_1}{B} + \IM (F) \frac{A_2}{B}.
  \end{align}
  By \Cref{thm:lHospital}, both $L\lambda$ and $\bar{L}\lambda$ are of class 
  $\mathcal{O}(\sqrt{\lambda})$. Thus
  \begin{align*}
    Lh = \RE (F) \frac{LA_1}{B} + \IM (F) \frac{LA_2}{B} + \mathcal{O}(\sqrt{\lambda}) 
    \quad\text{on } b\Omega \cap U,
  \end{align*}
  and, again on $b\Omega \cap U$,
  \begin{align*}
  LA_1 &= (\bar{L}L\lambda-LL\lambda)L\bar{L}\lambda+(L\bar{L}\lambda-\bar{L}\bar{L}\lambda)LL\lambda
          +\mathcal{O}(\sqrt{\lambda}) = B +\mathcal{O}(\sqrt{\lambda}), \\
  LA_2 &= i(\bar{L}L\lambda+LL\lambda)L\bar{L}\lambda-i(L\bar{L}\lambda+\bar{L}\bar{L}\lambda)LL\lambda
          +\mathcal{O}(\sqrt{\lambda}) = iB +\mathcal{O}(\sqrt{\lambda}).
  \end{align*}
  Hence $h$ is a solution to \eqref{E:claimforhgeneral}.
 
  Now let $h \colon U \to \R$ be an arbitrary smooth function, and set $\rho \coloneqq re^h$. Then
  \begin{align*}
    \Hc{\rho}{L}{N} = e^h\left(\Hc{r}{L}{N}+ Lh\cdot \bar{N}r \right) \quad\text{on } b\Omega \cap U.
  \end{align*}
  Since $\bar{N}r = \frac{1}{2}\abs{dr} \neq 0$ on $b\Omega \cap U$, it follows that \eqref{E:pshonboundary} 
  is satisfied if $h$ is the solution for \eqref{E:claimforhgeneral} with 
  $F \coloneqq -\frac{2}{\abs{dr}}\Hc{r}{L}{N}$.
  Thus, the claim follows from \Cref{L:pshonboundaryiff}.
\end{proof}

\begin{remark} The functions $A_1$, $A_2$, and $B$ in the proof of \Cref{T:type4pshonboundary} depend on the given smooth local defining function $r \colon U \to \R$. However, the function $h$ defined in \eqref{E:definitionh} solves \eqref{E:claimforhgeneral} for any choice of $r$. 
\end{remark}

 A global version of \Cref{T:type4pshonboundary} easily follows.

\begin{corollary}
  Let $\Omega\Subset\mathbb{C}^2$ be a smoothly bounded, pseudoconvex domain. Assume that all weakly pseudoconvex
  boundary points of $b\Omega$ are of strict type $4$. Then $\Omega$ admits a smooth defining function which is
  plurisubharmonic on $b\Omega$.
\end{corollary}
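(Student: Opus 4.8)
The plan is to reduce the assertion to producing a single globally defined defining function that satisfies \eqref{E:pshonboundary} along all of $b\Omega$, after which the construction in the proof of \Cref{L:pshonboundaryiff} finishes the argument. Since $\Omega\Subset\C^2$, its signed distance function furnishes a global smooth defining function $r$ with $dr\neq0$ near $b\Omega$; I would fix it together with the globally defined, nonvanishing tangential field $L\coloneqq L_r$ from \eqref{E:definitionLN} and put $\lambda\coloneqq\Hc{r}{L}{L}_{|_{b\Omega}}$. The weakly pseudoconvex locus $W\coloneqq\{\lambda=0\}\subset b\Omega$ is then compact, and by hypothesis each of its points is of strict type $4$; recalling that $L\bar{L}\lambda$ is real, this means $B\coloneqq(L\bar{L}\lambda)^2-\abs{LL\lambda}^2>0$ on $W$, hence on an open neighborhood $U_W$ of $W$ in $b\Omega$.

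Next I would build one global $h$. The decisive observation is that the solution \eqref{E:definitionh} constructed in the proof of \Cref{T:type4pshonboundary}, taken with $F\coloneqq-\frac{2}{\abs{dr}}\Hc{r}{L}{N}$, is given by the single explicit formula $h_W=\RE(F)\frac{A_1}{B}+\IM(F)\frac{A_2}{B}$ depending only on $r$ and $L$; it is therefore well defined on all of $U_W$, so no patching between distinct weakly pseudoconvex points is required. Exactly as in that proof --- using $L\lambda,\bar{L}\lambda=\mathcal{O}(\sqrt{\lambda})$ from \Cref{thm:lHospital} together with $LA_1=B+\mathcal{O}(\sqrt{\lambda})$ and $LA_2=iB+\mathcal{O}(\sqrt{\lambda})$, and using that $1/B$ is bounded on compact subsets of $U_W$ --- one obtains $Lh_W=F+\mathcal{O}(\sqrt{\lambda})$ on $b\Omega\cap U_W$. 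I would then choose a cutoff $\chi$ on $b\Omega$ with $\chi\equiv1$ on a relatively compact neighborhood $N_0\Subset U_W$ of $W$ and $\supp\chi\subset U_W$, set $h\coloneqq\chi h_W$, and extend $h$ smoothly and arbitrarily to a neighborhood of $b\Omega$; only $h_{|_{b\Omega}}$ matters, since for tangential $L$ one has $\Hc{re^h}{L}{N}=e^h\bigl(\Hc{r}{L}{N}+(\bar{N}r)\,Lh\bigr)$ on $b\Omega$.

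To verify that $\rho\coloneqq re^h$ satisfies \eqref{E:pshonboundary} on $b\Omega$ I would split $b\Omega$ into two parts. On $N_0$, where $\chi\equiv1$ and hence $Lh=Lh_W=F+\mathcal{O}(\sqrt{\lambda})$, the displayed identity and the choice of $F$ give $\Hc{\rho}{L}{N}=\mathcal{O}(\sqrt{\lambda})$, exactly as at the end of the proof of \Cref{T:type4pshonboundary}. On the complement $b\Omega\setminus N_0$, which is a compact subset of the strictly pseudoconvex locus $b\Omega\setminus W$, continuity and compactness give $\lambda\geq\delta>0$; there every bounded smooth function is $\mathcal{O}(\sqrt{\lambda})$, and since $\Hc{\rho}{L}{N}$, $h$ and $Lh$ are all bounded, again $\Hc{\rho}{L}{N}=\mathcal{O}(\sqrt{\lambda})$. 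Thus $\abs{\Hc{\rho}{L}{N}}=\mathcal{O}(\sqrt{\lambda})$, i.e.\ \eqref{E:pshonboundary}, holds on all of $b\Omega$, with a uniform constant because $b\Omega$ is compact. Finally, replacing $\rho$ by $\chi_0\circ\rho$ for a convex $\chi_0$ with $\chi_0(0)=0$, $\chi_0'(0)=1$ and $\chi_0''(0)$ large enough --- admissible since $\abs{N\rho}=\frac12\abs{d\rho}$ is bounded below on the compact set $b\Omega$ --- makes $\Hcsymbolcal{\chi_0\circ\rho}\geq0$ everywhere on $b\Omega$, as in \Cref{L:pshonboundaryiff}, yielding the desired global defining function that is plurisubharmonic on $b\Omega$.

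The step I expect to be the crux is precisely this globalization. A naive partition-of-unity patching of local solutions $h_j$ of $Lh=F+\mathcal{O}(\sqrt{\lambda})$ fails, because two such solutions agree only up to $\mathcal{O}(\sqrt{\lambda})$ in their $L$-derivatives, not in their values, so the error $\sum_j(L\phi_j)h_j$ need not be $\mathcal{O}(\sqrt{\lambda})$. The remedy used above is that the formula \eqref{E:definitionh} is uniform over the entire strict type $4$ set $W$, so a single cutoff suffices, and its transition region can be placed entirely inside the strictly pseudoconvex part of $b\Omega$, where $\sqrt{\lambda}$ is bounded below and absorbs any bounded error.
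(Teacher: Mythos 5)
Your proposal is correct and follows essentially the same route as the paper's proof: a single explicit solution $h$ of $Lh=F+\mathcal{O}(\sqrt{\lambda})$ built from the formula in the proof of \Cref{T:type4pshonboundary} on a neighborhood of the (compact) weakly pseudoconvex set where $B>0$, a cutoff whose transition region lies in the strictly pseudoconvex part of $b\Omega$ where $\lambda$ is bounded below and \eqref{E:pshonboundary} holds for any defining function, and finally \Cref{L:pshonboundaryiff}. Your closing remark about why naive partition-of-unity patching fails, and why the uniformity of \eqref{E:definitionh} makes a single cutoff suffice, is exactly the point the paper's proof relies on implicitly.
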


\begin{proof}
  Let $r \colon V \to \R$ be a smooth defining function for $\Omega$.
  Let $\mathcal{W} \subset b\Omega$ denote the set of points at which $\Omega$ is weakly pseudoconvex. 
  Then $\mathcal{W}$ is closed in $b\Omega$. Further, it follows from the hypothesis that 
  $B = |L\bar{L}\lambda|^2-|LL\lambda|^2$ 
  is strictly positive on some open neighborhood $U \Subset V$ of $\mathcal{W}$. 
  As in the proof of Theorem \ref{T:type4pshonboundary}, we find a smooth function 
  $h \colon U \to \R$ such that $\rho \coloneqq re^h$ satisfies \eqref{E:pshonboundary} on $b\Omega\cap U$. 
  
  Let $U'\Subset U$ be another open neighborhood of $\mathcal{W}$, and let $\chi$ be a real-valued, smooth function 
  which is compactly supported in $U$ and identically $1$ on $U'$. Then $\tilde\rho:=r e^{\chi\cdot h}$ is a smooth
  defining function for $\Omega$ such that $\tilde\rho = \rho$ on $U'$, hence it satisfies \eqref{E:pshonboundary}
  on $b\Omega\cap U'$. Now note that $b\Omega\setminus U'$ is a compact set at whose points $\Omega$ is strictly
  pseudoconvex. Hence, \eqref{E:pshonboundary} is satisfied on $b\Omega\setminus U'$ by any defining 
  function for $\Omega$, in particular by $\tilde\rho$. Thus, $\tilde\rho$ satisfies \eqref{E:pshonboundary}
  on all of $b\Omega$. It now follows from \Cref{L:pshonboundaryiff} that $\Omega$ admits a defining function which is plurisubharmonic on $b\Omega$.
\end{proof}

In the following, we give two examples of smoothly bounded, pseudoconvex domains which admit a plurisubharmonic defining function  on the boundary, although they have boundary points of weak type $4$.

\begin{example}
  For $(z,w)\in\mathbb{C}^2$, write $z=x+iy$ and $w=u+iv$. 
  Then define $\Omega=\{(z,w)\in\mathbb{C}^2: r(z,w)<0\}$
  with $r(z,w):=u+f(z)$ for some smooth, subharmonic function $f$.
   It follows that for all $\in\Vhol{}{\mathbb{C}^2}$, $V = V^1\frac{\partial}{\partial z} + V^2 \frac{\partial}{\partial w}$,
  $$\Hc{r}{V}{V}=f_{z\bar{z}}|V^1|^2. $$
  Hence, $r$ is a plurisubharmonic defining function for $\Omega$, independent of the type of $b\Omega$ at any of its boundary points.
  In particular, $b\Omega$ may be of weak type $4$ at some boundary point $p_0$, e.g., if $f(z)=x^4$ and $p_0=0$.
\end{example}

\begin{example}\label{E:example}
  As in the previous example, write $z=x+iy$ and $w=u+iv$. Set $$U=\{(z,w)\in\mathbb{C}^2:|x|<\pi/2\},$$
  and define $\Omega=\{(z,w)\in U:r(z,w)<0\}$ for
  $$r(z,w)=u-\textstyle\frac{1}{2}\left(x-v \right)^2-\ln(\cos(x)).$$
  We compute that, for every $(z,w)\in U$,
  \begin{align*}
    r_z(z,w)&=\textstyle \frac{1}{2}\left(\tan(x)-\left(x-v\right)\right),\\
    r_w(z,w)&=\textstyle \frac{1}{2}(1-i(x-v))\\
    r_{z\bar{z}}(z,w)&=\textstyle\frac{1}{4}\tan^2(x),\;r_{z\bar{w}}(z,w)=\textstyle\frac{i}{4},\;r_{w\bar{w}}(z,w)=-\textstyle\frac{1}{4},
   \end{align*}
   so that
   \begin{align*}
     (r_{z\bar{z}}|r_w|^2)(z,w)&=\textstyle{\frac{1}{16}}\tan^2(x)(1+(x-v)^2),\\
     (r_{w\bar{w}}|r_z|^2)(z,w)&=-\textstyle{\frac{1}{16}}\left(\tan(x)-(x-v)\right)^2,\\
     -2\re[r_{z\bar{w}}r_wr_{\bar{z}}](z,w)&=-\textstyle{\frac{1}{8}}(x-v)\tan(x)
     +\textstyle{\frac{1}{8}}\left(x-v\right)^2.
   \end{align*}
   Hence, for $L=r_w\frac{\partial}{\partial z}-r_z\frac{\partial}{\partial w}$, we obtain
   \begin{align*}
     \Hc{r}{L}{L}(z,w)=\textstyle{\frac{1}{16}}(x-v)^2\sec^2(x)\geq 0,
   \end{align*}
   that is, $\Omega$ is pseudoconvex. In fact, $\Omega$ is strictly pseudoconvex except at boundary points 
   satisfying $x=v$. 
   Moreover, since $-\ln(\cos(x)) = \frac{1}{2}x^2 + \frac{1}{12}x^4 + o(x^4)$, it follows from
   \eqref{E:FiniteTypeCoordinates} and \Cref{thm:stricttype4incoordinates} that $\Omega$ is 
   of weak type $4$ at the origin.
   In particular, the function $h$ constructed in the proof of 
   Theorem~\ref{T:type4pshonboundary}, see \eqref{E:definitionh}, is not defined at the origin.
   However, a straightforward computation, see \Cref{E:examplecont} in Section \ref{S:sesquiconvex}, shows that
   $\rho(z,w):=r(z,w)e^{y}\cos x$ satisfies \eqref{E:pshonboundary}, i.e., for every $V \Subset U$ there exists 
   a function $\hat{\rho} \colon U \to \R$ such that $\hat{\rho}$ is plurisubharmonic on $b\Omega \cap V$, 
   see \Cref{R:pshonboundaryiff}.
   
   We show in the following that any smooth function $h$ such that $re^h$ is plurisubharmonic on   
   $b\Omega$ near the origin must have nonvanishing derivative with respect to $y$ at the origin, although $r$ is independent of $y$. This is  
   noteworthy 
   because it shows that, in the case of the domain being of weak type $4$ at some boundary point, 
   the multiplier function $h$, if it exists, can in general not be given as a combination of derivatives of 
   $r$ as in the strict type $4$ 
   case.
   
   Now, suppose $h$ is a positive, smooth function near the origin such that $h_y(0)=0$. A straightforward computation, with $V=-\frac{\partial}{\partial z}+is\frac{\partial}{\partial w}$ for $s>0$, yields
   \begin{align*}
     \Hc{r}{V}{V}(0)&=
     -\textstyle\frac{1}{2}s+\mathcal{O}(s^2).
   \end{align*}
   Moreover,
   \begin{align*}
     (Vr)(0)&=
     \textstyle\frac{i}{2}s, \text{ and}\\
     (Vh)(0)&=
     -\textstyle\frac{1}{2}h_x(0)+ish_w(0).
   \end{align*}
  Therefore,
   \begin{align*}  
     2\re\left(Vh\cdot\bar{V}r \right)(0)
     =\mathcal{O}(s^2).
   \end{align*}
   It then follows that
   \begin{align*}
     \Hc{e^h r}{V}{V}(0)=e^{h(0)}\Bigl( \Hc{r}{V}{V}
     +2\re\left(Vh\cdot\bar{V}r \right) \Bigr)(0)
     =-\textstyle{\frac{1}{2}}s+\mathcal{O}(s^2)<0
   \end{align*} 
   for all $s>0$ sufficiently close to $0$.
\end{example}

\section{Plurisubharmonicity near the boundary}\label{sec:pshnearboundary}

In this section, we first consider smoothly bounded, pseudoconvex domains in $\mathbb{C}^2$ such that all weakly pseudoconvex boundary points are of type $4$. 
In the case of bounded domains, we show that there exists a smooth, plurisubharmonic defining function for the domain whenever there is a smooth defining function which is plurisubharmonic on the boundary of the domain.

In the latter part of this section, we consider smoothly bounded, pseudoconvex domains in $\mathbb{C}^2$ that are at least of type 6 at their weakly pseudoconvex boundary points. In the case that such a domain admits a smooth defining function which is plurisubharmonic on the boundary, we give a simplified proof that both the Diederich--Forn{\ae}ss index and the Steinness index are $1$.

A lack of understanding of the notion of existence of a plurisubharmonic defining function is rooted in the lack of an equivalent condition which is checkable for \emph{any} defining function. The following lemma yields a condition which is checkable on a class of defining functions strictly larger than the class of plurisubharmonic defining functions.
A version of this lemma in the context of convex domains is given in \cite[Proposition 6.17]{HerMcN09}.

\begin{lemma}\label{L:basicestimate1}
  Let $\Omega\subset\mathbb{C}^n$ be a smoothly bounded, pseudoconvex domain, $p_0\in b\Omega$.
  Then $\Omega$ admits a smooth local defining function which is plurisubharmonic near $p_0$ 
  if and only if there exists a smooth local defining function $r$ for $\Omega$ near $p_0$ such that 
  \begin{align}\label{E:basicestimate1}
    \Hc{r}{\xi}{\xi}\geq -C\left( r^2|\xi|^2+\left|\langle\partial r,\xi\rangle\right|^2\right)\;\;\sjump\forall\;\xi\in\mathbb{C}^n
  \end{align}
  for some constant $C>0$.\footnote{Here, and occasionally later on, we consider the complex Hessian form $\Hcsymbol{r}$ at a point as a sesquilinear form on $\mathbb{C}^n$.}
\end{lemma}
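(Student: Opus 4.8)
The plan is to establish the two implications separately; the forward one is immediate and the reverse one requires an explicit construction.

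For the direction ``$\Rightarrow$'' I would simply observe that if $r$ is a smooth local defining function that is plurisubharmonic near $p_0$, then $\Hc{r}{\xi}{\xi}\ge 0$ for all $\xi$ on a neighborhood of $p_0$, whereas the right-hand side of \eqref{E:basicestimate1} is nonpositive for every $C>0$. Hence the very same $r$ satisfies \eqref{E:basicestimate1} with, say, $C=1$, and there is nothing further to check.

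The substance is ``$\Leftarrow$''. Given $r$ satisfying \eqref{E:basicestimate1}, I would first record why the naive attempt is insufficient. Composing with a rapidly increasing convex $\chi$ yields
\[
\Hc{\chi(r)}{\xi}{\xi} = \chi'(r)\,\Hc{r}{\xi}{\xi} + \chi''(r)\,\abs{\langle\partial r,\xi\rangle}^2 ,
\]
and together with \eqref{E:basicestimate1} this controls every direction except the purely tangential one ($\langle\partial r,\xi\rangle = 0$), where one is left with $\chi'(r)\Hc{r}{\xi}{\xi}\ge -C\chi'(r)r^2\abs{\xi}^2$, strictly negative off the boundary. Thus $\chi(r)$ is plurisubharmonic only on $b\Omega$, and the term $-Cr^2\abs{\xi}^2$ in tangential directions is precisely the obstruction to be eliminated.

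The key step is to absorb this term by a preliminary modification $\tilde r \coloneqq r + Br^2\phi$, where $\phi\coloneqq\abs{z-p_0}^2$ is strictly plurisubharmonic with $\Hc{\phi}{\xi}{\xi}=\abs{\xi}^2$ and $B>C$ is a large constant; on a small neighborhood $\tilde r = r(1+Br\phi)$ is again a defining function for $\Omega$. A direct computation gives
\[
\Hc{\tilde r}{\xi}{\xi} = (1+2Br\phi)\,\Hc{r}{\xi}{\xi} + 2B\phi\,\abs{\langle\partial r,\xi\rangle}^2 + 4Br\,\RE\!\big(\langle\partial r,\xi\rangle\,\overline{\langle\partial\phi,\xi\rangle}\big) + Br^2\abs{\xi}^2 ,
\]
and the point is that the new full-rank term $Br^2\abs{\xi}^2$ defeats the bad $-Cr^2\abs{\xi}^2$ issued by \eqref{E:basicestimate1}: after bounding the cross term by an AM--GM inequality and shrinking the neighborhood so that $\phi$ and $r$ are small, I would obtain $\Hc{\tilde r}{\xi}{\xi}\ge c_1 r^2\abs{\xi}^2 - c_2\abs{\langle\partial\tilde r,\xi\rangle}^2$ with $c_1,c_2>0$, replacing $\langle\partial r,\xi\rangle$ by $\langle\partial\tilde r,\xi\rangle$ at the cost of terms absorbable into $c_1 r^2\abs{\xi}^2$. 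In particular $\tilde r$ satisfies the sharpened estimate $\Hc{\tilde r}{\xi}{\xi}\ge -c_2\abs{\langle\partial\tilde r,\xi\rangle}^2$, now free of any $r^2\abs{\xi}^2$ term, and composing $\rho\coloneqq\chi(\tilde r)$ with $\chi''\ge c_2\chi'$ (e.g.\ $\chi(t)=(e^{c_2 t}-1)/c_2$) gives $\Hc{\rho}{\xi}{\xi}\ge(\chi''-c_2\chi')\abs{\langle\partial\tilde r,\xi\rangle}^2\ge 0$, so $\rho$ is a smooth local defining function plurisubharmonic near $p_0$.

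I expect the main obstacle to be the tangential analysis in the reverse direction: recognizing that $-Cr^2\abs{\xi}^2$ cannot be removed by any single composition or positive multiplication, since for $\rho=rg$ the relevant tangential contribution $r\,\Hc{g}{\xi}{\xi}$ has a fixed sign off the boundary while $r^2$ does not, so it can dominate $r^2$ on only one side of $b\Omega$. The correct device is the quadratic correction $\tilde r = r(1+Br\phi)$, whose multiplier $1+Br\phi$ has a complex Hessian that vanishes on and changes sign across $b\Omega$ like $r$ itself, thereby producing a genuine positive $r^2$-term on both sides; verifying that the accompanying cross terms vanish in tangential directions or are absorbable, and that the quadratic correction and the subsequent convexification are compatible uniformly on one fixed neighborhood, is the technical heart of the argument.
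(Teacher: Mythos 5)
Your proof is correct and takes essentially the same approach as the paper's: there one sets $\rho \coloneqq r+r^2\psi$ with $\psi(z)=K_1+K_2\abs{z}^2$, absorbs the cross term by the same AM--GM inequality \eqref{E:sclc}, and chooses $K_1,K_2$ large to obtain the sharpened estimate \eqref{E:HessianPositivity} directly. The only cosmetic difference is that you drop the additive constant $K_1$ from the multiplier (using $\phi=\abs{z-p_0}^2$ alone) and instead remove the residual $-c_2\abs{\langle\partial\tilde{r},\xi\rangle}^2$ by the convex composition $\chi$ with $\chi''\geq c_2\chi'$ --- an interchangeable device, since the paper's term $K_1r^2$ plays exactly the role of the quadratic part of your $\chi$.
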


 To put \eqref{E:basicestimate1} in context, we recall that for any smoothly bounded, pseudoconvex domain $\Omega\Subset\mathbb{C}^n$ with smooth defining function $r$, 
there exist an open neighborhood $U$ of the boundary of $\Omega$ and a constant $C>0$ such  that
\begin{align}\label{E:RangeEst}
  \Hc{r}{\xi}{\xi}\geq -C\left( |r|\cdot|\xi|^2+\left|\langle\partial r,\xi\rangle\right|\cdot|\xi|\right)\;\;\sjump\forall\xi\in\mathbb{C}^n
\end{align}
on $U$.
That \eqref{E:RangeEst} holds true on $\Omega\cap U$ is derived by Range, see (5) in \cite{Range81}, to reprove the result of Diederich--Forn{\ae}ss \cite[Theorem 1]{DieFor77-2} on the existence of bounded, strictly 
plurisubharmonic exhaustion functions for smoothly bounded, pseudoconvex domains, see \cite[Theorem 2]{Range81}. Arguments similar to the ones in \cite{Range81} yield \eqref{E:RangeEst} on $U$. We note that, if for every $\varepsilon>0$, there exists a smooth defining function $r=r_\varepsilon$ such that \eqref{E:RangeEst} holds with $C=\varepsilon$, then both the Diederich--Forn{\ae}ss index and the Steinness index are $1$, see the proof of Corollary 1.6 in \cite{ForHer08}.

\begin{proof}[Proof of Lemma \ref{L:basicestimate1}]
  Note first that if $r$ is a smooth local defining function for $\Omega$ which is plurisubharmonic on an open neighborhood $U$ of $p_0$, then 
   \eqref{E:basicestimate1} holds trivially for $r$ on $U$. 
   
  Let $p_0\in b\Omega$ and let $U \Subset \C^n$ be an open neighborhood of $p_0$. 
  Now suppose that $r: U\longrightarrow \mathbb{R}$ is a smooth local defining function for $\Omega$ on $U$ such that  \eqref{E:basicestimate1} holds.
  Consider $\rho:=r+r^2\psi$ with $\psi(z):=K_1+K_2|z|^2$ for fixed, positive constants $K_1$ and $K_2$ to be determined later. It follows from a straightforward computation that
  \begin{align*}
    \Hc{\rho}{\xi}{\xi}=(1+2r\psi)\Hc{r}{\xi}{\xi}+2\psi|\langle\partial r,\xi\rangle|^2+4K_2r\re\left(\langle\partial r,\xi\rangle \langle z,\xi\rangle\right)+r^2 K_2|\xi|^2
  \end{align*}
  for $\xi\in\mathbb{C}^n$. Next, it follows from 
  \begin{align}\label{E:sclc}
     2ab\leq\varepsilon a^2+\frac{1}{\varepsilon}b^2 \;\;\text{ for }a,b\geq 0 \text{ and }\varepsilon>0,
  \end{align}
  with $a = \abs{r}\abs{\xi}$, $b = \abs{z}\abs{\langle\partial r,\xi\rangle}$ and $\varepsilon = \frac{1}{4}$, that
  \begin{align*}
    4K_2\left|r\re\bigl(\langle\partial r,\xi\rangle\langle z,\xi\rangle\bigr) \right|\leq r^2\frac{K_2}{2}|\xi|^2+8K_2|z|^2|\langle\partial r,\xi\rangle|^2
  \end{align*}
  holds.
  Therefore, we obtain
  \begin{align*}
      \Hc{\rho}{\xi}{\xi}\geq(1+2r\psi)\Hc{r}{\xi}{\xi}+2\left|\langle\partial r,\xi\rangle \right|^2\left(K_1-3K_2|z|^2 \right)+r^2\frac{K_2}{2}|\xi|^2.
  \end{align*}
  Next, for each $K:=(K_1,K_2)$, there exists an open neighborhood $U_{K}\subset U$ of $p_0$ such that 
  $$1+2r(z)(K_1+K_2|z|^2)\leq 3/2$$
  holds for all  $z\in U_{K}$. Note that $U_K$ may be chosen such that $b\Omega\cap U=b\Omega\cap U_K$.
  Using (\ref{E:basicestimate1}), we then obtain on $U_K$
  \begin{align*}
    (1+2r\psi)\Hc{r}{\xi}{\xi} 
    &\geq-\frac{3C}{2}\left(r^2|\xi|^2+|\langle\partial r,\xi\rangle|^2\right)
  \end{align*}
  for all $\xi\in\mathbb{C}^n$.
  Therefore,
  \begin{align*}
    \Hc{\rho}{\xi}{\xi}
     \geq 
    r^2 |\xi|^2\left(\frac{K_2}{2}-\frac{3}{2}C \right)+2\left|\langle\partial r,\xi\rangle\right|^2\left(K_1-3K_2|z|^2-\frac{3}{4}C\right)
  \end{align*}
  holds on $U_K$ for all $\xi\in\mathbb{C}^n$.
  Fix $K_2$ such that $K_2>3C$ holds. Let $D$ be the maximum of $|z|$ on $U$, then fix $K_1$ such that $K_1> 3K_2D^2+\frac{3}{4}C$ holds. 
  It follows easily that there exists a positive contant $c>0$ such that
  \begin{align} \label{E:HessianPositivity}
     \Hc{\rho}{\xi}{\xi}\geq c\left(\rho^2|\xi|^2+|\langle\partial\rho,\xi\rangle|^2\right)
  \end{align}
  on $U_K$ for all $\xi\in\mathbb{C}^n$, i.e., $\rho$ is plurisubharmonic on $U_K$.
\end{proof}
\begin{remark}
  Note that the function $\rho$ constructed in the proof of \Cref{L:basicestimate1} is strictly plurisubharmonic on $U_K\setminus b\Omega$, see (\ref{E:HessianPositivity}).
  Moreover, the complex Hessian of $\rho$ is positive definite at strictly pseudoconvex boundary points of $\Omega$.  
  To wit, the complex Hessian of $\rho$ is strictly positive in non-zero complex tangential 
  directions at these boundary points by definition, and it is strictly positive in all directions with a 
  non-vanishing normal component to the boundary by (\ref{E:HessianPositivity}).
\end{remark}

We note that a global version of Lemma \ref{L:basicestimate1} holds if $\Omega$ is bounded and $U$ is an open neighborhood of $b\Omega$. Moreover, by a result of Morrow--Rossi \cite[Lemma 1.3]{Morrow-Rossi75}, see also \cite{Morrow-Rossi77}, any smoothly bounded, strictly pseudoconvex, bounded domain in $\mathbb{C}^n$ admits a smooth defining function which is strictly plurisubharmonic in an open neighborhood of the closure of the domain. The same argument as the one used in the proof of Lemma 1.3 in \cite{Morrow-Rossi75} yields the following.

\begin{corollary}\label{C:psheverywhere}
  Let $\Omega\Subset\mathbb{C}^n$ be a smoothly bounded domain.
  Assume that there exists a smooth defining function $r$ for $\Omega$ such that
   \begin{align*}
     \Hc{r}{\xi}{\xi}\geq -C\left( r^2|\xi|^2+\left|\left\langle\partial r,\xi\right\rangle \right|^2\right)\;\;\forall\xi\in\mathbb{C}^n
   \end{align*}
   holds near $b\Omega$ for some constant $C>0$. Then there exists a smooth defining function $\rho$ for $\Omega$ on an open neighborhood of $\bar{\Omega}$ such that
  \begin{align*}
     \Hc{\rho}{\xi}{\xi}\geq c\left( \rho^2|\xi|^2+\left|\left\langle\partial \rho,\xi\right\rangle \right|^2\right)\;\;\forall\xi\in\mathbb{C}^n
   \end{align*}
   holds for some $c>0$.
 \end{corollary}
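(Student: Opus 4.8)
The plan is to patch the near-boundary function supplied by the (global version of) \Cref{L:basicestimate1} together with a globally strictly plurisubharmonic comparison function by means of a regularized maximum, following the patching argument of Morrow--Rossi \cite{Morrow-Rossi75}. First I would apply the global version of \Cref{L:basicestimate1} — available because $\Omega$ is bounded and the hypothesis holds on a neighborhood of $b\Omega$ — to the given defining function $r$, obtaining an open neighborhood $U$ of $b\Omega$ and a smooth defining function $\rho_1 \colon U \to \R$ with
\begin{align*}
 \Hc{\rho_1}{\xi}{\xi} \geq c_1\left(\rho_1^2\abs{\xi}^2 + \abs{\langle\partial\rho_1,\xi\rangle}^2\right) \quad\text{on } U
\end{align*}
for some $c_1>0$. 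In particular, on any compact subset of $(\Omega\cap U)\setminus b\Omega$ the function $\rho_1^2$ is bounded away from zero, so there $\Hc{\rho_1}{\xi}{\xi}$ is bounded below by a positive multiple of $\abs{\xi}^2$; that is, $\rho_1$ is uniformly strictly plurisubharmonic away from $b\Omega$.

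Next I would fix nested thin tubular neighborhoods $U' \Subset U'' \Subset U$ of $b\Omega$ and a comparison function $\sigma(z) \coloneqq \delta\abs{z}^2 + t_0$, so that $\Hc{\sigma}{\xi}{\xi} = \delta\abs{\xi}^2$. Writing $\varepsilon_1 \coloneqq \sup_{\Omega\cap U'}\abs{\rho_1}$ and $\varepsilon_2 \coloneqq \min_{\overline{\Omega\cap(U\setminus U'')}}\abs{\rho_1}$, one has $0<\varepsilon_1<\varepsilon_2$ once $U'$ is chosen thin enough. I would then take $\eta,\delta>0$ small and $t_0$ in the (nonempty, since $\delta\,(\operatorname{diam}\Omega)^2 < \varepsilon_2-\varepsilon_1-2\eta$) range forcing $\sigma \leq -\varepsilon_1-\eta$ on $\Omega\cap U'$, $\sigma \geq -\varepsilon_2+\eta$ on $\Omega\cap(U\setminus U'')$, and $\sigma<0$ on $\bar{\Omega}$. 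Setting $\rho \coloneqq M_\eta(\rho_1,\sigma)$ on $U$ (with $M_\eta$ a regularized maximum, which is smooth, preserves strict plurisubharmonicity, and equals $\max$ off the $\eta$-diagonal) and $\rho \coloneqq \sigma$ on $\bar{\Omega}\setminus U$, the two prescriptions agree on the overlap: $\rho=\rho_1$ wherever $\rho_1-\sigma\geq\eta$, which contains a full neighborhood of $b\Omega$, and $\rho=\sigma$ wherever $\sigma-\rho_1\geq\eta$, which contains $\Omega\cap(U\setminus U'')$. Hence $\rho$ is a smooth defining function for $\Omega$ on a neighborhood of $\bar{\Omega}$: it coincides with $\rho_1$ near $b\Omega$, so vanishes exactly on $b\Omega$ with nonvanishing differential, and it is negative on $\Omega$, since both $\rho_1$ and $\sigma$ are and the overshoot of $M_\eta$ is controlled by $\eta$.

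It remains to verify the estimate. In the neighborhood of $b\Omega$ where $\rho=\rho_1$ it is inherited verbatim with the constant $c_1$, which is exactly the region where $\rho$ degenerates to $0$, so this is the essential range. On the complementary compact part of $\bar{\Omega}$ the blending of $M_\eta$ is confined to the compact set $\overline{\Omega\cap(U''\setminus U')}$, which lies at positive distance from $b\Omega$; there both $\rho_1$ and $\sigma$ are strictly plurisubharmonic with a uniform positive lower bound, so $\Hc{\rho}{\xi}{\xi} \geq \delta'\abs{\xi}^2$ for some $\delta'>0$ on all of $\bar{\Omega}\setminus U'$ (using $\Hc{\sigma}{\xi}{\xi}=\delta\abs{\xi}^2$ on the remaining deep region). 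Since $\abs{\rho}$ and $\abs{\partial\rho}$ are bounded on the compact set $\bar{\Omega}$, the quantity $\rho^2\abs{\xi}^2+\abs{\langle\partial\rho,\xi\rangle}^2$ is dominated there by a constant multiple of $\abs{\xi}^2$, so the estimate holds with a suitably small constant; taking the minimum of the two constants produces the required $c>0$.

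The main obstacle I anticipate is the bookkeeping of the second step: one must choose the nested neighborhoods and the parameters $\eta,\delta,t_0$ so that the regularized maximum genuinely equals $\rho_1$ on a \emph{full} neighborhood of $b\Omega$ — preserving both the defining-function property and the sharp boundary estimate — while simultaneously confining the transition to $\sigma$ to the region where $\rho_1$ is uniformly strictly plurisubharmonic. Everything else follows routinely from the compactness of $\bar{\Omega}$ and the standard properties of the regularized maximum.
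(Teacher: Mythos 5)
Your proposal is correct and follows essentially the same route as the paper, which proves this corollary by combining the global version of \Cref{L:basicestimate1} (valid since $\Omega$ is bounded and the hypothesis holds on a neighborhood of $b\Omega$) with \enquote{the same argument as the one used in the proof of Lemma 1.3} of Morrow--Rossi \cite{Morrow-Rossi75} --- precisely your regularized-maximum patching with the comparison function $\delta\abs{z}^2+t_0$. The parameter bookkeeping you flag as the main obstacle is exactly the content of that patching argument and works as you describe, since $\Hc{M_\eta(u,v)}{\xi}{\xi}\geq\min\bigl(\Hc{u}{\xi}{\xi},\Hc{v}{\xi}{\xi}\bigr)$ confines all loss to the compact blend region away from $b\Omega$, where both functions are uniformly strictly plurisubharmonic.
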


Whether a given local defining function actually satisfies condition \eqref{E:basicestimate1} in some  open neighborhood of the boundary, can be detected from the behaviour of the complex Hessian of that defining function and its normal derivative  \emph{on the boundary of the domain} as follows.

\begin{proposition}\label{L:pshnearboundaryiff}
  Let $\Omega\subset\mathbb{C}^2$ be a smoothly bounded, pseudoconvex domain, $p_0\in b\Omega$. Then $\Omega$ admits a smooth local defining function which is plurisubharmonic near $p_0$ 
  if and only if there exist
  an open neighborhood $U$ of $p_0$ and a smooth local defining function $r$ for $\Omega$ on $U$ such that
  \begin{align}
    \left|\Hc{r}{L}{N}\right|^2&=\mathcal{O}\left(\Hc{r}{L}{L}\right)\;\;\text{on}\;\;b\Omega\cap U,\;\;\text{and}\label{E:pshnearboundaryiff1}\\
    \left|\nu \Hc{r}{L}{L} \right|^2&=\mathcal{O}\left(\Hc{r}{L}{L}\right)\;\;\text{on}\;\;b\Omega\cap U.\label{E:pshnearboundaryiff2}
  \end{align}
\end{proposition}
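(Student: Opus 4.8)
The plan is to reduce the assertion to the characterization in \Cref{L:basicestimate1}: that result already identifies the existence of a plurisubharmonic local defining function near $p_0$ with the validity of the pointwise inequality \eqref{E:basicestimate1} for some local defining function. It therefore suffices to prove that, for a smooth local defining function $r\colon U\to\R$, the estimate \eqref{E:basicestimate1} holds near $p_0$ if and only if the two boundary conditions \eqref{E:pshnearboundaryiff1} and \eqref{E:pshnearboundaryiff2} hold on $b\Omega\cap U$. Throughout I work in the canonical frame $(L,N)=(L_r,N_r)$ of \Cref{sec:CanonicalVectorFields}; both \eqref{E:pshnearboundaryiff1} and \eqref{E:pshnearboundaryiff2} are $\mathcal{O}$-relations invariant under rescaling $L\mapsto hL$ (using that $\Hcsymbol{r}$ is $\mathcal{C}^\infty(U,\C)$-sesquilinear and, for \eqref{E:pshnearboundaryiff2}, that $\lambda^2=\mathcal{O}(\lambda)$ near $p_0$), so this costs no generality. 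Decomposing $\xi=aL+bN$, one has $\abs{\xi}^2=\tfrac12(\abs{a}^2+\abs{b}^2)$, $\langle\partial r,\xi\rangle=b\,Nr$ with $\abs{Nr}\ge c>0$ near $p_0$, and
\[
\Hc{r}{\xi}{\xi}=\abs{a}^2\,\Hc{r}{L}{L}+2\re\big(a\bar b\,\Hc{r}{L}{N}\big)+\abs{b}^2\,\Hc{r}{N}{N}.
\]

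For the forward implication, let $\rho$ be a smooth local defining function that is plurisubharmonic on an open neighborhood of $p_0$. Then $\rho$ is plurisubharmonic on $b\Omega$ near $p_0$, so the ``in fact'' clause of \Cref{L:pshonboundaryiff} gives \eqref{E:pshnearboundaryiff1} for $\rho$. Moreover $\Hc{\rho}{L}{L}\ge 0$ throughout the neighborhood, since $\rho$ is plurisubharmonic and $L\in\Vhol{}{U}$; part (1) of \Cref{thm:lHospital} applied to this nonnegative function yields $\abs{d\,\Hc{\rho}{L}{L}}^2\le C\,\Hc{\rho}{L}{L}$ on compacta, whence $\abs{\nu\Hc{\rho}{L}{L}}^2\le C\,\Hc{\rho}{L}{L}$, and restricting to $b\Omega$ gives \eqref{E:pshnearboundaryiff2}. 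Hence $\rho$ itself witnesses the two boundary conditions.

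The heart of the matter is the converse. Assume $r$ satisfies \eqref{E:pshnearboundaryiff1} and \eqref{E:pshnearboundaryiff2}. Applying the transverse Taylor expansion \eqref{E:Taylor} to $\Hc{r}{L}{L}$ and $\Hc{r}{L}{N}$ gives, near $p_0$,
\begin{align*}
\Hc{r}{L}{L}(q)&=\lambda(\pi q)+\delta_{b\Omega}(q)\,(\nu\Hc{r}{L}{L})(\pi q)+\mathcal{O}(d_{b\Omega}^2),\\
\abs{\Hc{r}{L}{N}(q)}^2&\le 2\,\abs{\Hc{r}{L}{N}(\pi q)}^2+\mathcal{O}(d_{b\Omega}^2).
\end{align*}
Condition \eqref{E:pshnearboundaryiff2} reads $\abs{\nu\Hc{r}{L}{L}}\le C\sqrt{\lambda}$ on $b\Omega$; inserting this into the first line and using \eqref{E:sclc} together with $\delta_{b\Omega}^2\le Cr^2$ (valid since $r$ and $\delta_{b\Omega}$ are comparable defining functions) produces both $\Hc{r}{L}{L}(q)\ge -Cr^2$ and $\lambda(\pi q)\le 2\,\Hc{r}{L}{L}(q)+C\delta_{b\Omega}^2$. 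Feeding the latter, together with \eqref{E:pshnearboundaryiff1} in the form $\abs{\Hc{r}{L}{N}(\pi q)}^2\le C\lambda(\pi q)$, into the second Taylor line yields the key off-boundary estimate
\[
\abs{\Hc{r}{L}{N}}^2\le C\big(\Hc{r}{L}{L}+r^2\big)\qquad\text{near }p_0.
\]
Setting $\tilde\alpha\coloneqq\Hc{r}{L}{L}+Cr^2\ge 0$, so that $\abs{\Hc{r}{L}{N}}\le\sqrt{C}\sqrt{\tilde\alpha}$, the cross term in the expansion of $\Hc{r}{\xi}{\xi}$ is bounded below by $-\abs{a}^2\Hc{r}{L}{L}-C\abs{a}^2r^2-C\abs{b}^2$ via $2XY\le X^2+Y^2$, while $\abs{b}^2\Hc{r}{N}{N}\ge -C\abs{b}^2$. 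Substituting and recalling $\abs{\langle\partial r,\xi\rangle}^2\gtrsim\abs{b}^2$ gives $\Hc{r}{\xi}{\xi}\ge -C(r^2\abs{\xi}^2+\abs{\langle\partial r,\xi\rangle}^2)$, i.e.\ \eqref{E:basicestimate1}; \Cref{L:basicestimate1} then furnishes the desired plurisubharmonic local defining function.

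I expect the main obstacle to be the converse, and within it the upgrade from the purely boundary hypotheses to the key inequality $\abs{\Hc{r}{L}{N}}^2\le C(\Hc{r}{L}{L}+r^2)$. The delicate point is that \eqref{E:pshnearboundaryiff2} controls the normal derivative of $\lambda$ only to order $\sqrt{\lambda}$, not to order $\lambda$; one must verify that this weaker control is nevertheless enough, after an application of \eqref{E:sclc}, to keep the off-boundary negativity of $\Hc{r}{L}{L}$ confined to the admissible error $r^2$ and, crucially, to prevent it from contaminating the tangential coefficient $\abs{a}^2$ in the final assembly.
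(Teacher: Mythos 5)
Your proof is correct and follows essentially the same route as the paper's: both directions reduce to \Cref{L:basicestimate1}, with the forward implication via \Cref{L:pshonboundaryiff} and part (1) of \Cref{thm:lHospital}, and the converse via the decomposition $\xi=aL+bN$, the normal Taylor expansion \eqref{E:Taylor}, and absorption through \eqref{E:sclc}. Your intermediate off-boundary estimate $\abs{\Hc{r}{L}{N}}^2\le C(\Hc{r}{L}{L}+r^2)$ is just a repackaging of the paper's two boundary-point inequalities, and the exact cancellation of the $\abs{a}^2\Hc{r}{L}{L}$ terms handles the sign issue you flagged, so there is no gap.
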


\begin{proof}
    Let $r \colon U \to \R$ be a smooth local defining function for $\Omega$ near $p_0$, and let $L$ be a nonvanishing
    holomorphic tangential vector field on $U$. Note that the normal derivative $(\nu \Hc{r}{L}{L})_{|_{b\Omega \cap U}}$    
    depends on $L$, while $\Hc{r}{L}{L}_{|_{b\Omega \cap U}}$ depends only on $L_{|_{b\Omega \cap U}}$.
    However, in case that (\ref{E:pshnearboundaryiff1}) holds true, the condition
    (\ref{E:pshnearboundaryiff2}) is in fact independent of the choice of $L$. To see this, let $L'$ be another
    nonvanishing holomorphic tangential vector field on $U$. Then there exist a smooth function $h \colon U \to \C$ 
    and a vector field $E \in \Vhol{}{U}$ such that $L' = hL + rE$. Thus
    \begin{align*}
      \Hc{r}{L'}{L'}=\abs{h}^2\Hc{r}{L}{L}
      +2r\re\bigl(h\Hc{r}{L}{E}\bigr)+r^2\Hc{r}{E}{E},
    \end{align*}
    so that
    \begin{align*}
       \nu\Hc{r}{L'}{L'}
       =\abs{h}^2\nu\Hc{r}{L}{L} + 2\abs{dr}\re\bigl(h\Hc{r}{L}{E}\bigr) + \mathcal{O}(\lambda) \;\;\text{on } 
       b\Omega\cap U.
    \end{align*}
    Since $E=aL + bN$ on $b\Omega \cap U$ for smooth functions $a,b$, it follows from  
    \eqref{E:pshnearboundaryiff1} that $\Hc{r}{L}{E} = \mathcal{O}(\sqrt{\lambda})$ on $b\Omega \cap U$. In particular,
    $\nu\Hc{r}{L'}{L'} = \abs{h}^2\nu\Hc{r}{L}{L} + \mathcal{O}(\sqrt{\lambda}) \;\;\text{on } b\Omega\cap U$,   
    which shows that (\ref{E:pshnearboundaryiff2}) is well-defined.

    Now suppose first that $\Omega$ admits a plurisubharmonic, smooth local defining function $r \colon U \to \R$ 
    near $p_0$. Then \eqref{E:pshnearboundaryiff1} 
    holds by \Cref{L:pshonboundaryiff}.
    Moreover, an application of the first part of \Cref{thm:lHospital},
    with $f=\Hc{r}{L}{L}$, shows that \eqref{E:pshnearboundaryiff2} is satisfied.
    
    On the other hand, let $r \colon U \to \R$ be a smooth local defining function for $\Omega$ near $p_0$ such that
    (\ref{E:pshnearboundaryiff1}) and (\ref{E:pshnearboundaryiff2}) hold true. After possibly shrinking $U$
    in the direction normal to $b\Omega$, 
    we may assume that there exists a smooth map $\pi \colon U \to b\Omega$ 
    such $\abs{q-\pi(q)}=d_{b\Omega}(q)$.
    Fix $q\in U$, and set $p \coloneqq \pi(q)$. Moreover, fix $\xi\in\mathbb{C}^2$, and write $\xi=aL(q)+bN(q)$ 
    for some $a = a(q,\xi), b=b(q,\xi)\in\mathbb{C}$. Then
    \begin{align*}
      \Hc{r}{\xi}{\xi}(q)=|a|^2\Hc{r}{L}{L}(q)+2\re&(a\bar{b}\Hc{r}{L}{N}(q))+|b|^2\Hc{r}{N}{N}(q).
    \end{align*}
    In view of \eqref{E:Taylor}, with $f = \Hc{r}{L}{L}$, the Taylor expansion at $p$ in direction $\nu$ gives
    \begin{align*}  
     \Hc{r}{\xi}{\xi}(q)&=|a|^2\left(\Hc{r}{L}{L}(p)+\delta_{b\Omega}(q)(\nu\Hc{r}{L}{L})(p)
     +\mathcal{O}(d_{b\Omega}^2)(q)\right)\\
     &+2\re\left(a\bar{b}(\Hc{r}{L}{N}(p) +\mathcal{O}(d_{b\Omega})(q))\right)\\
     &+\abs{b}^2\mathcal{O}(1)(q),
    \end{align*}
    where the $\mathcal{O}$-terms are functions that do not depend on $q$ and $\xi$. 
    Fix an open neighborhood $V \Subset U$ of $p_0$.
    Using \eqref{E:sclc}, it follows from \eqref{E:pshnearboundaryiff2} 
    and \eqref{E:pshnearboundaryiff1} that there exist constants $C_1,C_2 > 0$ such that on $V$
    \begin{align*}
     \textstyle\frac{1}{2} \Hc{r}{L}{L}(p) + \delta_{b\Omega}(q)(\nu\Hc{r}{L}{L})(p) 
     &\ge -C_1d_{b\Omega}^2(q),\\
     \textstyle\frac{1}{2} \abs{a}^2 \Hc{r}{L}{L}(p) + 2\re\left(a\bar{b}\Hc{r}{L}{N}(p)\right) 
     &\ge - C_2\abs{b}^2.
    \end{align*}  
    Hence, there exists a constant $C_3>0$, which does not depend on $q$ and $\xi$, such that
    \begin{align*}
     \Hc{r}{\xi}{\xi}(q) \ge -C_3(|a|^2d_{b\Omega}^2(q) + \abs{b}^2)\;\;\sjump\forall\; \xi\in\mathbb{C}^n\;\;
       \forall\;q\in V.
    \end{align*}  
    Since $|a|\leq|\xi|$, $|b| = \mathcal{O}(|\langle\partial r,\xi\rangle|)$ on $U$, and 
    $d_{b\Omega}^2 = \mathcal{O}(r^2)$ on $U$, it follows that $r_{|_{V}}$ satisfies \eqref{E:basicestimate1}. 
    The claim thus follows from \Cref{L:basicestimate1}.
\end{proof}

\begin{remark} \label{R:pshnearboundaryiff}
The proofs of \Cref{L:basicestimate1} and \Cref{L:pshnearboundaryiff} imply the following:  
if \eqref{E:pshnearboundaryiff1} and \eqref{E:pshnearboundaryiff2} hold true, then 
for every $K \Subset b\Omega \cap U$ there exist an open neighborhood $V \subset U$ of $K$ and a smooth local defining function for $\Omega$ on $U$ that is plurisubharmonic on $V$.
\end{remark}

\begin{remark}
Versions of \Cref{L:pshonboundaryiff} and \Cref{L:pshnearboundaryiff} can also be shown for domains in $\mathbb{C}^n$, $n > 2$. In this case, $L$ has to be substituted by a frame $\{L_j\}_{j=1}^{n-1}$ for $T(b\Omega)^{1,0}$ near $p_0$.
\end{remark}

Condition \eqref{E:pshnearboundaryiff2} may always be achieved near boundary 
points of type $4$, independent of whether the smoothly bounded, pseudoconvex 
domain in consideration actually admits a smooth local defining function which is plurisubharmonic on the boundary.

\begin{lemma}\label{L:pshnearboundaryiff2}
    Let $\Omega\subset\mathbb{C}^2$ be a smoothly bounded, pseudoconvex domain, $p_0 \in b\Omega$. 
    If $b\Omega$ is of type $4$ at $p_0$, then for every nonvanishing holomorphic tangential vector field $L$ near $p_0$ 
    there exists a smooth local defining function $\rho \colon U \to \R$ for $\Omega$ near $p_0$ such that
    \begin{align}\label{E:pshnearboundaryiff2rho}
       \left|\nu \Hc{\rho}{L}{L} \right|^2&=\mathcal{O}\left(\Hc{\rho}{L}{L}\right)\;\;\text{on}\;\;b\Omega\cap U.
    \end{align}
\end{lemma}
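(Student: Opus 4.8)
The plan is to construct the defining function in the form $\rho = re^{h}$, where $r\colon U\to\R$ is an arbitrary fixed smooth local defining function for $\Omega$ near $p_0$, $\lambda \coloneqq \Hc{r}{L}{L}_{|_{b\Omega\cap U}}$, and $h\colon U\to\R$ is a smooth real-valued function still to be determined. First I would apply the product rule for the complex Hessian to $\rho = re^{h}$ on all of $U$, then differentiate with the real normal field $\nu$ and restrict to $b\Omega$; using $Lr = \bar{L}r = 0$ and $r = 0$ on $b\Omega$, this yields
\begin{align*}
 \nu\Hc{\rho}{L}{L} = e^{h}\Big[(\nu h)\lambda &+ \nu\Hc{r}{L}{L} + (\nu(Lr))(\bar{L}h) + (\nu(\bar{L}r))(Lh) \\
 &+ (\nu r)\Hc{h}{L}{L} + (\nu r)(Lh)(\bar{L}h)\Big].
\end{align*}
Since $e^{h}$ is bounded away from $0$ and $\infty$, the asserted estimate $|\nu\Hc{\rho}{L}{L}|^{2} = \mathcal{O}(\Hc{\rho}{L}{L})$ is equivalent to the bracket being $\mathcal{O}(\sqrt{\lambda})$ on $b\Omega\cap U$.

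The goal is thus reduced to finding a smooth real $h$ satisfying
\begin{align*}
 Lh = \mathcal{O}(\sqrt{\lambda}) \quad\text{and}\quad \Hc{h}{L}{L} = F + \mathcal{O}(\sqrt{\lambda}) \quad\text{on } b\Omega\cap U, \qquad F \coloneqq -\tfrac{1}{\nu r}\,\nu\Hc{r}{L}{L},
\end{align*}
where $F$ is real and smooth because $\nu$ is real and $\nu r = \abs{\GRAD r} > 0$. Granting these two relations, every term in the bracket is $\mathcal{O}(\sqrt{\lambda})$: the first and sixth because $(\nu h)\lambda$ and $(\nu r)(Lh)(\bar{L}h)$ are $\mathcal{O}(\lambda)$; the third and fourth because $Lh, \bar{L}h = \mathcal{O}(\sqrt{\lambda})$ (recall $\bar{L}h = \overline{Lh}$); and the second together with the fifth because $\nu\Hc{r}{L}{L} + (\nu r)F = 0$ by the choice of $F$. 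The hypothesis that $b\Omega$ is of type $4$ at $p_0$ enters only to guarantee solvability of this system.

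The heart of the argument is to solve the system, which I would do explicitly via the ansatz $h \coloneqq g\lambda$ with $g \coloneqq 4F/\big((XX+YY)\lambda\big)$, where $X,Y$ are the real vector fields with $L = \tfrac12(X+iY)$. The crucial point is that $g$ is smooth near $p_0$: since $b\Omega$ is of type $4$ at $p_0$, the second characterization of finite type forces some second-order derivative $L_1L_2\lambda$, $L_i\in\{L,\bar{L}\}$, to be nonzero at $p_0$, whence \Cref{T:basictype4estimate} yields $L\bar{L}\lambda(p_0) > 0$; because $[X,Y]\lambda(p_0) = 0$ at the minimum $p_0$, one has $(XX+YY)\lambda(p_0) = 4L\bar{L}\lambda(p_0) > 0$, so the denominator is positive on a neighborhood of $p_0$ after shrinking $U$. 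Part (2) of \Cref{thm:lHospital} gives $V\lambda = \mathcal{O}(\sqrt{\lambda})$ for every tangential real vector field $V$; hence $Lh = (Lg)\lambda + g(L\lambda) = \mathcal{O}(\sqrt{\lambda})$, the first relation. For the second I would write $\Hc{h}{L}{L} = L\bar{L}h - (\nabla_L\bar{L})h$ via \eqref{equ:LNbarf}, expand $L\bar{L}(g\lambda)$, and discard every term carrying a factor $\lambda$, $L\lambda$, or $\bar{L}\lambda$, leaving $g\,L\bar{L}\lambda + \mathcal{O}(\sqrt{\lambda})$; since $\IM(L\bar{L}\lambda) = -\tfrac14[X,Y]\lambda = \mathcal{O}(\sqrt{\lambda})$, this equals $g\cdot\tfrac14(XX+YY)\lambda + \mathcal{O}(\sqrt{\lambda}) = F + \mathcal{O}(\sqrt{\lambda})$.

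The one subordinate step that needs care is the claim $(\nabla_L\bar{L})h = \mathcal{O}(\sqrt{\lambda})$. Because the Chern connection preserves type, $\nabla_L\bar{L} \in \Vahol{}{U}$, so I can write $\nabla_L\bar{L} = b\bar{L} + d\bar{N}$ with $b,d$ smooth; then $(\nabla_L\bar{L})h = b(\bar{L}h) + d(\bar{N}h)$, and as $\bar{L}h = \mathcal{O}(\sqrt{\lambda})$ it suffices to show $d = \mathcal{O}(\lambda)$. Applying \eqref{equ:LNbarf} with $f=r$ gives $(\nabla_L\bar{L})r = L\bar{L}r - \Hc{r}{L}{L}$, which restricts to $-\lambda$ on $b\Omega$ since $\bar{L}r \equiv 0$ there; evaluating the frame expansion against $r$ and using $\bar{L}r = 0$ and $\bar{N}r = \abs{\partial r}/\sqrt{2}\neq 0$ on $b\Omega$ then gives $d = -\lambda/\bar{N}r = \mathcal{O}(\lambda)$. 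This also shows that $\Hc{h}{L}{L}_{|_{b\Omega}}$ is, modulo $\mathcal{O}(\sqrt{\lambda})$, independent of how $h$ is extended off $b\Omega$ (the only normal derivative enters through the coefficient $d$), so it is legitimate to construct $h = g\lambda$ on $b\Omega\cap U$ and extend it smoothly to $U$. The main obstacle is conceptual rather than computational: one must recognize that the type-$4$ positivity $L\bar{L}\lambda(p_0) > 0$ is precisely what lets a single multiplier $h = g\lambda$ solve the system, and that this choice works uniformly at both strict and weak type $4$ points, where $\lambda$ may be far more degenerate in the remaining tangential directions.
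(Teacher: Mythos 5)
Your argument is correct, and its skeleton coincides with the paper's proof: set $\rho = re^{h}$, reduce \eqref{E:pshnearboundaryiff2rho} to the system $Lh=\mathcal{O}(\sqrt{\lambda})$ and $\Hc{h}{L}{L}=F+\mathcal{O}(\sqrt{\lambda})$ on $b\Omega$ with exactly the datum $F=-\frac{1}{\abs{dr}}\nu\Hc{r}{L}{L}$ of \eqref{E:FData}, and dispose of the connection term by showing that the normal component of $\nabla_L\bar{L}$ is $\mathcal{O}(\lambda)$ (your computation $d=-\lambda/\bar{N}r$ is an explicit version of the paper's remark following $\Hc{r}{L}{L}=-(\nabla_L\bar{L})r$). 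The genuine difference is how the system \eqref{E:2ndPDE} is solved: the paper takes $h=FA/B$ with $A=\abs{L\Lambda}^2$ and $B=\Lambda^2+\abs{L\bar{L}\Lambda}^2+\abs{LL\Lambda}^2$ for the ambient extension $\Lambda=\Hc{r}{L}{L}$, see \eqref{E:mdefinition}, whereas you take $h=g\lambda$ on $b\Omega$ with $g=4F/((XX+YY)\lambda)$ and then extend smoothly. Both work, with different trade-offs. Your denominator needs the strict positivity $(L\bar{L}\lambda)(p_0)>0$, which you correctly extract from type $4$ via \Cref{T:basictype4estimate} together with $[X,Y]\lambda(p_0)=0$ and conjugation symmetry; one can get it slightly more directly from \Cref{L:2ndderivativecondition}, since vanishing of the trace $(XX+YY)\lambda(p_0)$ of the positive semi-definite $(X,Y)$-block forces all second-order tangential derivatives of $\lambda$ to vanish at $p_0$, contradicting characterization (2) of type $4$. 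The paper's $B$, by contrast, needs only the nonvanishing of $(\Lambda,L\bar{L}\Lambda,LL\Lambda)$, which follows from type $4$ without any sign information, requires no extension step since $A/B$ is defined on all of $U$, and stays bounded below even at nearby strictly pseudoconvex points where $\Lambda^2>0$; this uniformity is precisely what lets the paper's $h$ be defined on a full neighborhood of $b\Omega$ when $c_p\le 4$ everywhere, yielding the global corollary after \Cref{T:pshnearboundary}, while your $g$ is only guaranteed to exist near the single point $p_0$ --- enough for the local lemma, but not for the globalization. Your treatment of the extension-dependence of $\Hc{h}{L}{L}{}_{|_{b\Omega}}$ is sound (only the coefficient $d=\mathcal{O}(\lambda)$ sees normal derivatives of $h$), and in one respect you are more careful than the written proof: for a general tangential $L$, which need not satisfy $Lr\equiv 0$ off $b\Omega$, the identity for $\Hc{\rho}{L}{L}$ acquires the cross terms $(Lr)\bar{L}h+(\bar{L}r)Lh$, which you retain and correctly estimate after applying $\nu$, using $Lh,\bar{L}h=\mathcal{O}(\sqrt{\lambda})$ from \Cref{thm:lHospital}.
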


\begin{proof}
  Let $r \colon U \to \R$ be a smooth local defining function for $\Omega$ near $p_0$, and let $L$ be a 
  nonvanishing holomorphic tangential vector field near $p_0$.
  After possibly shrinking $U$, we may assume that $dr \neq 0$, and that $L$ is defined on $U$.
  Set $\Lambda \coloneqq \Hc{r}{L}{L}$ and 
  $\lambda \coloneqq \Lambda_{|_{b\Omega\cap U}}$.
  We claim that, after possibly shrinking $U$, for every smooth function
  $F \colon U \to \mathbb{R}$, there exists a smooth function $h \colon U\to\mathbb{R}$ such that 
  \begin{align}\label{E:2ndPDE}
  \begin{cases}
      &Lh=\mathcal{O}(\sqrt{\lambda})\\
      &L\bar{L}h=F+\mathcal{O}(\sqrt{\lambda})
  \end{cases}
  \;\;\text{ on }b\Omega\cap U.
  \end{align}
  Indeed, define smooth functions $A,B \colon U \to \R$ by 
  \begin{align*}
   A \coloneqq |L\Lambda|^2 \quad\text{and}\quad
   B \coloneqq \Lambda^2+\abs{L\bar{L}\Lambda}^2+\abs{LL\Lambda}^2,
  \end{align*}
  and set
  \begin{align}\label{E:mdefinition}
  h \coloneqq F \frac{A}{B}.
  \end{align}
  Since $\Omega$ is of type 4 at $p_0$, after possibly shrinking $U$, we can assume that $B>0$ on $U$. 
  Thus, $h$ is well-defined. Moreover, since $L\Lambda = \mathcal{O}(\sqrt{\lambda})$ on $b\Omega \cap U$ by
  \eqref{equ:VectorFieldlHospital}, 
  it follows that $A$ and $LA$ are of class $\mathcal{O}(\sqrt{\lambda})$ on $b\Omega \cap U$, and, in particular, that
  \begin{align} \label{E:mestimate}
  Lh = \mathcal{O}(\sqrt{\lambda}) \quad\text{on } b\Omega \cap U.
  \end{align}
  Moreover,
  $$
  L\bar{L}h = F \frac{L\bar{L}A}{B} + \mathcal{O}(\sqrt{\lambda}) = F\frac{\abs{L\bar{L}\Lambda}^2+\abs{LL\Lambda}^2}{B} + \mathcal{O}(\sqrt{\lambda}) = F + \mathcal{O}(\sqrt{\lambda}) \;\;\text{ on }b\Omega\cap U.
  $$
  
  Now let $h \colon U \to \R$ be an arbitrary smooth function, and set $\rho \coloneqq re^h$. Then
  \begin{align*}
    \Hc{\rho}{L}{L} = e^h\left(\Hc{r}{L}{L}+ r\left(\Hc{h}{L}{L} + \abs{Lh}^2\right) \right),
  \end{align*}
  and thus
  \begin{align*}
    \nu\Hc{\rho}{L}{L} = e^h\left(\nu\Hc{r}{L}{L} + \abs{dr}\left(\Hc{h}{L}{L} + \abs{Lh}^2\right) \right) + \mathcal{O}(\lambda) \quad\text{on } b\Omega \cap U.
  \end{align*} 
  Since $\Hc{h}{L}{L} = L\bar{L}h - (\nabla_L\bar{L})h$, and since on $b\Omega \cap U$ it follows from $\Hc{r}{L}{L} = -(\nabla_L\bar{L})r$ 
  on $b\Omega \cap U$ that the component of $\nabla_L\bar{L}$ normal to $b\Omega$ is of the form 
  $\mathcal{O}(\lambda)$, it follows from \eqref{E:mestimate} that
  \begin{align*}
    \nu\Hc{\rho}{L}{L} = e^h\left(\nu\Hc{r}{L}{L} + \abs{dr}L\bar{L}h  \right) + \mathcal{O}(\sqrt{\lambda}) \quad\text{on } b\Omega \cap U.
  \end{align*} 
  Thus, if $h$ is the solution for \eqref{E:2ndPDE} with 
  \begin{align} \label{E:FData}
  F \coloneqq -\frac{1}{\abs{dr}}\nu\Hc{r}{L}{L},
  \end{align}
  then $\rho$ satisfies \eqref{E:pshnearboundaryiff2rho}.  
\end{proof}

We now can prove the main result of this section.

\begin{theorem}\label{T:pshnearboundary}
  Let $\Omega\subset\mathbb{C}^2$ be a smoothly bounded, pseudoconvex domain.
  Suppose $p_0\in b\Omega$ is such that $c_{p_0}=4$. If $\Omega$ admits a 
  smooth local defining function near $p_0$ 
  which is plurisubharmonic on $b\Omega$ near $p_0$,
  then $\Omega$ admits a smooth local defining function near $p_0$ which is plurisubharmonic.
\end{theorem}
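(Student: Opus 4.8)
The plan is to reduce everything to the two non-geometric characterizations already at our disposal, namely the boundary criterion \Cref{L:pshonboundaryiff} and the near-boundary criterion \Cref{L:pshnearboundaryiff}, glued together by the PDE-solving \Cref{L:pshnearboundaryiff2}. By \Cref{L:pshnearboundaryiff}, it suffices to exhibit a \emph{single} smooth local defining function $\rho$ near $p_0$ satisfying both \eqref{E:pshnearboundaryiff1} and \eqref{E:pshnearboundaryiff2} simultaneously. The hypothesis that $\Omega$ carries a defining function plurisubharmonic on $b\Omega$ near $p_0$ will supply \eqref{E:pshnearboundaryiff1}, while the type-$4$ hypothesis $c_{p_0}=4$ will supply \eqref{E:pshnearboundaryiff2}; the real work is to arrange them for one and the same $\rho$.

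First I would fix a nonvanishing holomorphic tangential vector field $L$ near $p_0$ together with its companion $N$, and choose a smooth local defining function $r_0$ for $\Omega$ that is plurisubharmonic on $b\Omega$ near $p_0$; such an $r_0$ exists by assumption. By \Cref{L:pshonboundaryiff} (equivalently, by \eqref{E:pshonboundary}), this $r_0$ satisfies \eqref{E:pshnearboundaryiff1}, that is $\Hc{r_0}{L}{N} = \mathcal{O}(\sqrt{\lambda})$ on $b\Omega$, where $\lambda \coloneqq \Hc{r_0}{L}{L}_{|_{b\Omega}}$. Next, since $c_{p_0}=4$, I would apply \Cref{L:pshnearboundaryiff2} to $r_0$ and this same $L$, obtaining a modified defining function $\rho \coloneqq r_0 e^h$ that satisfies \eqref{E:pshnearboundaryiff2rho}, i.e. exactly condition \eqref{E:pshnearboundaryiff2}.

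It then remains to check that the conformal change $r_0 \mapsto r_0 e^h$ does not destroy \eqref{E:pshnearboundaryiff1}. Here the crucial point is that the function $h$ furnished by \Cref{L:pshnearboundaryiff2} solves the first equation of the system \eqref{E:2ndPDE}, so that $Lh = \mathcal{O}(\sqrt{\lambda})$ on $b\Omega$, see \eqref{E:mestimate}. Using the boundary identity $\Hc{\rho}{L}{N} = e^h(\Hc{r_0}{L}{N} + Lh\cdot\bar{N}r_0)$ on $b\Omega$ (the same computation as in the proof of \Cref{T:type4pshonboundary}), together with $\bar{N}r_0 = \tfrac{1}{2}\abs{dr_0}$ bounded and nonvanishing, both summands are of class $\mathcal{O}(\sqrt{\lambda})$, so $\Hc{\rho}{L}{N} = \mathcal{O}(\sqrt{\lambda})$. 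Since also $\Hc{\rho}{L}{L} = e^h\lambda$ on $b\Omega$ is comparable to $\lambda$, this yields $\abs{\Hc{\rho}{L}{N}}^2 = \mathcal{O}(\Hc{\rho}{L}{L})$, i.e. \eqref{E:pshnearboundaryiff1} for $\rho$. With both \eqref{E:pshnearboundaryiff1} and \eqref{E:pshnearboundaryiff2} established for $\rho$, an application of \Cref{L:pshnearboundaryiff} produces the desired plurisubharmonic smooth local defining function near $p_0$.

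The main obstacle, and the precise place where the type-$4$ assumption is used, is this compatibility issue: one must achieve \eqref{E:pshnearboundaryiff2} by a multiplier $e^h$ that simultaneously preserves \eqref{E:pshnearboundaryiff1}. This succeeds because at a type-$4$ point the denominator $B$ in \eqref{E:mdefinition} is nonvanishing, so the correcting function $h$ exists \emph{with} the strong tangential control $Lh = \mathcal{O}(\sqrt{\lambda})$; it is exactly this estimate — not any direct algebraic manipulation of $\Hc{r_0}{L}{N}$ — that guarantees the off-diagonal condition survives the modification. I expect this to be the only delicate step, the remaining arguments being bookkeeping of neighborhoods and direct substitution into the two criteria.
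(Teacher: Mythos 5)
Your proposal is correct and takes essentially the same route as the paper's own proof: the paper likewise sets $\rho = re^h$ with $h$ the solution of \eqref{E:2ndPDE} for the data \eqref{E:FData} (the construction of \Cref{L:pshnearboundaryiff2}), obtains \eqref{E:pshnearboundaryiff2} as in that lemma, and then uses \Cref{L:pshonboundaryiff} together with the identity $\Hc{\rho}{L}{N} = e^h\bigl(\Hc{r}{L}{N} + Lh\cdot\bar{N}r\bigr)$ on $b\Omega$ and the estimate \eqref{E:mestimate} to see that \eqref{E:pshnearboundaryiff1} survives the modification, concluding via \Cref{L:pshnearboundaryiff}. Your diagnosis of the compatibility of the multiplier $e^h$ with the off-diagonal condition, guaranteed precisely by $Lh = \mathcal{O}(\sqrt{\lambda})$, is exactly the point the paper's proof turns on.
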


\begin{proof}
  Let $r \colon U \to \R$ be a smooth local defining function for $\Omega$ near $p_0$ such that 
  $r$ is plurisubharmonic on $b\Omega\cap U$. 
  After possibly shrinking $U$, we can assume that $c_p \le 4$ for all $p \in b\Omega \cap U$.
  Set $\rho \coloneqq re^h$, where $h$ is the solution to 
  \eqref{E:2ndPDE} with $F$ given as in \eqref{E:FData}. Then, as in the proof of
  \Cref{L:pshnearboundaryiff2}, we see that $\rho$ satisfies \eqref{E:pshnearboundaryiff2}. 
  On the other hand, since, by \Cref{L:pshonboundaryiff}, $r$ satisfies
  \eqref{E:pshnearboundaryiff1}, and since 
  $$
  \Hc{\rho}{L}{N} = e^h\left(\Hc{r}{L}{N} + Lh \cdot \bar{N}r \right)\;\;\text{on } b\Omega\cap U,
  $$ 
  it follows with \eqref{E:mestimate} that $\rho$ satisfies \eqref{E:pshnearboundaryiff1}. 
  The claim thus follows from \Cref{L:pshnearboundaryiff}.  
\end{proof}

 Note that, if $r$ is a smooth defining function for a smoothly bounded, pseudoconvex domain $\Omega \subset \C^2$ such that $\Omega$ is of type $4$ at all its weakly pseudoconvex boundary points, then the function $h$ defined in \eqref{E:mdefinition} and \eqref{E:FData} is defined in an open neighborhood of $b\Omega$. Moreover, the function $h$ solves \eqref{E:2ndPDE} on $b\Omega$. In view of \Cref{R:pshnearboundaryiff}, this implies the following global result.
\begin{corollary}
  Let $\Omega\Subset\mathbb{C}^2$ be a smoothly bounded, pseudoconvex domain. Suppose that $\Omega$ has a smooth defining function which is plurisubharmonic on $b\Omega$, and that $c_p\leq 4$ for all
  $p\in b\Omega$. Then $\Omega$ admits a smooth defining function which is plurisubharmonic in an open neighborhood of $b\Omega$.
\end{corollary}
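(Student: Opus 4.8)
The plan is to run the proof of \Cref{T:pshnearboundary} globally, the key observation being that under the hypothesis $c_p \le 4$ for all $p \in b\Omega$ the multiplier $h$ constructed in \Cref{L:pshnearboundaryiff2} is defined on an entire neighborhood of $b\Omega$, so that a single modified defining function works everywhere at once. Concretely, I would start from the given smooth defining function $r \colon U \to \R$ for $\Omega$ that is plurisubharmonic on $b\Omega$, where $U$ is an open neighborhood of $b\Omega$; after shrinking $U$ we may assume $dr \neq 0$ on $U$, so that the canonical field $L = L_r$ from \Cref{sec:CanonicalVectorFields} is globally defined and nonvanishing on $U$. Writing $\Lambda \coloneqq \Hc{r}{L}{L}$ and $\lambda \coloneqq \Lambda_{|_{b\Omega}}$, I would set $A \coloneqq \abs{L\Lambda}^2$, $B \coloneqq \Lambda^2 + \abs{L\bar{L}\Lambda}^2 + \abs{LL\Lambda}^2$, and $F \coloneqq -\frac{1}{\abs{dr}}\nu\Hc{r}{L}{L}$, exactly as in \eqref{E:mdefinition}--\eqref{E:FData}.

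The crucial, and really only, global point is to verify that $B > 0$ on all of $b\Omega$, so that $h \coloneqq FA/B$ is a well-defined smooth function near $b\Omega$. At a strictly pseudoconvex point $p$ one has $\Lambda(p) = \lambda(p) > 0$, hence $B(p) > 0$. At a weakly pseudoconvex point $p$ the hypothesis $c_p \le 4$ forces $c_p = 4$; since $\lambda$ attains a local minimum at $p$, all its first-order derivatives vanish there, while the type-$4$ characterization supplies a nonvanishing second-order derivative of $\lambda$ at $p$. Combined with Theorem~\ref{T:basictype4estimate}, which gives $(L\bar{L}\lambda)(p) \ge \abs{(LL\lambda)(p)}$, this forces $(L\bar{L}\lambda)(p) > 0$, and since $L$ is tangential this yields $\abs{(L\bar{L}\Lambda)(p)}^2 > 0$ and hence $B(p) > 0$. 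Thus $B > 0$ on the compact set $b\Omega$, and therefore on some open neighborhood $U' \subset U$ of $b\Omega$, so $h$ is smooth on $U'$. I expect this positivity of $B$, driven precisely by the $c_p \le 4$ hypothesis, to be the main (though short) obstacle; once it is established, the construction is genuinely global and requires no patching of multipliers.

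With $h$ in hand I would set $\rho \coloneqq r e^{h}$ and check, exactly as in the proofs of \Cref{L:pshnearboundaryiff2} and \Cref{T:pshnearboundary}, that $h$ solves \eqref{E:2ndPDE} on $b\Omega$ and that $Lh = \mathcal{O}(\sqrt{\lambda})$ on $b\Omega$ by \eqref{E:mestimate}. Consequently $\rho$ satisfies both boundary conditions on all of $b\Omega$: condition \eqref{E:pshnearboundaryiff2} follows from the solution of \eqref{E:2ndPDE} together with the computation of $\nu\Hc{\rho}{L}{L}$, while condition \eqref{E:pshnearboundaryiff1} follows from $\Hc{\rho}{L}{N} = e^{h}\big(\Hc{r}{L}{N} + Lh \cdot \bar{N}r\big)$, using that $r$ already satisfies \eqref{E:pshnearboundaryiff1} by \Cref{L:pshonboundaryiff}. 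The essential feature is that all of this now holds at every boundary point simultaneously for the single function $\rho$.

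Finally I would globalize the passage from the boundary conditions to plurisubharmonicity. Running the part of the proof of \Cref{L:pshnearboundaryiff} that derives \eqref{E:basicestimate1} from \eqref{E:pshnearboundaryiff1} and \eqref{E:pshnearboundaryiff2}, and covering the compact boundary $b\Omega$ by finitely many of the relatively compact neighborhoods produced there, I would obtain that $\rho$ satisfies \eqref{E:basicestimate1} with a uniform constant $C > 0$ on an open neighborhood of $b\Omega$. The global version of \Cref{L:basicestimate1} noted in the text, realized by the explicit modification $\rho + \rho^2\psi$ with $\psi = K_1 + K_2\abs{z}^2$ whose constants are uniform because $\Omega$ is bounded, then produces a smooth defining function for $\Omega$ that is plurisubharmonic on an open neighborhood of $b\Omega$, as claimed.
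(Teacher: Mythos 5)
Your proposal is correct and follows essentially the same route as the paper: the paper's proof is exactly the observation that, under the hypothesis $c_p \le 4$, the multiplier $h$ from \eqref{E:mdefinition} and \eqref{E:FData} is defined on a whole neighborhood of $b\Omega$ and solves \eqref{E:2ndPDE} on all of $b\Omega$, after which \Cref{R:pshnearboundaryiff} (the globalized form of \Cref{L:basicestimate1} and \Cref{L:pshnearboundaryiff} for the compact boundary) yields the conclusion. Your explicit verification that $B>0$ at weakly pseudoconvex points, via the finite-type characterization together with \Cref{T:basictype4estimate} forcing $(L\bar{L}\lambda)(p)>0$, correctly fills in the one step the paper leaves implicit.
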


An analogon to Theorem \ref{T:pshnearboundary} near higher order boundary points is not apparent, 
although condition \eqref{E:pshnearboundaryiff2} always holds \emph{at} boundary points 
of type larger than $4$, whenever $r$ is a defining function that is plurisubharmonic on the boundary, as shown by the next Lemma.

\begin{lemma}\label{L:type6result}
   Let $\Omega\subset\mathbb{C}^2$ be a smoothly bounded pseudoconvex domain. Let $p_0\in b\Omega$ such that $c_{p_0}\geq 6$. If $r$ is a smooth local
   defining function of $b\Omega$ near $p_0$ which is plurisubharmonic on $b\Omega$ near $p_0$, 
   and if $L$ is a nonvanishing holomorphic tangential vector field near $p_0$, then
   \begin{align*}
    \nu \Hc{r}{L}{L}(p_0)=0.
   \end{align*}
\end{lemma}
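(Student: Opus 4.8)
The plan is to reduce the assertion to the single identity $N\Hc{r}{L}{L}(p_0)=0$ and to obtain the latter by commuting the normal derivative into a tangential one. Since $r$ is plurisubharmonic on $b\Omega$ near $p_0$, \Cref{L:pshonboundaryiff} gives $\Hc{r}{L}{N}=\mathcal{O}(\sqrt{\lambda})$ on $b\Omega\cap U$, where $\lambda\coloneqq\Hc{r}{L}{L}_{|_{b\Omega}}$; as $\lambda(p_0)=0$, this yields $\Hc{r}{L}{N}(p_0)=\Hc{r}{N}{L}(p_0)=0$. Using these vanishings together with $\Hc{r}{L}{L}(p_0)=0$ and $r(p_0)=0$, a direct computation shows that $\nu\Hc{r}{L}{L}(p_0)$ is only multiplied by a positive factor when $L$ is replaced by any other nonvanishing tangential holomorphic field, so I may assume $L=L_r$ and $N=N_r$. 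Finally, since $\nu=N+\bar{N}$ (see \Cref{sec:CanonicalVectorFields}) and $\Lambda\coloneqq\Hc{r}{L}{L}$ is real-valued, one has $\nu\Lambda(p_0)=2\RE\bigl(N\Lambda\bigr)(p_0)$, so it suffices to prove $N\Lambda(p_0)=0$.

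Before the main step I would record the vanishing supplied by the type hypothesis. Because $c_{p_0}\ge 6$, the second characterization of finite type in \Cref{sec:preliminaries} shows that $\lambda$ and all its tangential derivatives up to order $3$ vanish at $p_0$; hence $\lambda=\mathcal{O}(d_{b\Omega}^4)$ and $\sqrt{\lambda}=\mathcal{O}(d_{b\Omega}^2)$ along $b\Omega$. Combined with $\Hc{r}{N}{L}=\mathcal{O}(\sqrt{\lambda})$, this forces $\Hc{r}{N}{L}_{|_{b\Omega}}$ to vanish to order at least $2$ at $p_0$, and therefore $\bigl(L\,\Hc{r}{N}{L}\bigr)(p_0)=0$, since $L_{p_0}$ is tangent to $b\Omega$.

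The key step is the identity, valid for $L=L_r$ and $N=N_r$,
\[
  N\Lambda = L\,\Hc{r}{N}{L} + \Hc{r}{[N,L]}{L} + \Hc{r}{L}{\nabla_{\bar{N}}L} - \Hc{r}{N}{\nabla_{\bar{L}}L}.
\]
I would derive it by writing $\Lambda=\sum_{j,k}r_{z^j\bar{z}^k}L^j\bar{L}^k$, differentiating along $N=\sum_mN^m\frac{\partial}{\partial z^m}$, and exploiting the symmetry $r_{z^mz^j\bar{z}^k}=r_{z^jz^m\bar{z}^k}$ of the third-order partials --- the flatness of the standard Chern connection --- to trade the normal derivative $N$ acting on the Hessian for the tangential derivative $L$; the remaining frame-derivative terms assemble into the stated covariant derivatives.

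It remains to evaluate at $p_0$. The first term vanishes by the second paragraph. For the second and third terms I expand $[N,L],\nabla_{\bar{N}}L\in\Vhol{}{U}$ in the frame $(L,N)$; they become linear combinations of $\Hc{r}{L}{L}(p_0)$, $\Hc{r}{N}{L}(p_0)$ and $\Hc{r}{L}{N}(p_0)$, all of which are $0$. For the last term I would use the connection formula from the proof of \Cref{thm:stricttype4incoordinates}: conjugating $\nabla_L\bar{L}=\tfrac{1}{\abs{r_z}^2+\abs{r_w}^2}\bigl(\Hc{r}{L}{N}\bar{L}-\Hc{r}{L}{L}\bar{N}\bigr)$ gives $\nabla_{\bar{L}}L=\tfrac{1}{\abs{r_z}^2+\abs{r_w}^2}\bigl(\Hc{r}{N}{L}L-\lambda N\bigr)$, whence $\Hc{r}{N}{\nabla_{\bar{L}}L}=\tfrac{1}{\abs{r_z}^2+\abs{r_w}^2}\bigl(\abs{\Hc{r}{N}{L}}^2-\lambda\,\Hc{r}{N}{N}\bigr)$, which vanishes at $p_0$ because $\Hc{r}{N}{L}(p_0)=0$ and $\lambda(p_0)=0$. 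Thus $N\Lambda(p_0)=0$. I expect the main obstacle to be the commutation identity itself, together with the observation that type $\ge 6$ --- and not merely type $4$ --- is exactly what makes the tangential-derivative term $L\,\Hc{r}{N}{L}$ vanish at $p_0$; the a priori dangerous normal--normal contribution $\Hc{r}{N}{N}$ is harmless because in $\nabla_{\bar{L}}L$ it is multiplied by the Levi form $\lambda$.
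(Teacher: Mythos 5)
Your proof has the same skeleton as the paper's: reduce $\nu$ to $N$ using that $\Lambda$ is real and all tangential first derivatives of $\lambda$ vanish at the minimum $p_0$; commute the normal derivative through the Hessian using the symmetry of the third-order derivatives $r_{z^mz^j\bar{z}^k}$ in the unbarred indices (your displayed identity is exactly the conjugate of the mechanism in \eqref{E:SwitchDerivates1}--\eqref{E:SwitchDerivates2}); and kill the connection terms at $p_0$ using $\Hc{r}{L}{L}(p_0)=\Hc{r}{L}{N}(p_0)=0$ together with the explicit form of $\nabla_{\bar{L}}L$. All of this is correct, including the frame reduction to $L=L_r$ (the computation in the proof of \Cref{L:pshnearboundaryiff}, with $L'=hL+rE$ and \eqref{E:pshnearboundaryiff1} supplied by \Cref{L:pshonboundaryiff}, shows $\nu\Hc{r}{L'}{L'}(p_0)=\abs{h(p_0)}^2\,\nu\Hc{r}{L}{L}(p_0)$). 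So, as in the paper, everything reduces to the single vanishing $L\Hc{r}{N}{L}(p_0)=0$ (the conjugate of the paper's $\bar{L}\Hc{r}{L}{N}(p_0)=0$) --- and this is the one place where your argument, as written, is wrong.

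The genuine gap is your second paragraph: the claim that $c_{p_0}\ge 6$ forces \emph{all} tangential derivatives of $\lambda$ up to order $3$ to vanish at $p_0$, hence fourth-order vanishing of $\lambda$. Characterization (2) of finite type controls only the derivatives of $\lambda$ along $L$ and $\bar{L}$, i.e.\ along $X$ and $Y$; it says nothing about the $T$-direction. First-order $T$-derivatives vanish because $p_0$ is a minimum, but $TT\lambda(p_0)$ may be strictly positive at a type-$6$ point: for $r=\RE w+\abs{z}^6+(\IM w)^2\abs{z}^2$, which is pseudoconvex and of type $6$ at $0$, one has, up to a positive factor, $\lambda\approx\frac{1}{4}\bigl(9\abs{z}^4+(\IM w)^2\bigr)$ along $b\Omega$, so $\lambda$ vanishes only to second order at $0$. (Also, $d_{b\Omega}\equiv 0$ on $b\Omega$; you presumably mean $\abs{q-p_0}$.) Consequently $\Hc{r}{N}{L}_{|_{b\Omega}}$ need not vanish to order $2$ at $p_0$ in all tangential directions, and your deduction of $(L\Hc{r}{N}{L})(p_0)=0$ collapses as stated. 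The step is repairable, because you only need the $X$- and $Y$-derivatives: let $\gamma$ be an integral curve of $X$ through $p_0$ (it stays in $b\Omega$ since $Xr\equiv 0$); iterating $\frac{d}{dt}(f\circ\gamma)=(Xf)\circ\gamma$ gives $(\lambda\circ\gamma)^{(j)}(0)=(X^j\lambda)(p_0)$ for $j\le 3$, and each $X^j\lambda$ expands into words of length $\le 3$ in $L,\bar{L}$, all vanishing at $p_0$ by $c_{p_0}\ge 6$; hence $\lambda\circ\gamma=\mathcal{O}(t^4)$, so $\abs{\Hc{r}{N}{L}\circ\gamma}^2\le C(\lambda\circ\gamma)=\mathcal{O}(t^4)$ by \Cref{L:pshonboundaryiff}, forcing $X\Hc{r}{N}{L}(p_0)=0$, and similarly $Y\Hc{r}{N}{L}(p_0)=0$, whence $L\Hc{r}{N}{L}(p_0)=0$. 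Note that the paper obtains this vanishing by a different device: it applies the second-order minimum principle (\Cref{L:XXf}) to $g=\lambda\,\Hc{r}{N}{N}-\abs{\Hc{r}{L}{N}}^2\ge 0$ and computes $(L\bar{L}g)(p_0)=-\abs{L\Hc{r}{L}{N}}^2(p_0)-\abs{\bar{L}\Hc{r}{L}{N}}^2(p_0)\ge 0$, the type hypothesis entering through $(L\bar{L}\lambda)(p_0)=0$; your patched route uses third-order words instead and is a legitimate variant, but without the repair the proposal contains a false statement and does not go through.
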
   

\begin{proof}
   Let $r$ be a smooth local defining function for $\Omega$ on some open neighborhood $U$ of $p_0$ such that $r$ is
   plurisubharmonic on $b\Omega\cap U$. After possibly shrinking $U$, we may assume that $N \coloneqq N_r$ is 
   defined on $U$, see \eqref{E:definitionN}. As usual, we write $L=\frac{1}{2}(X+iY)$ with $X,Y \in \Vr{}{U}$. 
   
   Consider the function $g \colon b\Omega \cap U \to \R$ given by
   $$
   g = \Hc{r}{L}{L}\Hc{r}{N}{N}-|\Hc{r}{L}{N}|^2 \quad\text{ on } b\Omega \cap U.
   $$
   Since $r$ is plurisubharmonic on $b\Omega \cap U$, it follows that $g \ge 0$, and since $\Omega$ 
   is weakly pseudoconvex at $p_0$, it follows that $g(p_0) = 0$. Thus, $(XXg)(p_0) \ge 0$ and $(YYg)(p_0) \ge 0$, see \Cref{L:XXf}. Since $L\bar{L}g = \frac{1}{4}(XXg+YYg) - \frac{i}{4}[X,Y]g$, and since the tangential derivative $[X,Y]g$ vanishes at $p_0$, it follows that $(L\bar{L}g)(p_0) \ge 0$. Moreover, the fact that $c_{p_0} > 4$ implies that for $\lambda \coloneqq \Hc{r}{L}{L}_{|_{b\Omega \cap U}}$ the functions $\lambda, L\lambda, \bar{L}\lambda, L\bar{L}\lambda$ all vanish at $p_0$. Hence, since $\Hc{r}{L}{N}(p_0) = 0$,
   $$
    (L\bar{L}g)(p_0) = -\left|L\Hc{r}{L}{N}\right|^2(p_0) -\left|\bar{L}\Hc{r}{L}{N}\right|^2(p_0).
   $$
   In particular, it follows that $\bar{L}\Hc{r}{L}{N}(p_0) = 0$.
   
   However, if we denote coordinates in $\C^2$ by $z = (z_1, z_2)$, and if we write $L = \sum_{j=1}^2L^j\frac{\partial}{\partial z_j}$, $N = \sum_{j=1}^2N^j\frac{\partial}{\partial z_j}$, then
   \begin{align}
     \label{E:SwitchDerivates1}
     \bar{L}\Hc{r}{L}{N} &=
     \sum_{j,k,\ell=1}^2\frac{\partial^3 r}{\partial\bar{z}_{\ell}\partial z_j\partial\bar{z}_k}
     \bar{L}^{\ell}L^j\bar{N}^k
     +\Hc{r}{\nabla_{\bar{L}}L}{N}+\Hc{r}{L}{\nabla_{L}N},\\
     \label{E:SwitchDerivates2}
     \bar{N}\Hc{r}{L}{L} &=
     \sum_{j,k,\ell=1}^2\frac{\partial^3 r}{\partial\bar{z}_{\ell}\partial z_j\partial\bar{z}_k}
     \bar{N}^{\ell}L^j\bar{L}^k
     +\Hc{r}{\nabla_{\bar{N}}L}{L}+\Hc{r}{L}{\nabla_{N}L}.     
   \end{align}
   Since $\Omega$ is weakly pseudoconvex at $p_0$, one has $\Hc{r}{L}{L}(p_0) = \Hc{r}{L}{N}(p_0) = 0$, i.e., $\Hc{r}{L}{\,\cdot\,}(p_0) \colon\Vhol{}{U} \to \R$ is identically zero. Moreover, since $0 \equiv L\bar{L}r = \Hc{r}{L}{L} + (\nabla_L\bar{L})r$, it follows that $((\nabla_L\bar{L})r)(p_0) = 0$, i.e., $(\nabla_L\bar{L})_{p_0} = cL_{p_0}$ for some constant $c \in \C$. Thus, the two rightmost terms in both \eqref{E:SwitchDerivates1} and \eqref{E:SwitchDerivates2} vanish at $p_0$, which proves that $\bar{L}\Hc{r}{L}{N}(p_0) = \bar{N}\Hc{r}{L}{L}(p_0)$.
   Since all tangential derivatives of $\Hc{r}{L}{L}$ vanish at $p_0$, it follows that $\nu\Hcomplex_r(L,L)(p_0) = 2\bar{N}\Hcomplex_r(L,L)(p_0) = 0$, which completes the proof. 
\end{proof}

This weaker result for boundary points of type greater than $4$ leads to a simplified proof of the Diederich--Forn{\ae}ss index and the Steinness index being $1$ for smoothly bounded, pseudoconvex domains which admit a smooth defining function that is plurisubharmonic on the boundary of the domain.

\begin{corollary}\label{C:DFforhighertype}
  Let $\Omega\Subset\mathbb{C}^2$ be a smoothly bounded, pseudoconvex domain. Suppose that 
  \begin{itemize}
    \item[(i)] $c_p\neq 4$ for all $p\in b\Omega$, and
    \item[(ii)]  $\Omega$ admits a smooth defining function which is plurisubharmonic on $b\Omega$.
  \end{itemize}
  Then for every $\eta\in(0,1)$ there exist a constant $K>0$ and an open neighborhood $U$ of $b\Omega$ such that
  $-(-r-Kr^2)^\eta$ is plurisubharmonic on $\Omega\cap U$.
  
  Similarly, for every $\mu>1$ there exist a constant $K>0$ and an open neighborhood $U$
  of $b\Omega$ such that $-(-r-Kr^2)^\mu$ is plurisubharmonic on $\Omega^c\cap U$.
\end{corollary}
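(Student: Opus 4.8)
The plan is to bypass any direct construction of plurisubharmonic exhaustions and instead verify the hypothesis highlighted in the remark following \Cref{L:basicestimate1}: if for every $\varepsilon>0$ there is a smooth defining function $r=r_\varepsilon$ for which the Range estimate \eqref{E:RangeEst} holds with $C=\varepsilon$ on a neighborhood of $b\Omega$, then both the Diederich--Forn{\ae}ss index and the Steinness index equal $1$ (see the proof of Corollary 1.6 in \cite{ForHer08}); unwinding the definitions, this is exactly the assertion that $-(-r-Kr^2)^\eta$ and $-(-r-Kr^2)^\mu$ are plurisubharmonic on $\Omega\cap U$ and $\Omega^c\cap U$ for all $\eta\in(0,1)$ and all $\mu>1$, respectively. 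Thus the whole problem reduces to producing, for each $\varepsilon>0$, a defining function and a neighborhood on which \eqref{E:RangeEst} holds with arbitrarily small constant. I would start from a defining function $r$ that is plurisubharmonic on $b\Omega$, which exists by hypothesis~(ii), and set $\lambda\coloneqq\Hc{r}{L}{L}_{|_{b\Omega\cap U}}$.

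To establish the small-constant estimate I would run the Taylor-expansion scheme from the proof of \Cref{L:pshnearboundaryiff}. Writing $\xi=aL+bN$ and expanding $\Hc{r}{L}{L}$ and $\Hc{r}{L}{N}$ in the normal direction via \eqref{E:Taylor}, the two structural inputs are the following. First, since $r$ is plurisubharmonic on $b\Omega$, \Cref{L:pshonboundaryiff} gives $|\Hc{r}{L}{N}|^2=\mathcal{O}(\lambda)$ on $b\Omega$, so the off-diagonal terms and the $|b|^2$-contributions are absorbed into $|\langle\partial r,\xi\rangle|\,|\xi|$ exactly as in \Cref{L:pshnearboundaryiff}. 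Second, hypothesis~(i) forces every weakly pseudoconvex boundary point to have $c_p\geq 6$, so \Cref{L:type6result} yields $\nu\Hc{r}{L}{L}=0$ at each such point. The set $\mathcal{W}$ of weakly pseudoconvex boundary points is compact, and $\nu\Hc{r}{L}{L}$ is continuous and vanishes on $\mathcal{W}$; hence it is uniformly small on a neighborhood of $\mathcal{W}$, while on the complement $\lambda$ is bounded below by strict pseudoconvexity. Consequently the only dangerous term $\delta_{b\Omega}\cdot(\nu\Hc{r}{L}{L})\circ\pi$ in the expansion of $|a|^2\Hc{r}{L}{L}$ is $o(d_{b\Omega})$ near $\mathcal{W}$ rather than merely $\mathcal{O}(d_{b\Omega})$. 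After shrinking $U$ so that $d_{b\Omega}^2\leq\varepsilon\,|r|$ there, every error term is dominated by $\varepsilon\bigl(|r|\,|\xi|^2+|\langle\partial r,\xi\rangle|\,|\xi|\bigr)$, which is \eqref{E:RangeEst} with $C=\varepsilon$.

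The Steinness statement is obtained in the same way: the expansion \eqref{E:Taylor} and all of the bounds above are two-sided, so the estimate holds on a full neighborhood of $b\Omega$, and the cited mechanism then yields the exterior family $-(-r-Kr^2)^\mu$, $\mu>1$, on $\Omega^c\cap U$ as well. The main obstacle is the second paragraph, namely extracting an \emph{arbitrarily small} constant out of the mere pointwise vanishing of $\nu\Hc{r}{L}{L}$ on $\mathcal{W}$ provided by \Cref{L:type6result}, which is strictly weaker than the quantitative condition \eqref{E:pshnearboundaryiff2} available at type-$4$ points. It is precisely here that compactness of $\mathcal{W}$ together with the uniform smallness of $\nu\Hc{r}{L}{L}$ near $\mathcal{W}$ does the work, and it is exactly this step that breaks down in the presence of type-$4$ boundary points, consistent with the counterexamples of \Cref{sec:counterexample}.
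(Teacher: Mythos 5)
Your overall strategy---verifying the hypothesis of the remark after \Cref{L:basicestimate1}, i.e.\ producing for each $\varepsilon>0$ a defining function satisfying \eqref{E:RangeEst} with $C=\varepsilon$, using \Cref{L:type6result} plus compactness of the weakly pseudoconvex set $\mathcal{W}$ to make the normal derivative $\nu\Hc{r}{L}{L}$ uniformly small near $\mathcal{W}$---is in the right spirit, and your handling of the tangential term $\delta_{b\Omega}\cdot(\nu\Hc{r}{L}{L})\circ\pi$ is sound (it is essentially an explicit version of what the paper delegates: the paper only derives the fiberwise bounds $\Hc{r}{L}{L}=\mathcal{O}(r^2)$, $\Hc{r}{L}{N}=\mathcal{O}(r)$, $\Hc{r}{N}{N}=\mathcal{O}(1)$ at points $q$ with $c_{\pi(q)}\ge 6$ and then cites the arguments following (3.7) in \cite{ForHer07} for the gluing and the passage to $-(-r-Kr^2)^\eta$). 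However, there is a genuine gap in your second paragraph: you claim that after shrinking $U$ \emph{every} error term is dominated by $\varepsilon\bigl(|r|\,|\xi|^2+|\langle\partial r,\xi\rangle|\,|\xi|\bigr)$ for the \emph{fixed} defining function $r$ from hypothesis (ii). This is false. Writing $\xi=aL+bN$, the contribution $|b|^2\Hc{r}{N}{N}$ and the $|b|^2$-terms produced when you absorb the cross term $2\re\bigl(a\bar{b}\,\Hc{r}{L}{N}\bigr)$ via Cauchy--Schwarz (exactly as in \Cref{L:pshnearboundaryiff}, where the constant $C_2$ is \emph{fixed}) are of size $\mathcal{O}(1)\,|\langle\partial r,\xi\rangle|^2$ with a constant that does not shrink with $U$. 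Taking $\xi=N$ at a point where $\Hc{r}{N}{N}\le -c_0<0$ (which no hypothesis rules out, even for strictly pseudoconvex $\Omega$) shows that \eqref{E:RangeEst} with $C=\varepsilon$ cannot hold for the unmodified $r$ once $\varepsilon$ is small, no matter how much one shrinks the neighborhood. This is precisely why the remark you invoke allows the defining function $r=r_\varepsilon$ to depend on $\varepsilon$, and why the $Kr^2$-correction appears explicitly in the statement of the corollary.

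The gap is repairable, and the repair is the step your writeup omits: replace $r$ by $r_K\coloneqq r+Kr^2$, whose complex Hessian is $(1+2Kr)\Hcsymbol{r}+2K\,\partial r\otimes\bar\partial r$; choosing $K$ large but \emph{fixed}, the positive term $2K|\langle\partial r,\xi\rangle|^2$ swallows all the fixed-constant $|\langle\partial r,\xi\rangle|^2$ losses from the normal-normal and cross terms, leaving only the tangential loss, which by your compactness argument is bounded below by $-\varepsilon' d_{b\Omega}|a|^2 \ge -C\varepsilon'\,|r_K|\,|\xi|^2$ on a neighborhood $U_{\varepsilon'}$ depending on $\varepsilon'$. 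With this insertion your verification of the small-constant estimate goes through (including the two-sided version for the Steinness statement), and the argument then genuinely differs from the paper's only in that you carry out the bookkeeping yourself rather than citing \cite{ForHer07}. One further small caveat: concluding that the Diederich--Forn{\ae}ss and Steinness indices equal $1$ is a priori weaker than the corollary's assertion about the specific functions $-(-r-Kr^2)^\eta$ and $-(-r-Kr^2)^\mu$; you should note that the mechanism cited from the proof of Corollary 1.6 in \cite{ForHer08} produces exhaustions exactly of this form from the estimate \eqref{E:RangeEst} with small constant, so the specific conclusion is recovered.
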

We note that (ii) itself leads to the Diederich--Forn{\ae}ss index being 1, see \cite{ForHer07}. However,  the additional condition (i) simplifies the construction in \cite[Section 3]{ForHer07} considerably.

\begin{proof}
  Let $r$ be a smooth defining function of $\Omega$ which is plurisubharmonic on $b\Omega$, and assume
  that $L \coloneqq L_r$ and $N \coloneqq N_r$ are defined on some open neighborhood $U$ of $b\Omega$. 
  After possibly shrinking $U$, we may use \eqref{E:Taylor} for $f=\Hc{r}{L}{L}$ and $q\in U$ with $c_{\pi(q)}\geq 6$. It then follows from Lemma \ref{L:type6result} that
  \begin{align*}
   \Hc{r}{L}{L}(q)=\mathcal{O}(r^2)(q).
  \end{align*}
  Similarly, one obtains $ \Hc{r}{L}{N}(q)=\mathcal{O}(r)(q)$ and $ \Hc{r}{N}{N}(q)=\mathcal{O}(1)(q)$. It then follows 
  \begin{align*}
    \Hc{r}{\xi}{\xi}(q)=\left(\mathcal{O}(r^2)|\xi|^2+\mathcal{O}(|\langle\partial r,\xi\rangle|^2)\right)(q)\;\;\sjump\forall\; \xi\in\mathbb{C}^n\;\;\forall\;q\in U \text{ with } c_{\pi(q)}\ge 6.
  \end{align*}
  The arguments following (3.7) in \cite{ForHer07} then prove the claim.
  \end{proof}

\section{On a special class of pseudoconvex domains} \label{S:sesquiconvex}
In this section, we derive a sufficient condition for the existence of local defining functions, which are plurisubharmonic on the boundary, in terms of real coordinates. While this condition, in contrast to the criterion given in \Cref{L:pshonboundaryiff}, 
 is not an equivalent characterization, it has the advantage of being independent of the choice of defining function, and thus is more easily checkable.

Let $\Omega\subset\mathbb{C}^2$ be a smoothly bounded domain, and let $r \colon U \to \R$ be a smooth local 
defining function for $\Omega$ near some point $p_0 \in b\Omega$. After possibly shrinking $U$, let $L$ be a nonvanishing holomorphic tangential vector field on $U$, and let $N=N_r$ be defined as in \eqref{E:definitionN}. Write
\begin{align*}
  L=\textstyle\frac{1}{2}(X+iY),\;\;N=\frac{1}{2}(\nu + iT)
\end{align*}
with $X,Y,T,\nu \in \Vr{}{U}$. The matrix associated with the real Hessian form $\Qrsymbol{r} \colon \Vr{}{U} \times \Vr{}{U} \to \R$ relative to the  basis $(X,Y,T,\nu)$ will be denoted by $\Qrsymbolcal{r}$, i.e.,
\begin{align*}
\Qrsymbolcal{r}\coloneqq
   \begin{pmatrix}
  \Hr{r}{X}{X} & \Hr{r}{X}{Y} &  \Hr{r}{X}{T} & \Hr{r}{X}{\nu}\\
  \Hr{r}{Y}{X} & \Hr{r}{Y}{Y} &  \Hr{r}{Y}{T} & \Hr{r}{Y}{\nu}\\
  \Hr{r}{T}{X} & \Hr{r}{T}{Y} &  \Hr{r}{T}{T} & \Hr{r}{T}{\nu}\\
   \Hr{r}{\nu}{X} & \Hr{r}{\nu}{Y} &  \Hr{r}{\nu}{T} & \Hr{r}{\nu}{\nu}
 \end{pmatrix}
 .
\end{align*}
We readily recognize that various convexity-like boundary conditions for $\Omega$ near $p_0$ may be expressed through conditions on entries of the leading principal $3\times 3$ submatrix of $\Qrsymbolcal{r}$ for $p\in b\Omega$ near $p_0$.
\begin{itemize}
  \item[(i)] $\Omega$ is convex near $p_0$ if the leading principal $3\times3$ submatrix of $\Qrsymbolcal{r}(p)$ is positive semi-definite for all $p\in b\Omega$ near $p_0$.
  \item[(ii)] $\Omega$ is $\mathbb{C}$-convex near $p_0$ if the leading principal $2\times 2$ submatrix of $\Qrsymbolcal{r}(p)$ is positive semi-definite for all $p\in b\Omega$ near $p_0$.
    \item[(iii)] $\Omega$ is pseudoconvex near $p_0$ if the trace of the leading principal $2\times 2$ submatrix of $\Qrsymbolcal{r}(p)$ is non-negative for all $p\in b\Omega$ near $p_0$.
\end{itemize}

In order to see how to express plurisubharmonicity on the boundary of a smooth local defining function in real coordinates, we need to formulate condition \eqref{E:pshonboundary} in real coordinates. Thus, we compute 
\begin{equation} \label{E:HrcLNreal} \begin{split}
  4\Hc{r}{L}{N}
  &= 4L(\bar{N}r)-4\left(\nabla_{L}\bar{N}\right)r\\
  &= (X+iY)(\nu-iT)r-\left(\nabla_{X+iY}(\nu-iT)\right)r\\
  &= X\nu r-(\nabla_{X}\nu)r+YT r-(\nabla_YT)r\\
  &\quad+i\bigl(Y\nu r-(\nabla_Y \nu)r-XTr+(\nabla_XT)r \bigr)\\
  &= \Hr{r}{X}{\nu}+\Hr{r}{Y}{T} + i\bigl(\Hr{r}{Y}{\nu} - \Hr{r}{X}{T}\bigr).
\end{split} \end{equation}
\Cref{L:pshonboundaryiff} may now be reformulated in terms of entries of $\Qrsymbolcal{r}$ as follows.

\begin{lemma}\label{L:pshinrealcoordinates}
  Let $\Omega\subset\mathbb{C}^2$ be a smoothly bounded, pseudoconvex domain, $p_0\in b\Omega$. Then 
  $\Omega$ admits a smooth local defining function which is plurisubharmonic on $b\Omega$ near $p_0$ 
  if and only if there exists a smooth local defining function $\rho \colon U \to \R$ for $\Omega$ 
  on some open neighborhood $U$ of $p_0$ such that
    \begin{align}\label{E:pshinrealcoordinates}
      \bigl(\Hr{\rho}{X}{\nu}+\Hr{\rho}{Y}{T} \bigr)^2+\bigl(\Hr{\rho}{Y}{\nu} - \Hr{\rho}{X}{T} \bigr)^2=\mathcal{O}\left(\Hc{\rho}{L}{L}\right) 
    \end{align}
    on $b\Omega\cap U$.
\end{lemma}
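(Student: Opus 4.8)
The plan is to obtain \Cref{L:pshinrealcoordinates} as a direct translation of \Cref{L:pshonboundaryiff} into real coordinates, the dictionary being precisely the computation carried out in \eqref{E:HrcLNreal}. By \Cref{L:pshonboundaryiff}, the existence of a smooth local defining function that is plurisubharmonic on $b\Omega$ near $p_0$ is equivalent to the existence of a smooth local defining function $\rho \colon U \to \R$ satisfying $\bigl|\Hc{\rho}{L}{N}\bigr|^2=\mathcal{O}\bigl(\Hc{\rho}{L}{L}\bigr)$ on $b\Omega\cap U$. It therefore suffices to show that this condition is the same as \eqref{E:pshinrealcoordinates}.

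The key observation is that the chain of equalities in \eqref{E:HrcLNreal} uses only the product formulas \eqref{equ:XYf} and \eqref{equ:LNbarf} together with the decompositions $L=\tfrac12(X+iY)$ and $N=\tfrac12(\nu+iT)$; it never uses that the differentiated function is the specific defining function $r$. Hence the same computation applies verbatim with $r$ replaced by an arbitrary smooth local defining function $\rho$, giving
\[
4\Hc{\rho}{L}{N}=\bigl(\Hr{\rho}{X}{\nu}+\Hr{\rho}{Y}{T}\bigr)+i\bigl(\Hr{\rho}{Y}{\nu}-\Hr{\rho}{X}{T}\bigr)\quad\text{on }U.
\]
Taking squared moduli yields, on $b\Omega\cap U$,
\[
16\,\bigl|\Hc{\rho}{L}{N}\bigr|^2=\bigl(\Hr{\rho}{X}{\nu}+\Hr{\rho}{Y}{T}\bigr)^2+\bigl(\Hr{\rho}{Y}{\nu}-\Hr{\rho}{X}{T}\bigr)^2.
\]
Since the numerical factor $16$ is irrelevant for the Landau relation, the right-hand side is $\mathcal{O}\bigl(\Hc{\rho}{L}{L}\bigr)$ if and only if $\bigl|\Hc{\rho}{L}{N}\bigr|^2=\mathcal{O}\bigl(\Hc{\rho}{L}{L}\bigr)$, so \eqref{E:pshinrealcoordinates} for $\rho$ is equivalent to \eqref{E:pshonboundary} for $\rho$, and the claim follows from \Cref{L:pshonboundaryiff}.

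The only point requiring care—and the closest thing to an obstacle—is the role of the transversal field $N$: the real frame $(X,Y,T,\nu)$ in \eqref{E:pshinrealcoordinates} is the one fixed at the start of the section via $N=N_r$, whereas the frame implicit in \Cref{L:pshonboundaryiff} is the canonical frame attached to $\rho$. This is harmless because, by \eqref{equ:independenceonboundary}, one has $N_\rho=N_r$ on $b\Omega\cap U$, so the two frames coincide along the boundary, where the entire argument takes place. Moreover, by \Cref{R:Landau} the Landau class on the right of \eqref{E:pshonboundary} is intrinsic, and the condition itself is insensitive to replacing $N$ by any other transversal field differing from it by a tangential multiple (and to rescaling): from $\Hc{\rho}{L}{N+gL}=\Hc{\rho}{L}{N}+\bar{g}\,\Hc{\rho}{L}{L}$ together with $\Hc{\rho}{L}{L}=\mathcal{O}\bigl(\sqrt{\Hc{\rho}{L}{L}}\,\bigr)$ on $b\Omega\cap U$ (valid since $\Hc{\rho}{L}{L}\ge 0$ there by pseudoconvexity and is bounded), one sees that $\bigl|\Hc{\rho}{L}{N}\bigr|^2=\mathcal{O}\bigl(\Hc{\rho}{L}{L}\bigr)$ is unaffected. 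I expect no genuine difficulty beyond this bookkeeping, as the statement is a faithful rewriting of \Cref{L:pshonboundaryiff} in the real frame.
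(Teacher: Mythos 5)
Your proof is correct and takes essentially the same route as the paper: the paper presents \Cref{L:pshinrealcoordinates} as an immediate reformulation of \Cref{L:pshonboundaryiff} via the identity \eqref{E:HrcLNreal}, which is exactly your argument of taking squared moduli in $4\Hc{\rho}{L}{N}=\bigl(\Hr{\rho}{X}{\nu}+\Hr{\rho}{Y}{T}\bigr)+i\bigl(\Hr{\rho}{Y}{\nu}-\Hr{\rho}{X}{T}\bigr)$. Your additional bookkeeping---that the frame is intrinsic on $b\Omega\cap U$ by \eqref{equ:independenceonboundary} and that the condition is insensitive to tangential perturbations of $N$---only makes explicit what the paper leaves implicit.
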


  In the proof of \Cref{T:type4pshonboundary} it is shown that, given any smooth local defining function 
  $r \colon U \to \R$ for $\Omega$, then $\rho \coloneqq re^h$ satisfies \eqref{E:pshinrealcoordinates} 
  if $h \in \mathcal{C}^\infty(U,\R)$ solves the equation 
  $$
  Lh = -\frac{2}{\abs{dr}}\Hc{r}{L}{N} + \mathcal{O}(\sqrt{\lambda}) \quad\text{on } b\Omega \cap U, 
  $$
  with $\lambda \coloneqq \Hc{r}{L}{L}_{|_{b\Omega \cap U}}$. 
  This can be reformulated in real coordinates as follows,
  \begin{align}
      Xh&=\frac{-1}{|dr|}\bigl(\Hr{r}{X}{\nu}+\Hr{r}{Y}{T}\bigr) +\mathcal{O}(\sqrt{\lambda}) \quad\text{on } b\Omega \cap U,\label{E:pshcondonhx}\\
      Yh&=\frac{1}{|dr|}\bigl(\Hr{r}{X}{T} - \Hr{r}{Y}{\nu}\bigr) +\mathcal{O}(\sqrt{\lambda}) \quad\text{on } b\Omega \cap U.\label{E:pshcondonhy}
    \end{align}

\begin{example}\label{E:examplecont}
  Let us revisit Example \ref{E:example}. There, in $\C^2$ with coordinates $z=x+iy$ and $w=u+iv$, 
  we consider $\Omega=\{(z,w)\in U: r(z,w)<0\}$ with
  \begin{align*}
    U=\{(z,w)\in\mathbb{C}^2:|x|<\pi/2\}, \quad r(z,w)=u-\textstyle\frac{1}{2}(x-v)^2-\ln(\cos(x)).
  \end{align*}
  We already computed that, with $L=r_w\frac{\partial}{\partial z}-r_z\frac{\partial}{\partial w}$,
  \begin{align*}
  \Hc{r}{L}{L}(z,w)=\textstyle\frac{1}{16}(x-v)^2\sec^2(x).
  \end{align*}
  In particular, note that the function $P(z,w) \coloneqq x-v$ is of class $\mathcal{O}(\sqrt{\lambda})$ on 
  $b\Omega$. Considering real vector fields on $\C^2$ as maps to $\R^4$, we can then compute further that
  \begin{align*}
  X &= \frac{1}{\abs{dr}}(r_u,-r_v,-r_x,r_y) 
    = \frac{1}{\abs{dr}}\left(1,0,-\tan(x),0\right) + \mathcal{O}(\sqrt{\lambda}), \\
  Y &= \frac{1}{\abs{dr}}(-r_v,-r_u,r_y,r_x) 
    = \frac{1}{\abs{dr}}\left(0,-1,0,\tan(x)\right) + \mathcal{O}(\sqrt{\lambda}), \\
  T &= \frac{1}{\abs{dr}}(r_y,-r_x,r_v,-r_u) 
    = \frac{1}{\abs{dr}}\left(0,-\tan(x),0,-1\right) + \mathcal{O}(\sqrt{\lambda}), \\
  \nu &= \frac{1}{\abs{dr}}(r_x,r_y,r_u,r_v) 
    = \frac{1}{\abs{dr}}\left(\tan(x),0,1,0\right) + \mathcal{O}(\sqrt{\lambda}), 
  \end{align*}
  where, in slight deviation from previous notation, the terms $\mathcal{O}(\sqrt{\lambda})$ denote vector fields 
  with coefficients that are of class $\mathcal{O}(\sqrt{\lambda})$ on $b\Omega$.
  From this, it follows readily that on $b\Omega$
  \begin{align*}
  \Hr{r}{X}{\nu} &= \frac{1}{\abs{dr}^2}\tan^3(x) + \mathcal{O}(\sqrt{\lambda}), &  \Hr{r}{Y}{T} &= \frac{1}{\abs{dr}^2}\tan(x) + \mathcal{O}(\sqrt{\lambda}), \\
  \Hr{r}{X}{T} &= -\frac{1}{\abs{dr}^2} + \mathcal{O}(\sqrt{\lambda}), &  \Hr{r}{Y}{\nu} &= \frac{1}{\abs{dr}^2}\tan^2(x) + \mathcal{O}(\sqrt{\lambda}).
  \end{align*}
  Since $|dr|^2 = 1+\tan^2(x) + \mathcal{O}(\lambda)$ on $b\Omega$,
  it follows further that \eqref{E:pshcondonhx} and \eqref{E:pshcondonhy} are given by
  \begin{align*}
  h_x -\tan(x)h_u &= -\tan(x) + \mathcal{O}(\sqrt{\lambda}) \quad\text{on } b\Omega, \ \\
  -h_y + \tan(x)h_v &= -1 + \mathcal{O}(\sqrt{\lambda}) \quad\text{on } b\Omega.
  \end{align*}
  It is easy to see that, e.g., $h(z,w) = y+\ln(\cos(x))$ and $h(z,w) =y+u$ both satisfy these last two equations, so that $re^h$ satisfies \eqref{E:pshinrealcoordinates}.  Hence, if $\rho=re^h$, then for every $V \Subset U$ there exists a smooth defining function for $\Omega$ on $U$ that is plurisubharmonic on $b\Omega \cap V$, see \Cref{R:pshonboundaryiff}. In view of \Cref{T:pshnearboundary}, this means that $\Omega$ admits plurisubharmonic smooth local defining functions near each boundary point. 
\end{example}

\begin{definition}
Let $\Omega \subset \C^2$ be a smoothly bounded domain. We say that $\Omega$ is sesquiconvex at 
$p_0 \in b\Omega$ if $\Omega$ is pseudoconvex at $p_0$ and if there exists a smooth local defining 
function $\rho \colon U \to \R$ for $\Omega$ near $p_0$ such that
\begin{align} \label{E:distancepshC}
   \Hr{\rho}{Y}{T}^2+\Hr{\rho}{X}{T}^2=\mathcal{O}\left(\Hc{\rho}{L}{L}\right) \quad \text{on } b\Omega \cap U.
\end{align}
\end{definition}
\begin{remark}
(1) Let $\rho \colon U \to \R$ be a smooth local defining function for $\Omega$ near $p_0$, 
and let $h \colon U \to \R$ be smooth. Then for any two tangential vector fields $V,W$ near $p_0$ 
one has $\Hr{\rho e^h}{V}{W} = - (\nabla_VW)(\rho e^h) = -e^h(\nabla_VW)\rho = e^h\Hr{\rho}{V}{W}$ on $b\Omega \cap U$. 
In particular, this shows that condition \eqref{E:distancepshC} is independent of the choice of a local defining function $\rho$.

(2) Since $\Hc{\rho}{L}{L} = \Hr{\rho}{X}{X} + \Hr{\rho}{Y}{Y}$, one easily sees that every domain $\Omega \subset \C^2$ that is convex at $p_0 \in b\Omega$ is sesquiconvex at $p_0$. Moreover, it is clear from the definition that if $\Omega$ is strictly pseudoconvex at $p_0 \in b\Omega$, then $\Omega$ is sesquiconvex at $p_0$. On the other hand, the domain $\Omega$ considered in \Cref{E:example} and \Cref{E:examplecont} is 
not sesquiconvex at $0$.
\end{remark}

In the following, we show that sesquiconvexity at a boundary point $p_0 \in b\Omega$ implies the existence of local defining functions which are plurisubharmonic on a one-sided neighborhood $\bar{\Omega} \cap U$ of $p_0$, i.e., $\Hcsymbolcal{\rho}(q) \ge 0$ for every $q \in \bar{\Omega} \cap U$.

\begin{proposition}\label{P:sesquiconvexpsh}
If $\Omega \subset \C^2$ is sesquiconvex at $p_0 \in b\Omega$, then $\Omega$ admits a smooth local defining function $\rho:U\longrightarrow\mathbb{R}$ near $p_0$ which is plurisubharmonic
on $\bar{\Omega}\cap U$.
\end{proposition}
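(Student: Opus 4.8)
The plan is to produce the desired function in the form $\rho=\chi\circ\delta$, where $\delta\coloneqq\delta_{b\Omega}$ is the signed distance function (smooth on $U$ once $U$ is taken inside a tubular neighborhood of $b\Omega$) and $\chi\in\mathcal C^\infty(\mathbb R,\mathbb R)$ is a convex increasing modifier with $\chi(0)=0$, $\chi'>0$. Since sesquiconvexity and condition \eqref{E:distancepshC} are independent of the chosen defining function, I may work with $\delta$ from the start. The decisive feature of $\delta$ is that its real Hessian annihilates the normal direction: from $\lvert\nabla\delta\rvert\equiv1$ one gets $\nabla^2\delta\cdot\nu=\tfrac12\nabla\lvert\nabla\delta\rvert^2=0$, hence $\Hr{\delta}{V}{\nu}=0$ for every $V$, everywhere on the tube. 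By \eqref{E:HrcLNreal} this collapses the complex Hessian of $\delta$ to
\[
4\,\Hc{\delta}{L}{N}=\Hr{\delta}{Y}{T}-i\,\Hr{\delta}{X}{T},\qquad \Hc{\delta}{N}{N}=\Hr{\delta}{T}{T},\qquad \Hc{\delta}{L}{L}=\Hr{\delta}{X}{X}+\Hr{\delta}{Y}{Y}\eqqcolon\lambda .
\]
Moreover $\langle\partial\delta,L\rangle=L\delta\equiv0$ for $L=L_\delta$, so composing with $\chi$ alters \emph{only} the $(N,N)$-entry: on $U$ one has $\Hcsymbolcal{\chi\circ\delta}=\chi'(\delta)\,\Hcsymbolcal{\delta}+\chi''(\delta)\lvert\langle\partial\delta,N\rangle\rvert^2E$, with $E$ the matrix having a single $1$ in the lower right corner.

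First I would establish a one-sided lower bound on the Levi entry. Parametrize the tube by $q=\pi(q)+\delta(q)\nu$; the normal lines are straight, $\nu$ is constant along each of them, and therefore the complex tangent plane $H_q$ is the same real $2$-plane $H$ at every point of a given normal line. Writing $A(s)$ for $\nabla^2\delta$ restricted to the (fixed) tangent space of the level set at parameter $s=\delta(q)$, the eikonal identity gives the Riccati equation $\tfrac{d}{ds}A(s)=-A(s)^2$, so $A$ is nonincreasing in $s$; at interior points, where $s=\delta(q)\le0$, this yields $A(\delta(q))\ge A(0)$ in the Loewner order. Taking the trace over the fixed plane $H$ and using pseudoconvexity at $p\coloneqq\pi(q)$,
\[
\Hc{\delta}{L}{L}(q)=\TRACE\!\big(A(\delta(q))\big|_{H}\big)\ge\TRACE\!\big(A(0)\big|_{H}\big)=\lambda(p)\ge0 .
\]
Thus $\Hc{\delta}{L}{L}\ge0$ on all of $\bar\Omega\cap U$. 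Expanding $A(s)=A(0)-sA(0)^2+\mathcal O(s^2)$ gives the sharper bound $\lambda(q)\ge\lambda(p)+c\,\lvert\delta(q)\rvert\big(\lvert A(0)X\rvert^2+\lvert A(0)Y\rvert^2\big)$ for $\lvert\delta\rvert$ small, which I will use below.

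The heart of the proof is the off-diagonal estimate $\lvert\Hc{\delta}{L}{N}\rvert^2=\mathcal O(\lambda)$ on $\bar\Omega\cap U$ — not merely on $b\Omega$. On the boundary this is exactly sesquiconvexity, $\Hr{\delta}{X}{T}^2+\Hr{\delta}{Y}{T}^2=\mathcal O(\lambda)$. For an interior $q$ I would expand $\langle A(\delta(q))V,T\rangle=\langle A(0)V,T\rangle-\delta(q)\langle A(0)V,A(0)T\rangle+\mathcal O(\delta^2)$ for $V\in\{X,Y\}$; squaring and summing, the leading terms reproduce $\Hr{\delta}{X}{T}(p)^2+\Hr{\delta}{Y}{T}(p)^2=\mathcal O(\lambda(p))\le\mathcal O(\lambda(q))$, while the remainder is of size $\mathcal O(\delta^2)\big(\lvert A(0)X\rvert^2+\lvert A(0)Y\rvert^2\big)$, which by the sharpened Levi bound is $\mathcal O(\lvert\delta\rvert)\,\lambda(q)$. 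Hence $\lvert\Hc{\delta}{L}{N}(q)\rvert^2=\mathcal O(\lambda(q))$ uniformly on $\bar\Omega\cap U$. This is the step I expect to be the main obstacle: the naive boundary estimate only yields $\mathcal O(\lambda(p))+\mathcal O(\delta^2)$, and it is precisely the monotone growth of the Levi form into the domain (the Riccati lower bound) that absorbs the quadratic normal error and keeps the off-diagonal term subordinate to $\lambda$ throughout the one-sided neighborhood.

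Finally I would close the argument by convexifying. Fix $V\Subset U$ and choose $\chi$ with $\chi''/\chi'\ge K$ for $K$ large, e.g. $\chi(t)=K^{-1}(e^{Kt}-1)$. On $\bar\Omega\cap V$ the matrix $\Hcsymbolcal{\chi\circ\delta}$ has $(L,L)$-entry $\chi'(\delta)\lambda\ge0$ and
\[
\det\Hcsymbolcal{\chi\circ\delta}=\chi'(\delta)^2\big(\lambda\,\Hc{\delta}{N}{N}-\lvert\Hc{\delta}{L}{N}\rvert^2\big)+\chi'(\delta)\chi''(\delta)\,\lvert\langle\partial\delta,N\rangle\rvert^2\,\lambda .
\]
By the off-diagonal estimate the bracket is $\ge-\mathcal O(1)\,\lambda$, and $\lvert\langle\partial\delta,N\rangle\rvert^2\ge c>0$, so for $K$ large this determinant is $\ge0$ (and equals $0$ only where $\lambda=0$, where $\Hc{\delta}{L}{N}=0$ as well). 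Since the trace is positive for $K$ large, a symmetric $2\times2$ Hermitian matrix with nonnegative trace and determinant is positive semi-definite; hence $\Hcsymbolcal{\chi\circ\delta}(q)\ge0$ for every $q\in\bar\Omega\cap V$. As $\chi\circ\delta$ is a smooth local defining function for $\Omega$, this proves the claim.
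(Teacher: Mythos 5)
Your proposal is correct, but it takes a genuinely different route from the paper's. The paper normalizes only to first order, setting $\rho \coloneqq r/|dr|$, and works entirely with boundary data: from $0=L(|d\rho|^2)=\Hr{\rho}{X}{\nu}+i\Hr{\rho}{Y}{\nu}$ on $b\Omega$ and sesquiconvexity it gets $|\Hc{\rho}{L}{N}|=\mathcal{O}(\sqrt{\lambda})$ on $b\Omega$, then performs a third-order computation with the Chern connection to show $\nu\Hc{\rho}{L}{L}=-|\Qc{\rho}{L}{\,\cdot\,}|^2+\mathcal{O}(\sqrt{\lambda})\le\mathcal{O}(\sqrt{\lambda})$, and finally feeds these two one-sided boundary conditions into the machinery of Lemma \ref{L:basicestimate1} and Proposition \ref{L:pshnearboundaryiff} (Taylor expansion off the boundary plus the quadratic correction $\rho+\rho^2\psi$). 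You instead normalize to all orders along normals by working with the signed distance $\delta$, so that $\Hr{\delta}{\cdot}{\nu}\equiv 0$ throughout the tube, and you replace the boundary computation by the eikonal/Riccati identity $A'=-A^2$ along normal lines: the Loewner monotonicity $A(\delta)\ge A(0)$ on the inside yields $\Hc{\delta}{L}{L}\ge 0$ and the off-diagonal bound $|\Hc{\delta}{L}{N}|^2=\mathcal{O}(\Hc{\delta}{L}{L})$ directly on all of $\bar\Omega\cap U$, after which the convex reparametrization $\chi\circ\delta$ closes the argument without any of the Section 5 criteria. Your crucial absorption step is sound, and it is in fact the distance-function avatar of the paper's mechanism: in both proofs the normal derivative of the Levi form is minus a square ($-\operatorname{tr}_H A_0^2$ for you, $-|\Qc{\rho}{L}{\,\cdot\,}|^2$ in the paper), and that sign is precisely what makes the one-sided conclusion possible. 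The trade-off: the paper's route is longer but its intermediate criteria are stated for arbitrary defining functions and are reused elsewhere (e.g.\ in \Cref{T:pshnearboundary} and \Cref{C:DFforhighertype}), while yours is more geometric and self-contained, at the cost of importing standard facts the paper does not develop (smoothness of $\delta$ in a tubular neighborhood, the Riccati equation); you should also write out the one-line eigenvalue argument that the exact solution $A(s)=A_0(I+sA_0)^{-1}$ satisfies $A(s)\ge A_0+c|s|A_0^2$ for $|s|$ small, since the bare expansion $A(s)=A_0-sA_0^2+\mathcal{O}(s^2)$ does not by itself rule out the $\mathcal{O}(s^2)$ term eating the gain, and your Hessian identities drop harmless factors of $\tfrac14$ (as does the paper's remark following the definition of sesquiconvexity). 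Finally, note that your appeals to pseudoconvexity and to \eqref{E:distancepshC} at all boundary points near $p_0$ are legitimate, because the paper's definitions of pseudoconvexity at $p_0$ and of sesquiconvexity are formulated on a boundary neighborhood.
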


\begin{proof}
Let $r \colon U \to \R$ be a smooth local defining function for $\Omega$ near $p_0$, and assume that $dr \neq 0$ on $U$. We will show that $\rho \coloneqq r/|dr|$ satisfies
  \begin{align}
    \left|\Hc{\rho}{L}{N}\right|&=\mathcal{O}(\sqrt{\lambda})\;\;\text{on }b\Omega\cap U,\text{ and}\label{E:pshnearboundaryiff1b}\\
  \nu\Hc{\rho}{L}{L}&\leq\mathcal{O}(\sqrt{\lambda})\;\;\text{on } b\Omega\cap U.\label{E:pshnearboundaryiff2b}
  \end{align}
The claim then follows from a brief analysis of the proofs of \Cref{L:basicestimate1} and \Cref{L:pshnearboundaryiff}.

Let $L$ be a nonvanishing holomorphic tangential vector field on $U$, and let $X,Y \in \Vr{}{U}$ such that $L = \frac{1}{2}(X+iY)$. A straightforward computation shows that
\begin{align}\label{E:class1condition}
  0=L(|d\rho|^2)=\Hr{\rho}{X}{\nu}+i\Hr{\rho}{Y}{\nu}  \quad\text{on } b\Omega \cap U.
\end{align}
Since $\Omega$ is sesquiconvex at $p_0$, it thus follows from \eqref{E:distancepshC} and the computations in \eqref{E:HrcLNreal} that \eqref{E:pshnearboundaryiff1b} is true. The equation in \eqref{E:class1condition} may be expressed in complex notation, with $N= 2\rho_{\bar{z}^1}\frac{\partial}{\partial z^1}+2 \rho_{\bar{z}^2}\frac{\partial}{\partial z^2}$,
as
$$
0 = L(|\partial\rho|^2) =  \Qc{\rho}{L}{N}+\Hc{\rho}{L}{N} \;\;\text{on } b\Omega \cap U.
$$
Since \eqref{E:pshnearboundaryiff1b} holds, it then follows that
\begin{align}\label{E:pshnearboundaryiff1c}
   \left|\Qc{\rho}{L}{N}\right|=\mathcal{O}(\sqrt{\lambda})\;\;\text{on }b\Omega\cap U.
\end{align}
Moreover, we compute on $b\Omega\cap U$
\begin{align*}
  0&=\bar{L}L\left(|\partial\rho|^2\right)
=\bar{L}\left( \Qc{\rho}{L}{N}\right)+\bar{L}\left(\Hc{\rho}{L}{N}\right)\\
&=\sum_{j,k,\ell=1}^2\frac{\partial^{3}\rho}{\partial \bar{z}^\ell \partial z^j\partial z^k}\bar{L}^\ell L^j N^k
+\Qc{\rho}{\nabla_{\bar{L}}L}{N}+\Qc{\rho}{L}{\nabla_{\bar{L}}N}\\
&+\sum_{j,k,\ell=1}^2\frac{\partial^{3}\rho}{\partial \bar{z}^\ell \partial z^j \partial \bar{z}^k}\bar{L}^\ell L^j \bar{N}^k
+\Hc{\rho}{\nabla_{\bar{L}}L}{N}+\Hc{\rho}{L}{\nabla_{L}N}.
\end{align*}
In view of \eqref{E:SwitchDerivates2}, it follows from \eqref{E:pshnearboundaryiff1b} that on $b\Omega \cap U$
\begin{align*}
2\re\left(\sum_{j,k,\ell=1}^2\frac{\partial^{3}\rho}{\partial \bar{z}^\ell \partial z^j\partial z^k}\bar{L}^\ell L^j N^k\right)
=2\re(N)\Hc{\rho}{L}{L}+\mathcal{O}(\sqrt{\lambda}).
\end{align*}
Moreover, it follows from \eqref{E:pshnearboundaryiff1b} that 
$$
\left|\Hc{\rho}{L}{\nabla_{L}N}\right|=\mathcal{O}(\sqrt{\lambda}) \quad\text{on } b\Omega\cap U.
$$
Furthermore, since $\Hc{\rho}{L}{L}=-(\nabla_{\bar{L}}{L})\rho$ on $b\Omega\cap U$, it follows that the normal component of $\nabla_{\bar{L}}{L}$ is 
$\mathcal{O}(\Hc{\rho}{L}{L})$ on $b\Omega\cap U$. This, together with \eqref{E:pshnearboundaryiff1b} and \eqref{E:pshnearboundaryiff1c}, implies that
\begin{align*}
   \left|\Qc{\rho}{\nabla_{\bar{L}}L}{N}\right| &= \mathcal{O}(\sqrt{\lambda}) \;\;\text{on } b\Omega \cap U, \text{ and} \\
   \left|\Hc{\rho}{\nabla_{\bar{L}}L}{N}\right| &=\mathcal{O}(\sqrt{\lambda}) \;\;\text{on } b\Omega \cap U.
\end{align*}
Therefore we obtain
\begin{align*}
  2\re(N)\Hc{\rho}{L}{L}=-\Qc{\rho}{L}{\nabla_{\bar{L}}N}+\mathcal{O}(\sqrt{\lambda})\;\;\text{on } b\Omega \cap U.
\end{align*}
Since $$\nabla_{\bar{L}}N=2\sum_{j,k=1}^2\bar{L}^k\rho_{\bar{z}^j\bar{z}^k}\frac{\partial}{\partial z^j},$$
it follows easily that $\Qc{\rho}{L}{\nabla_{\bar{L}}N}=|\Qc{\rho}{L}{.\,}|^2$, where $\Qc{\rho}{L}{.\,} \in \Vhol{}{U}$ is defined via the condition $\langle \Qc{\rho}{L}{.\,}, V\rangle \coloneqq \Qc{\rho}{L}{V}$ for all $V \in \Vhol{}{U}$. Hence
\begin{align*}
   \nu\Hc{\rho}{L}{L}=2\re(N)\Hc{\rho}{L}{L}=-|\Qc{\rho}{L}{.\,}|^2+\mathcal{O}(\sqrt{\lambda})\;\;\text{on } b\Omega \cap U,
\end{align*}
i.e., \eqref{E:pshnearboundaryiff2b} holds and the claim follows.
\end{proof}

A global analog of \Cref{P:sesquiconvexpsh} easily follows for sesquiconvex, bounded domains.
\begin{corollary} \label{T:sesquiconvexpshglobal}
  If $\Omega\Subset\mathbb{C}^2$ is sesquiconvex, then $\Omega$ admits a smooth global defining function $\rho:U\longrightarrow\mathbb{R}$
which is plurisubharmonic on $\bar{\Omega}\cap U$.
\end{corollary}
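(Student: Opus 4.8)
The plan is to globalize the proof of \Cref{P:sesquiconvexpsh} essentially verbatim, exploiting the fact that the defining function produced there is a single, canonically normalized object rather than a locally patched one. Let $r \colon U \to \R$ be a smooth global defining function for $\Omega$ on an open neighborhood $U$ of $b\Omega$; after shrinking $U$ we may assume $dr \neq 0$ on $U$, so that $\rho \coloneqq r/\abs{dr}$ is again a smooth global defining function for $\Omega$ on $U$ with $\abs{d\rho} \equiv 1$ on $b\Omega$. Since $\Omega \Subset \C^2$, the boundary $b\Omega$ is compact and $b\Omega \Subset U$.

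First I would check that $\rho$ satisfies
$$
\abs{\Hc{\rho}{L}{N}} = \mathcal{O}(\sqrt{\lambda}) \quad\text{and}\quad \nu\Hc{\rho}{L}{L} \le \mathcal{O}(\sqrt{\lambda}) \quad\text{on } b\Omega,
$$
i.e.\ \eqref{E:pshnearboundaryiff1b} and \eqref{E:pshnearboundaryiff2b}, for a nonvanishing holomorphic tangential vector field $L$, where $\lambda \coloneqq \Hc{\rho}{L}{L}_{|_{b\Omega}}$. The computation establishing these in \Cref{P:sesquiconvexpsh} is entirely pointwise along $b\Omega$: it uses only that $\abs{d\rho} \equiv 1$ on $b\Omega$ --- which, since $L$ and $\bar{L}$ are complex tangential to $b\Omega$, forces $L(\abs{\partial\rho}^2)$ and $\bar{L}L(\abs{\partial\rho}^2)$ to vanish there --- together with the sesquiconvexity estimate \eqref{E:distancepshC}. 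As $\Omega$ is sesquiconvex at \emph{every} boundary point (at strictly pseudoconvex points automatically, by the remark following the definition of sesquiconvexity), the estimate \eqref{E:distancepshC} holds near each $p \in b\Omega$, and by compactness of $b\Omega$ the implied $\mathcal{O}$-constants may be taken uniform, so both displayed bounds hold on all of $b\Omega$.

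It then remains to pass from these two boundary conditions to a global defining function that is plurisubharmonic on $\bar\Omega \cap U$, which is the concluding step of \Cref{P:sesquiconvexpsh} run with the global versions of \Cref{L:basicestimate1} and \Cref{L:pshnearboundaryiff} (valid for bounded $\Omega$ with $U$ a neighborhood of $b\Omega$, as noted in the text). Substituting the Taylor expansion \eqref{E:Taylor} of $\Hc{\rho}{L}{L}$ into $\Hc{\rho}{\xi}{\xi}(q)$ and writing $\xi = aL(q) + bN(q)$, the one-sided bound $\nu\Hc{\rho}{L}{L} \le \mathcal{O}(\sqrt{\lambda})$ --- combined with the sign $\delta_{b\Omega}(q) \le 0$ available for $q \in \bar\Omega$ and the inequality \eqref{E:sclc} --- yields the Range-type estimate \eqref{E:basicestimate1} for $\rho$ on $\bar\Omega \cap V$ for a suitable neighborhood $V$ of $b\Omega$. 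The modification $\rho \mapsto \rho + \rho^2\psi$ with $\psi = K_1 + K_2\abs{z}^2$ from \Cref{L:basicestimate1}, whose constants depend only on the uniform constant in \eqref{E:basicestimate1} and on $\sup_U \abs{z}$, then produces the desired smooth global defining function, plurisubharmonic on $\bar\Omega \cap U$.

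The main obstacle is not a new computation but the verification that the globalization is legitimate: one must confirm that every $\mathcal{O}$-estimate appearing locally in \Cref{P:sesquiconvexpsh} can be rendered uniform over the compact boundary, and that the one-sided passage to \eqref{E:basicestimate1} (which genuinely uses $\delta_{b\Omega} \le 0$, hence only holds on $\bar\Omega$) and the subsequent modification go through with a single choice of $K_1, K_2$. Since \eqref{E:distancepshC} is independent of the defining function, the $\mathcal{O}(\lambda)$-class is well-defined by \Cref{R:Landau}, and $b\Omega$ is compact, this bookkeeping is routine.
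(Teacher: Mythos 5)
Your proposal is correct and follows the paper's intended route: the paper proves \Cref{T:sesquiconvexpshglobal} by observing it ``easily follows'' from \Cref{P:sesquiconvexpsh}, precisely because the defining function there is the canonical global object $\rho = r/\abs{dr}$, the verifications of \eqref{E:pshnearboundaryiff1b} and \eqref{E:pshnearboundaryiff2b} are pointwise along the compact boundary, and the text already records the global validity of \Cref{L:basicestimate1} for bounded domains. Your explicit bookkeeping --- uniformity of the $\mathcal{O}$-constants by compactness, the one-sided use of $\delta_{b\Omega} \le 0$ on $\bar\Omega$, and a single choice of $K_1, K_2$ --- is exactly the content the paper leaves implicit.
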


\begin{remark}
Let $\Omega \subset \C^2$ be a smoothly bounded domain, let $N=N_\rho$ for some smooth local defining function $\rho \colon U \to \R$ for $\Omega$, see \eqref{E:definitionN}, and let $L$ be a nonvanishing holomorphic tangential vector field on $U$. A straightforward computation shows that then
\begin{align*}
 \nabla_{\bar{L}}N = \frac{\overline{\Qc{\rho}{L}{L}}}{2|L|^2}  L + \frac{\overline{\Qc{\rho}{L}{N}}}{2|N|^2}  N \;\;\text{on } b\Omega \cap U,
\end{align*}
and hence, in particular,
\begin{align*}
\Qc{\rho}{L}{\nabla_{\bar{L}}N} = \frac{\abs{\Qc{\rho}{L}{L}}^2}{2|L|^2} + \frac{\abs{\Qc{\rho}{L}{N}}^2}{2|N|^2} \;\;\text{on } b\Omega \cap U.
\end{align*}
Moreover, it is easy to see that if $\Omega$ is $\C$-convex at $p_0 \in b\Omega \cap U$, then $|\Qc{\rho}{L}{L}| = \mathcal{O}(\lambda)$ on $b\Omega$ near $p_0$. In view of \eqref{E:pshnearboundaryiff1c}, it thus follows from the arguments in the proof of \Cref{P:sesquiconvexpsh}, that $\Omega$ admits a plurisubharmonic smooth local defining function near $p_0$ whenever $\Omega$ is both sesquiconvex and $\C$-convex at $p_0$. Similarly, if $\Omega \Subset \C^2$ is sesquiconvex and $\C$-convex, then $\Omega$ admits a plurisubharmonic smooth defining function.
\end{remark}


%
%

\bibliographystyle{acm}
\bibliography{References}

\end{document}